\documentclass[11pt,reqno]{amsart}
\usepackage[T1]{fontenc}
\usepackage[utf8]{inputenc}
\usepackage{amsmath,amssymb,amsthm,mathtools,mathrsfs}
\usepackage{microtype}
\usepackage{graphicx,flafter}
\usepackage[margin=1.15in]{geometry}
\usepackage[colorlinks=true,linkcolor=blue,citecolor=blue,urlcolor=blue]{hyperref}
\hypersetup{pdftitle={The injectivity threshold of a Gaussian ReLU layer},
pdfauthor={Xiaohui Xie}}
\numberwithin{equation}{section}
\newtheorem{theorem}{Theorem}[section]
\newtheorem{proposition}[theorem]{Proposition}
\newtheorem{lemma}[theorem]{Lemma}
\newtheorem{corollary}[theorem]{Corollary}
\theoremstyle{definition}
\newtheorem{definition}[theorem]{Definition}
\theoremstyle{remark}

\newcommand{\R}{\mathbb R}

\newcommand{\E}{\mathbb E}
\newcommand{\Prob}{\mathbb P}
\newcommand{\ind}{\mathbf 1}
\newcommand{\dd}{\mathrm d}
\newcommand{\norm}[1]{\lVert #1\rVert}
\newcommand{\abs}[1]{\lvert #1\rvert}

\DeclareMathOperator{\Var}{Var}

\DeclareMathOperator{\ReLU}{ReLU}

\title{The injectivity threshold of a Gaussian ReLU layer}
\author{Xiaohui Xie}
\address{University of California, Irvine}
\email{xhx@uci.edu}
\date{}
\subjclass[2020]{60K35, 60G15, 82B44, 68T07}
\keywords{Gaussian random matrices, ReLU networks, spherical perceptron,
Parisi formula, Ruelle probability cascades}
\begin{document}
\begin{abstract}
We determine how wide a randomly initialized ReLU layer must be to
distinguish every pair of inputs. For independent Gaussian weights and
zero bias, as the dimensions grow at a fixed output-to-input ratio,
the probability of global injectivity tends to zero below a sharp
threshold and to one above it. We characterize this threshold exactly
through a variational formula for the limiting minimum number of active
neurons per input coordinate. The formula accounts for correlations
among input directions at arbitrarily many levels. We prove this formula
rigorously, establishing a prediction from the statistical physics of the
spherical perceptron. The proof compares systems of nearby dimensions and controls a
Gaussian interpolation by calibrating responses to auxiliary fields.
A uniform volume estimate connects the soft minimum to the worst input
direction. An exact-arithmetic calculation places the
threshold below seven output neurons per input coordinate. Numerical
evaluations suggest a value near $6.698$.
\end{abstract}
\maketitle
\raggedbottom
\section{Introduction}

A neural representation preserves all information about its input only if
distinct inputs have distinct outputs. This property, called
\emph{global injectivity}, is needed to recover signals from learned
representations or latent variables from generative models
\cite{Puthawala2022Globally}. ReLU sets negative preactivations to zero,
and which neurons remain active depends on the input. We ask how wide a
Gaussian ReLU layer must be to distinguish every pair of inputs.

Let $n$ and $m$ be the input and output dimensions, respectively, and let
$W\in\R^{m\times n}$ have independent standard normal $N(0,1)$ entries.
We study the zero-bias map
\[
 f_W:\R^n\longrightarrow\R^m,
 \qquad f_W(x)=\ReLU(Wx/\sqrt n),
\]
where $\ReLU(t)=\max\{t,0\}$ acts coordinatewise. Write
$p_{m,n}=\Prob\{f_W\text{ is injective}\}$ for the probability over the
weights and $\alpha=m/n$ for the output-to-input ratio.
A Gaussian linear map has full column rank almost surely when $m\geq n$.
After applying ReLU, the relevant rows are those whose outputs are
positive. At a fixed nonzero input, each neuron is independently active
with probability $1/2$. Injectivity requires enough active rows in every
direction, including directions selected after observing the weights.

Let $w_i\in\R^n$ be the transpose of the $i$th row of $W$, and let
$S^{n-1}=\{u\in\R^n:\|u\|_2=1\}$ be the Euclidean unit sphere. The
active-neuron count in direction $u$, with $\ind$ denoting an indicator
and $\cdot$ the Euclidean inner product, is
\begin{equation}\label{eq:energy}
 H_W(u)=\sum_{i=1}^m\ind\{w_i\cdot u>0\},\qquad u\in S^{n-1}.
\end{equation}
Almost surely,
\[
 f_W\text{ is injective}\quad\Longleftrightarrow\quad
 \min_{u\in S^{n-1}}H_W(u)\geq n.
\]
To see the idea, fix an open region on which all row signs are constant.
The layer is linear there, and its active rows determine its rank.
Fewer than $n$ active rows leave a direction in which two nearby inputs
have the same output. General position supplies the converse; directions
on row hyperplanes can also be reduced to open regions
(Lemma~\ref{geo:criterion}). The threshold is therefore determined by the
input direction with the fewest active neurons.

\subsection{Main results}
We approach the minimum through an integral version of the soft minimum
used in optimization. Let $\mu_n$ be uniform probability measure on
$S^{n-1}$, and for $\beta>0$ define
\begin{equation}\label{eq:partition}
 Z_{n,m}(\beta)=\int_{S^{n-1}}e^{-\beta H_W(u)}\,\mu_n(\dd u),
 \qquad \Phi_{n,m}(\beta)=\frac1n\E\log Z_{n,m}(\beta).
\end{equation}
Here $\E$ averages over $W$. The quantity $-\log Z_{n,m}(\beta)/\beta$ is
the soft minimum. Increasing $\beta$ puts more weight on directions with
few active neurons. In statistical physics, $\beta$ is the inverse
temperature, $Z_{n,m}$ is the partition function, and $\Phi_{n,m}$ is the
pressure, or normalized expected log partition function. The expectation
of the soft minimum per input coordinate is $-\Phi_{n,m}/\beta$.

Section~\ref{sec:variational} defines an explicit variational function
$P_*(\alpha,\beta)$ through recursive one-dimensional Gaussian integrals.
Its trial parameters describe hierarchies of correlations between input
directions. Optimizing over all finite hierarchies gives the full replica
symmetry breaking variational formula. This function determines both the
limiting pressure and the injectivity threshold.

\begin{theorem}[Injectivity threshold]\label{main:threshold}
For every finite $\alpha\geq0$, the limit
\begin{equation}\label{eq:limiting-energy}
 e(\alpha)=\lim_{\beta\to\infty}\frac{-P_*(\alpha,\beta)}{\beta}
\end{equation}
exists. For every deterministic sequence $m_n/n\to\alpha$,
\begin{equation}\label{eq:ground-energy-limit}
 \frac1n\min_{u\in S^{n-1}}H_W(u)\longrightarrow e(\alpha)
 \quad\text{in probability and in }L^1.
\end{equation}
There is a unique $\alpha_\star\in(0,\infty)$ such that
$e(\alpha_\star)=1$. For every such sequence with $\alpha\ne\alpha_\star$,
\[
 \lim_{n\to\infty}p_{m_n,n}=
 \begin{cases}
 0,&\alpha<\alpha_\star,\\
 1,&\alpha>\alpha_\star.
 \end{cases}
\]
\end{theorem}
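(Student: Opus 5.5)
The proof has three layers: a Parisi formula for the pressure at each fixed $\beta$, a zero-temperature limit that identifies the minimum, and monotonicity of $e(\alpha)$ to locate the threshold.

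\textbf{Step 1: the Parisi formula at fixed $\beta$.} For every $\beta>0$ and every sequence $m_n/n\to\alpha$, we aim to show $\Phi_{n,m_n}(\beta)\to P_*(\alpha,\beta)$.

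The upper bound $\limsup\Phi\le P_*$ comes from Guerra's interpolation with a Ruelle probability cascade. The energy $\beta H_W$ is a sum of bounded functions $\beta\ind\{w_i\cdot u>0\}$ of Gaussian fields. Because the indicator is discontinuous, we first smooth it (e.g. replace it by $\Phi(w_i\cdot u/\epsilon)$), interpolate, and then remove the smoothing; the error is controlled uniformly in $n$ since $e^{-\beta x}$ is Lipschitz on bounded sets. The remainder term has a sign because the log-derivative of the recursive one-dimensional integrals is convex in the overlap, as in Talagrand's treatment of the spherical perceptron.

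The lower bound uses an Aizenman--Sims--Starr cavity comparison of dimensions $n$ and $n+1$ (with $m$ growing correspondingly). We perturb the Hamiltonian with small auxiliary fields so that the Ghirlanda--Guerra identities hold. Panchenko's ultrametricity then lets us write the limiting increment as a functional of an RPC, and that functional is bounded below by the Parisi functional. The analytic inputs needed are concentration of $\frac1n\log Z$ (bounded differences in the rows) and continuity of $P_*$ in $\alpha$.

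\textbf{Step 2: zero temperature.} Since $0\le H_W\le m$, the soft minimum satisfies
\[
\min H\le -\tfrac1\beta\log Z\le \min H-\tfrac1\beta\log\mu_n(A),
\]
where $A$ is a neighbourhood of the minimizer on which $H$ stays within $\delta n$ of its minimum. The needed uniform volume estimate is the following: with high probability, every cap of angular radius $r$ around any $u$ changes $H$ by at most $\delta n$. This follows from a VC/union bound over an $r$-net, using the fact that at most $Cr\,m$ rows have hyperplanes within angle $r$ of a given point. Such a cap has $\mu_n$-mass $e^{-O(n\log(1/r))}$. Hence
\[
\Bigl|\tfrac1n\E\min H+\tfrac{\Phi_{n,m}(\beta)}{\beta}\Bigr|\le \delta+\frac{C\log(1/r)}{\beta},
\]
uniformly in $n$. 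Combining this with Step 1, $-P_*(\alpha,\beta)/\beta$ is Cauchy as $\beta\to\infty$, so $e(\alpha)$ exists and equals $\lim\frac1n\E\min H$. Convergence in probability and in $L^1$ then follows from the concentration of $\min H$, which changes by at most $1$ when a single row is altered.

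\textbf{Step 3: the threshold.} We show that $e$ is continuous, strictly increasing, and satisfies $e(0)=0$ and $e(\alpha)\to\infty$.

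\emph{Monotonicity.} Adding rows can only increase $H$. Moreover $e(\alpha+h)\ge e(\alpha)+ch$: with high probability, a positive fraction of the extra rows are active at the minimizer, which we can argue via a cavity or coupling argument. An alternative is concavity of $e$ in $\alpha$ together with $e>0$.

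\emph{Limits.} For $\alpha<1$, $e(\alpha)\le\alpha/2<1$. For large $\alpha$, the net argument gives $e(\alpha)\ge\alpha(\tfrac12-o(1))\to\infty$.

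The intermediate value theorem then yields a unique $\alpha_\star$. By Lemma~\ref{geo:criterion}, injectivity holds iff $\min H\ge n$. For $\alpha\neq\alpha_\star$ we have $e(\alpha)\neq1$, so convergence in probability gives $p_{m_n,n}\to0$ or $1$ accordingly.

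\textbf{Main obstacle.} I expect the hardest step to be the Parisi lower bound in Step 1. The cavity fields here are non-Gaussian functions of indicator energies, and the number of constraints changes along with the dimension. Matching the cavity computation to $P_*$ requires calibrating the auxiliary perturbation fields so that the synchronization and Ghirlanda--Guerra identities survive. The second point needing care is proving strict monotonicity of $e$ rather than just monotonicity.
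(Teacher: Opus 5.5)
\textbf{Step 3 does not give a unique crossing.} The properties you claim for $e$ are false. By the Cover--Wendel count, $m\le(2-\varepsilon)n$ Gaussian rows admit, with probability tending to one, a direction at which every field is negative. So $e\equiv0$ on $[0,2]$. Hence $e$ is not strictly increasing, and no bound $e(\alpha+h)\ge e(\alpha)+ch$ with fixed $c>0$ holds. Your alternative, ``concavity together with $e>0$'', fails on both counts: $e$ is in fact convex, and it vanishes on an interval. The heuristic that a positive fraction of added rows is active holds only at the \emph{old} minimizer. It therefore yields the upper bound $h(m+s,n)\le h(m,n)+s/2$, not a lower bound, because the new minimizer is chosen after seeing the new rows. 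For continuous nondecreasing $e$ the level set $\{e=1\}$ is a closed interval. Nothing in your argument rules out a nondegenerate interval, which would destroy both the uniqueness of $\alpha_\star$ and the dichotomy.

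The missing idea is star-shapedness. By definition $P_*(\cdot,\beta)$ is an infimum of functions affine in $\alpha$, with slopes $\mathcal J_\beta\in[-\beta/2,0)$ and intercepts $\mathcal E\ge0$ that attain $0$. Hence $-P_*(\cdot,\beta)/\beta$ and its limit $e$ are convex, nondecreasing and $\frac12$-Lipschitz, with $e(0)=0$. So $e(\alpha)/\alpha$ is nondecreasing, giving $e(\alpha)\le\alpha/\alpha_\star<1$ below any root and $e(\alpha)\ge\alpha/\alpha_\star>1$ above it. The correct finite-$n$ form of your ``positive fraction'' idea is the subset-averaging inequality $h(m,n)/m\le h(M,n)/M$ of Lemma~\ref{geo:rows}, which gives the same ratio monotonicity.

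\textbf{The volume estimate in Step 2 is also false as stated.} For fixed unit $u$, $w_i\cdot u\sim N(0,1)$ while $\norm{w_i}\approx\sqrt n$. So $w_i^\perp$ meets the cap of angular radius $r$ about $u$ whenever $\abs{w_i\cdot u}\le\sqrt n\sin r$. That is a fraction of order $\min\{1,\sqrt n\,r\}$ of the rows, not $O(r)$. Pushing it below $\delta$ forces $r\lesssim\delta n^{-1/2}$ and a cap mass $n^{-\Theta(n)}$, so your error term $-\frac1{\beta n}\log\mu_n(A)$ grows like $(\log n)/\beta$.

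In field scale the uniform claim fails too. At every vertex of the arrangement $n-1$ fields vanish, and the minimum of $H_W$ is attained at such vertices (or in adjacent chambers that may be superexponentially small). A cap centered there contains directions at which all those rows turn positive, raising $H$ by order $n$. A union bound over a net cannot exclude these data-dependent centers.

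Proposition~\ref{geo:volume} handles this by recentering. It solves $W_Iv=-\kappa\mathbf 1$ on the nonpositive rows with small fields and moves to $q=(u+v)/\norm{u+v}$. This uses operator-norm and least-singular-value bounds that hold simultaneously for all row subsets of size at most $(1-\eta)n$ and of size $\lceil(1+\eta)n\rceil$. After the move, all but $O(\eta n)$ nonpositive rows have margin of order $\kappa$, and only then does a cap of fixed radius work. This requires $m\ge(1+\eta)n$, which is why densities at most one are handled separately via $e(3/2)=0$ and row comparison.

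Finally, your Step~1 upper bound asserts that Guerra's remainder has a sign by convexity. Here the interpolation derivative is $-\frac12\E\langle(R_{12}-Q_{12})C_{12}\rangle$, with a score overlap $C\ge0$ and $R_{12}-Q_{12}$ of either sign. Obtaining the sign (joint Ghirlanda--Guerra identities, synchronization, response calibration) is the main work of the paper. For this theorem you should simply cite Theorem~\ref{main:pressure}.
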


The function $e(\alpha)$ is the limiting minimum number of active neurons
per input coordinate. Injectivity changes phase when this quantity
crosses one.
The $L^1$ convergence in \eqref{eq:ground-energy-limit} means convergence
in expected absolute error. The theorem leaves the behavior exactly at
$\alpha=\alpha_\star$ open.

Elementary geometric bounds and an explicit variational trial give
$4\leq\alpha_\star<7$. The upper bound is certified by exact arithmetic
in Section~\ref{sec:numerics}. Numerical evaluations with increasingly
many correlation levels give values near $6.698$, consistent with the
prediction of \cite{Maillard2025Injectivity}; these decimals are not a
certified enclosure. Figure~\ref{fig:threshold-overview} summarizes the
phase transition and the status of these bounds.

\begin{figure}[htbp]
 \centering
 \includegraphics[width=\textwidth]{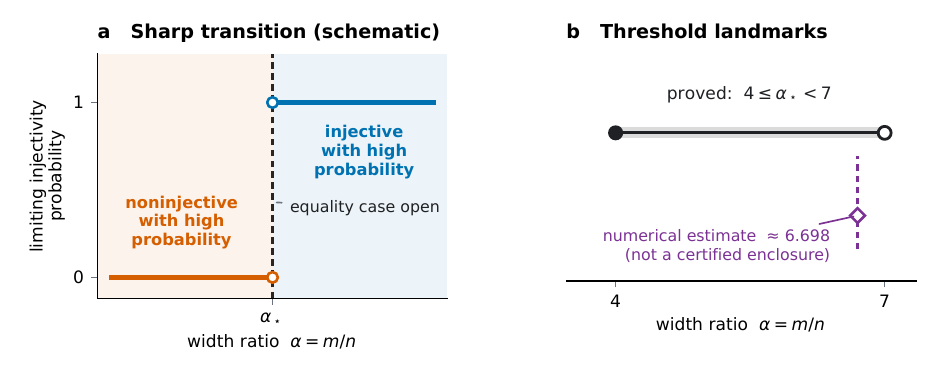}
 \caption{The Gaussian ReLU injectivity threshold. The phase transition
 is schematic: for each fixed ratio below $\alpha_\star$, the probability
 of global injectivity tends to zero; above it, the probability tends to
 one. The exact threshold is characterized by $e(\alpha_\star)=1$.
 The bounds $4\leq\alpha_\star<7$ are rigorous, while the location near
 $6.698$ comes from numerical variational trials. The figure specifies
 neither the probability at equality nor a finite-dimensional transition
 curve.}
 \label{fig:threshold-overview}
\end{figure}

These widths concern independent Gaussian weights. Structured weights can
achieve injectivity with only $2n$ outputs: the map
$x\mapsto(\ReLU(x),\ReLU(-x))$ recovers $x$ by subtracting its two output
blocks. Such constructions and their stability are studied in
\cite{Puthawala2022Globally}. The Gaussian threshold quantifies the extra
width needed when the rows are chosen independently at random.

\begin{samepage}
\begin{theorem}[Limiting pressure]\label{main:pressure}
For every $\alpha\in[0,\infty)$, every $\beta>0$, and every deterministic
sequence of nonnegative integers $m_n$ with $m_n/n\to\alpha$,
\begin{equation}\label{eq:pressure-limit}
 \lim_{n\to\infty}\Phi_{n,m_n}(\beta)=P_*(\alpha,\beta).
\end{equation}
For $P_n\sim\operatorname{Pois}(\alpha n)$ independent of the Gaussian rows,
\[
 \lim_{n\to\infty}\frac1n\E\log Z_{n,P_n}(\beta)=P_*(\alpha,\beta),
\]
where the expectation includes both $P_n$ and the Gaussian rows.
\end{theorem}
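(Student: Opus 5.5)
We prove Theorem~\ref{main:pressure} by matching upper and lower bounds, working first with the Poisson model. Independence across rows gives clean cavity computations there, and we transfer to deterministic $m_n$ at the end. Since $e^{-\beta H_W}$ is a product over rows of $e^{-\beta\ind\{w_i\cdot u>0\}}$, the model is a spherical perceptron with bounded activation $\theta(t)=-\beta\ind\{t>0\}$. The per-row factor lies in $[e^{-\beta},1]$, so adding or removing one row changes $\log Z$ by at most $\beta$. Hence $\Phi_{n,m}$ is $\beta/n$-Lipschitz in $m$. Poisson concentration then shows that the Poisson and deterministic versions differ by $O(\beta\sqrt{\alpha/n})$, and that $|m_n-\alpha n|=o(n)$ changes nothing in the limit. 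This settles the reduction, and the second display follows from the first.

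\emph{Upper bound (Guerra interpolation).} For a finite hierarchy of trial parameters $0=q_0<q_1<\dots<q_k<1$ with weights $0<\zeta_1<\dots<\zeta_{k}<1$, we build the interpolating Hamiltonian
\[
H_t(u)=-\sum_{i\le P_n}\theta\bigl(\sqrt t\,w_i\cdot u+\sqrt{1-t}\,Y_i(\alpha)\bigr)+\sqrt{1-t}\,\textstyle\sum_j g_j(\alpha)u_j+\dots
\]
The auxiliary fields $Y_i$ and $g_j$ are Gaussian processes indexed by a Ruelle probability cascade whose covariances are prescribed by $(q_\ell,\zeta_\ell)$. The Lagrange/Gaussian term enforcing $\norm{u}=1$ is handled through a spherical constraint, as in the Talagrand treatment of the spherical SK model. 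We differentiate in $t$ and apply Gaussian integration by parts. Because $\theta$ is not smooth, we first regularize the indicator by a smooth $\theta_\epsilon$, prove the bound uniformly in $\epsilon$, and then let $\epsilon\to0$, which is legitimate since $\theta_\epsilon$ is bounded.

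The derivative has a sign only if the response of the auxiliary field matches the Hessian of the log-partition term. This is the ``calibration of responses'' referred to in the abstract. We expect that, at the endpoint, the recursive one-dimensional Gaussian integrals defining $P_*$ appear exactly. We also expect a remainder of the form $\E\langle F(R_{12})\rangle$ with $F(q)\ge0$ wherever it is not controlled. This is the step where non-convexity of the perceptron term is usually problematic. Our plan is to use the Poisson number of rows: the derivative becomes an expectation over one added row, as in Panchenko's diluted/perceptron cavity arguments, so positivity reduces to a convexity statement in the overlap for a bounded function.

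\emph{Lower bound (Aizenman--Sims--Starr scheme).} We write $\Phi_{n}$ as an average of the increments $\E\log Z_{n+1,P_{n+1}}-\E\log Z_{n,P_n}$; this is the comparison of systems of nearby dimensions. The Poisson rows split into rows shared by both systems and $\mathrm{Pois}(\alpha)$ extra rows. The increment is then expressed through the Gibbs measure of the $n$-system: it consists of the cavity field for the new coordinate and the effect of the additional rows, with $\norm{u}=1$ handled by radial rescaling. We perturb the Hamiltonian by small mixed $p$-spin terms so that the Ghirlanda--Guerra identities hold. Panchenko's ultrametricity theorem then lets us approximate the overlap distribution by finitely many RPC levels. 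This gives $\liminf\Phi_n\ge\inf P_*$, the infimum over finite hierarchies, matching the upper bound.

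\emph{Main obstacle.} The hardest part is the positivity of the interpolation remainder with a discontinuous, non-convex activation. The first attempt is the smoothing plus Poisson-cavity route described above. If that fails, the fallback is Talagrand's approach of bounding the remainder via concavity of the perceptron cavity functional in $q$ for bounded $\theta$, combined with a Lipschitz estimate for $\log Z$ in the trial parameters.
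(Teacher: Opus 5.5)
Your Poisson-to-deterministic reduction is correct and is the paper's coupling \eqref{lo:poissontransfer}. The upper bound, however, has a genuine gap at exactly the step you flag as the main obstacle. Differentiating your interpolation does not give a remainder of the form $\E\langle F(R_{12})\rangle$. The row part is $-\frac12\E\langle(R_{12}-Q_{12})C_{12}\rangle$, where $C_{12}=n^{-1}\sum_i\chi_t(X_i^1)\chi_t(X_i^2)$ is the overlap of the two replicas' scores (derivatives of the smoothed one-row log weight). The auxiliary spin field contributes only a term $\frac12\E\langle R_{12}\,a(Q_{12})\rangle$, up to constants. Since $C_{12}$ is not a function of $(R_{12},Q_{12})$, no choice of auxiliary covariance gives a pointwise sign. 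This is unlike SK, where the analogue of $C$ is $\xi'(R)$ and convexity of $\xi$ closes the argument. Your Poisson route would need a tangent inequality for the multi-replica orthant functionals $\E\prod_{\ell\leq r}\ind\{X^\ell>0\}$ in the whole $r\times r$ covariance, matched by a spin field that sees only pairwise $Q$. You neither formulate nor prove such an inequality, and the Talagrand fallback is equally unspecified.

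The paper closes this step with three ingredients. First, a compensated perturbation by kernels $(\vartheta_RR+\vartheta_C\widetilde C+\vartheta_QQ)^p$ gives joint Ghirlanda--Guerra identities for $(R,\widetilde C,Q)$ in almost every component of the coefficient mixture. Second, synchronization orders the support coordinatewise, which yields $\operatorname{Cov}_\nu(R,C\mid Q=q_i)\geq0$ and nondecreasing conditional score means in each component. Third, response calibration chooses the spin covariance increments to maximize the coefficient-averaged pressure with a logarithmic barrier. Convexity in $\sqrt{d_i}$, together with a secant estimate exploiting that coefficients move the pressure by only $O(s_n^2/n)$, forces the conditional spin means to agree across coefficients. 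This removes the between-component covariance, stationarity gives $\nabla_a\overline V_n\cdot c_n\leq o_n(1)$, and an envelope argument integrates $\partial_t\overline V_n\leq o_n(1)$. Separately, smoothing the indicator is not justified by boundedness alone: $\theta-\theta_\epsilon$ is of order $\beta$ near zero, so you need the row-by-row replacement against the row-deleted Gibbs measure, as in \eqref{up:stependpoint}.

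Your lower bound also has a gap. Adding one coordinate moves all $\alpha n$ shared row fields by $O(n^{-1/2})$. With an indicator there is no expansion, and even after smoothing the cavity field on the new coordinate has conditional covariance given by the score overlap $C$, not by $R$. The increment is then a functional of the joint law of $(R,C)$, and approximating ``the overlap distribution'' by cascades does not turn it into $\mathcal E(q,x)+\alpha\mathcal J_\beta(q,x)$.

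The paper avoids this with a projection cavity. It restricts the $(n+M)$-dimensional Gibbs measure with $\operatorname{Pois}(\alpha n)$ rows to a Haar random $n$-dimensional subspace, under which Gaussian rows remain exactly Gaussian. The increment splits into two parts. One is a reverse relative entropy, written through the projected density at zero and hence through $R$ alone. The other is a fresh-row term. On a finite cascade their sum equals $M\mathcal P_{\alpha,\beta}(q,x)$ exactly. The argument keeps the limiting law as a mixture of Ghirlanda--Guerra components and uses affinity of the cavity functional, since the identities need not hold for the averaged law.
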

\end{samepage}

The formula holds at every finite width ratio and every $\beta>0$.
To recover the true minimum from the soft minimum, we must also show that
directions with nearly minimal counts occupy enough of the sphere to
contribute to the integral, uniformly in $n$. Section~\ref{sec:strategy}
explains this step and the two comparisons that prove the pressure formula.

\subsection{Related work}
\emph{Geometry of ReLU layers.}
Puthawala et al.~\cite{Puthawala2022Globally} characterized injectivity
through the spanning properties of active rows, gave bounds for Gaussian
layers, and studied the stability of their inverses. Their work also
covers structured fully connected and convolutional layers and networks.
Our result identifies the asymptotic width threshold in the single-layer,
independent Gaussian model. The classical hyperplane and separation
formulas of Cover~\cite{Cover1965Geometrical} and
Wendel~\cite{Wendel1962Problem} provide useful sign-pattern bounds, but
identifying the minimum active count requires more than counting feasible
patterns.

\emph{The perceptron formulation.}
Maillard, Bandeira, Belius, Dokmani\'c and Nakajima
\cite{Maillard2025Injectivity} formulated the Gaussian problem as a
spherical perceptron: its parameter is a unit vector, and its energy counts
violated Gaussian linear inequalities. They proposed the full replica
symmetry breaking formula used here, with a predicted threshold near
$6.698$. They proved
noninjectivity when the limiting
zero-temperature energy is below one \cite[Corollary 1.5]{Maillard2025Injectivity}
and a replica-symmetric sufficient bound near $7.6477$
\cite[Theorem 1.8]{Maillard2025Injectivity}.

Theorem~\ref{main:pressure} proves their Conjecture 1.7(i), including
existence of the dimension limit at every finite density and positive
temperature. Theorem~\ref{main:threshold} proves the ground-energy identity
in their equation (10) and both implications of Conjecture 1.6. Together
these establish the variational sharp-threshold characterization underlying
their Result 1.1. The numerical interval in that result remains
uncertified here. Their Conjecture 1.7(ii), which identifies a minimizing
profile with the overlap distribution of the unperturbed Gibbs measure,
is a separate assertion and is not proved in this paper.

\emph{Lifted Gaussian comparisons.}
Stojnic~\cite{Stojnic2024Injectivity} treats the injectivity obstruction
as a Gaussian min--max feasibility problem. His Tables 2--4 report
$7.4486$ at the partial second level and $6.7157$, $6.7004$ at the full
second and third levels. Partial lifting has a one-sided comparison
interpretation, giving sufficient conditions for injectivity
\cite[Corollary 4]{Stojnic2023Duality}. The full-lifting equalities in
\cite[Theorems 1--2]{Stojnic2024Injectivity} use the stationarized
framework of \cite[Theorem 3]{Stojnic2023Stationarization}. That framework
assumes stationary parameter paths together with overlap concentration
along the interpolation, or vanishing interpolation remainders.
Endpoint stationary equations alone do not establish these hypotheses.

Our proof concerns the positive-row-count partition function
\eqref{eq:partition}; the auxiliary bilinear partition in
\cite[equations (14)--(18)]{Stojnic2024Injectivity} is a different
finite-temperature object. For the row-count model, we establish the
needed interpolation sign from joint overlap identities and response calibration at deterministic
covariance maxima, without assuming a stationary parameter path.

\emph{Cavity and interpolation methods.}
The dimension comparison is related to the cavity variational principle
of Aizenman, Sims and Starr~\cite{Aizenman2003Extended}, while the upper
bound follows the Gaussian interpolation approach of
Guerra~\cite{Guerra2003Broken}. Probability cascades encode the trial
hierarchies \cite{Arguin2007RPC}; the representation and ultrametricity
results for overlap arrays \cite{Panchenko2014SK,Panchenko2013Ultrametricity}
identify the limiting objects used in the comparison. The row cost jumps
whenever a row field crosses zero, and this discontinuity requires additional arguments. Restriction to a random
subspace gives the projection cavity comparison between dimensions.
Response calibration controls the error in the upper-bound interpolation.
A uniform volume estimate then connects the soft minimum to the fewest
active neurons in any direction.
Section~\ref{sec:strategy} explains these steps, with their full proofs
in the appendices.

\subsection{Organization and notation}
Section~\ref{sec:variational} defines the variational formula,
Section~\ref{sec:strategy} explains the proof, and
Section~\ref{sec:numerics} gives a certified trial bound and numerical
refinements. Complete proofs appear in the appendices: geometry in Appendix~\ref{sec:geometry},
probability cascades in Appendix~\ref{sec:cascades}, the pressure bounds
in Appendices~\ref{sec:lower} and~\ref{up:section}, the technical
upper-bound estimates in Appendices~\ref{ue:section} and~\ref{ud:section},
and the full variational and zero-temperature limits in
Appendix~\ref{sec:completion}. Appendix~\ref{app:trials} proves the
finite-family bounds used in Section~\ref{sec:numerics}.

We use the Euclidean inner product and norm, written $u\cdot v$ and
$\|u\|$, and natural logarithms. All spherical measures are probability
measures. A \emph{Gibbs measure} is the probability measure obtained by
normalizing an exponential weight such as $e^{-\beta H_W}$.
Gibbs brackets $\langle\cdot\rangle$ denote integration over independent
samples, called replicas, from the measure under consideration; $\E$
averages the remaining randomness. An \emph{overlap} is the inner product
of two unit directions, or the corresponding normalized inner product
when the directions are rescaled. Gaussian increments are independent
unless their covariance is specified. Constants denoted by $C$ may change
between occurrences. The notation $o_n(1)$ denotes a quantity tending to
zero as $n\to\infty$ with the other stated parameters fixed. Dimension
limits are taken at fixed temperature before the zero-temperature limit.

\section{The variational formula}\label{sec:variational}

Two unit input directions have overlap equal to their inner product.
Nearby directions share many row signs, so their active counts are
correlated. A finite trial models these correlations by a hierarchy of
overlap levels. The levels $q_i$ specify possible inner products, and
cumulative masses $x_i$ specify their probabilities.
Choose an integer $k\geq0$ and lists
$q=(q_0,\ldots,q_k)$ and $x=(x_1,\ldots,x_k)$ satisfying
\[
 0\leq q_0<\cdots<q_k<1,\qquad
 0<x_1<\cdots<x_k<1,
\]
where the second list is empty if $k=0$. Put $x_0=0$ and $x_{k+1}=1$.
The probability mass at level $q_i$ is $\pi_i=x_{i+1}-x_i$.
Define the residual variance $v$ and the positive quantities $\lambda_i$ by
\begin{equation}\label{eq:lambdas}
 v=1-q_k,\qquad
 \lambda_i=v+\sum_{j=i+1}^k x_j(q_j-q_{j-1}),\quad 0\leq i\leq k.
\end{equation}
The variance $v$ is the part of a fresh Gaussian row that remains
independent after the common hierarchical fields have been fixed.
The quantities $\lambda_i$ enter the entropy term below.
Starting from the one-row log weight, average successively over the
Gaussian increments. For a scalar field $h\in\R$ and a standard normal variable $G$, define
\begin{align}
 f_k(h)&=\log\E\exp\{-\beta\ind\{h+\sqrt vG>0\}\},\label{eq:terminal}\\
 f_{i-1}(h)&=\frac1{x_i}\log\E\exp\{x_i f_i(h+\sqrt{q_i-q_{i-1}}G)\},
       \quad i=k,\ldots,1.\label{eq:recursion}
\end{align}
The expectations in successive applications of the recursion use independent
normal variables. The row term $\mathcal J_\beta$, also called the pattern
term, measures the
contribution of one random row. The entropy term $\mathcal E$ is the
geometric cost of the trial profile. Set
\begin{align}
 \mathcal J_\beta(q,x)&=\E f_0(\sqrt{q_0}G),\label{eq:pattern}\\
 \mathcal E(q,x)&=\frac12\log v+\frac{q_0}{2\lambda_0}
  +\frac12\sum_{i=1}^k\frac1{x_i}
       \log\frac{\lambda_{i-1}}{\lambda_i},\label{eq:entropy}\\
 \mathcal P_{\alpha,\beta}(q,x)&=\mathcal E(q,x)+\alpha\mathcal J_\beta(q,x),
 \qquad P_*(\alpha,\beta)=\inf_{k,q,x}\mathcal P_{\alpha,\beta}(q,x).
 \label{eq:finite-functional}
\end{align}
The infimum is over all finite profiles satisfying the displayed conditions.
The terminal convolution is retained at every depth, including $k=0$.
In the cascade interpretation, it represents independent residual noise
even when two samples choose the same leaf. Appendix~\ref{sec:cascades}
gives that construction; the recursion above suffices to evaluate a trial.

For example, the profile $k=0$, $q_0=0$ has $\mathcal E=0$ and
\[
 \mathcal P_{\alpha,\beta}
 =\alpha\log\frac{1+e^{-\beta}}2.
\]
General profiles introduce correlated Gaussian fields before the final
one-row average. Theorem~\ref{main:pressure} asserts that optimization
over all their depths and parameters gives the exact limiting pressure.
No fixed number of levels is assumed to suffice.

The finite-profile expression agrees with
\cite[equations (47)--(49)]{Maillard2025Injectivity}. To describe its full
domain, associate the probability measure
$\zeta=\sum_{i=0}^k\pi_i\delta_{q_i}$ to a finite profile, where
$\delta_q$ is a unit point mass at $q$.
The row functional extends continuously to all probability measures on
$[0,1]$, while the entropy extends as a nonnegative lower semicontinuous
functional, possibly taking the value $+\infty$. Theorem~\ref{full:completion}
proves that, writing $\mathcal P([0,1])$ for the set of probability
measures on $[0,1]$,
\[
 P_*(\alpha,\beta)
 =\min_{\zeta\in\mathcal P([0,1])}
      \{\mathcal E(\zeta)+\alpha\mathcal J_\beta(\zeta)\}.
\]
Every minimizing measure has finite entropy and no atom at one, although
its support may approach one. Appendix~\ref{sec:completion} gives the
precise extension and its relation to the Parisi equation. This completion
is what permits the optimization over the entire hierarchy in the main
theorems.

\section{Proof of the threshold: main ideas}\label{sec:strategy}

The partition function averages over all directions, whereas injectivity
is decided by the worst one. We first identify the limit of the soft
minimum per input dimension,
then control its approach to the normalized minimum. Projection cavity
gives the lower pressure bound. Gaussian interpolation and response
calibration give the matching upper bound. A uniform estimate on the
volume of nearly minimizing directions completes the passage to the
injectivity threshold. The appendices contain the estimates and limiting
arguments.

\subsection{From injectivity to a minimum row count}
For a Gaussian matrix, every set of at most $n$ rows is linearly
independent almost surely. In a region where all row signs are fixed, the
ReLU map is linear, with the positive rows as its active rows. Fewer than
$n$ positive rows leave a direction along which the output does not
change. Conversely, if every direction has at least $n$ positive rows,
equality of two outputs forces equality of the inputs. Directions on row
hyperplanes require some care: their zero fields can all be perturbed to
negative fields without changing the other signs. Thus they introduce no
exception to the count criterion (Lemma~\ref{geo:criterion}).

The normalized minimum is also concentrated. Replacing a row changes
$\min H_W$ by at most one. For $m\geq1$ and $t>0$, Lemma~\ref{geo:rows} gives
\[
 \Prob\left\{\left|\frac{\min H_W}{n}
       -\frac{\E\min H_W}{n}\right|\geq t\right\}
\leq 2e^{-2n^2t^2/m}.
\]
For $m=0$, the minimum is identically zero.
Consequently, once the mean minimum has a limit different from one, its
position relative to one determines the injectivity phase. Establishing
and identifying that limit is the main problem.

\subsection{Projection cavity and the lower pressure bound}
The cavity method compares systems of nearby dimensions. Here the useful
comparison comes from restricting a measure on $S^{n+M-1}$ to a random
$n$-dimensional subspace, with $M$ fixed. Gaussian rows remain Gaussian
under this restriction. We add an isotropic Gaussian perturbation whose
contribution to the normalized pressure vanishes. The perturbation has
the same restriction property, so the comparison
preserves the joint law of the smaller system. It enforces identities on
the correlations between sampled directions at a vanishing pressure cost,
although it may change the Gibbs measure.

Haar averaging, meaning averaging over uniformly random orientations of
the subspace, recovers the full partition function from its restrictions.
The difference of their expected logarithms is a nonnegative reverse
relative entropy. Denote this restriction cost by $D_{n,M}$ and the
perturbed expected log partition in dimension $n$ by $a_n$, using a
Poisson number of rows with mean $\alpha n$. Let $\nu_n$ be the base
Gibbs measure on the larger sphere, and let $\Psi_{\alpha M}(\nu_n)$
be the expected log cost of adding independent fresh rows with Poisson
mean $\alpha M$. Appendix~\ref{sec:lower} gives the dimension-increment
identity
\[
 a_{n+M}-a_n=D_{n,M}+\Psi_{\alpha M}(\nu_n)+o_n(1).
\]
Both terms on the right use the same base measure. The error tends to
zero as $n\to\infty$ at fixed $M$.

A random subspace is the kernel of a Gaussian projection. Restricting to
that subspace therefore amounts to evaluating the projected Gibbs density
at zero, relative to the projected density of uniform spherical measure.
The resulting entropy and fresh-row terms form the projection cavity
functional, which can be expressed through overlaps. Its passage to a
limit requires control of the logarithm of that density. An overlap bound strictly below one leaves a positive
residual Gaussian variance; together with a weighted tail estimate, it
provides both the smoothing bounds and logarithmic integrability needed
for continuity (Lemmas~\ref{lo:cavitylimit} and~\ref{lo:continuity}).

To identify the limiting functional, sample directions independently
from the Gibbs measure and record their inner products. These samples
are usually called replicas, and their inner products are the overlaps.
The perturbation enforces the Ghirlanda--Guerra identities, which constrain
the joint law of these overlaps. Their representation and ultrametricity
consequences allow approximation by probability cascades, the hierarchical
measures whose Gaussian averages produce the recursion in
Section~\ref{sec:variational}.

Averaging the perturbation coefficients can produce a mixture of overlap
laws. We keep those laws as separate components, because the
Ghirlanda--Guerra identities are nonlinear in the law. The cavity
functional, in contrast, is affine under mixtures. Its value on a finite
cascade is exactly $M\mathcal P_{\alpha,\beta}(q,x)$. Continuity and finite
cascade approximation therefore bound each limiting dimension increment
below by $MP_*(\alpha,\beta)$. Telescoping the increments and removing the
perturbation proves
\[
 \liminf_n\Phi_{n,m_n}(\beta)\geq P_*(\alpha,\beta).
\]
This argument establishes the lower bound before a dimension limit is
known to exist.

\subsection{Response calibration and the upper pressure bound}
For the upper bound, fix a finite trial profile and interpolate between
the original row model and a hierarchical Gaussian model. The latter can
be evaluated by the recursion. The difficulty is to control the sign of
the interpolation error.

Three overlaps enter that error: the spin overlap $R$, the normalized
inner product of two input directions; the hierarchical overlap $Q$;
and a score overlap $C$. The score of a row is the derivative
of its smoothed log weight, so $C$ measures how similarly two replicas
respond to changes in their row fields. A compensated Gaussian
perturbation enforces joint overlap identities. Synchronization then
orders the spin and score coordinates together. Within each limiting
component law, the covariance satisfies
\[
 \operatorname{Cov}(R,C\mid Q=q_i)\geq0.
\]
When the component laws are mixed, their within-component covariances
are only part of the total covariance. A second term measures the
association between the component means, and can have either sign.
Controlling that term is the purpose of response calibration.

We introduce an auxiliary spin-field covariance profile and maximize the
coefficient-averaged pressure with its covariance correction. A logarithmic
barrier keeps the
covariance increments positive. These maximizing profiles are
deterministic, since the disorder and coefficient averages are taken
before optimization.

At a maximizer, the pressure is convex in the square roots of the
covariance increments and has a smooth upper supporting function.
Changing the perturbation coefficients changes its value by at most
$\varepsilon_n=O(s_n^2/n)$, where $s_n=n^\gamma$ and
$1/4<\gamma<1/2$. A small change in the value of a function does not generally
control its derivatives. Here convexity and the smooth support at a maximum provide
the missing control: secants over a distance $h$ give derivative error
$O(h+\varepsilon_n/h)$. Choosing $h$ of order
$\sqrt{\varepsilon_n}$ gives the required vanishing response error. Gaussian
differentiation identifies these derivatives with conditional spin
means. Thus those means agree across coefficient choices in the limit,
at each fixed positive barrier and with interpolation time bounded away
from one (Lemma~\ref{up:maximizers}).

The covariance of the component means consequently vanishes. The
within-component sign survives averaging and gives the required bound
for the optimized interpolation. The envelope argument integrates this
bound without choosing a differentiable maximizing path. The spherical
endpoint estimate is proved in Appendix~\ref{ue:section};
Appendix~\ref{ud:section} treats differentiation of the row-dependent
perturbation. Together they yield, for every finite trial,
\[
 \limsup_n\Phi_{n,m_n}(\beta)
       \leq\mathcal P_{\alpha,\beta}(q,x).
\]
Taking the infimum and combining with the cavity lower bound proves
Theorem~\ref{main:pressure}.

\subsection{Uniform volume and the energy-one crossing}
At a fixed dimension, increasing $\beta$ makes the soft minimum favor
directions minimizing $H_W$. To use the pressure limit, however,
we need a comparison that is uniform in the dimension. Positivity of the
volume of a minimizing chamber does not suffice: that volume could be
superexponentially small in $n$.

Proposition~\ref{geo:volume} gives the needed quantitative estimate.
For each $\delta>0$ and densities in a fixed compact subinterval of
$(1,\infty)$, there is a constant $C_\delta<\infty$, independent of $n$,
such that, with probability tending to one,
\[
 \mu_n\{u:H_W(u)\leq\min H_W+\delta n\}\geq e^{-C_\delta n}.
\]
The proof uses singular-value bounds that hold simultaneously for all
relevant row subsets. They allow a small displacement of any chosen
direction so that all but $\delta n$ of its initially nonpositive rows
acquire a negative margin. A spherical cap then stays within the desired
sublevel set. Since the matrix estimates are uniform, the chosen direction
may be a minimizer selected after observing $W$.

Integrating the Gibbs weight over this cap gives
\[
 0\leq-\frac{\Phi_{n,m}(\beta)}\beta
        -\frac{\E\min H_W}{n}
 \leq\delta+\frac{C_\delta}\beta+o_n(1).
\]
We first take $n\to\infty$, then $\beta\to\infty$, and finally
$\delta\downarrow0$. The pressure formula now identifies the limiting
minimum with $e(\alpha)$. Elementary separability and row comparison
cover the remaining small densities.

It remains to show that the level one is crossed at a unique density.
The variational formula makes $e$ convex, nondecreasing and
$1/2$-Lipschitz, with $e(0)=0$. Elementary bounds give $e(4)\leq1$ and
$e(16)\geq1$, so a crossing exists. Convexity gives the strict inequalities
\[
 e(\alpha)\leq\frac{\alpha}{\alpha_\star}<1
       \quad(\alpha<\alpha_\star),\qquad
 e(\alpha)\geq\frac{\alpha}{\alpha_\star}>1
       \quad(\alpha>\alpha_\star).
\]
Concentration and the geometric criterion then give both injectivity
phases. Appendix~\ref{sec:completion} carries out these limits for every
deterministic aspect-ratio sequence and proves convergence also in $L^1$.

\section{Finite-hierarchy trials}\label{sec:numerics}

The Gaussian recursion provides finite-dimensional trials for the
variational problem. Their limiting values give lower bounds on the ground
energy and hence
upper bounds on the injectivity threshold. We give a rational certificate
for one trial and then evaluate progressively deeper hierarchies.

For a fixed $\chi>0$ and fixed rescaled masses $m_i>0$, the families
scale the last residual variance as $\chi/\beta$ and the cumulative
masses as $m_i/\beta$. In the limit $\beta\to\infty$, the
one-row cost becomes the clipped quadratic $\psi_\chi$ below.
The parameters in the following formulas describe this limiting family;
at finite $\beta$, the terminal overlap is $1-\chi/\beta<1$.
Choose an integer $k\geq0$, overlap levels
$0\leq q_0<\cdots<q_k=1$ and masses $0<m_1<\cdots<m_k$.
With $h_+=\max\{h,0\}$ denoting the positive part of $h$, put
\[
 \ell_i=\chi+\sum_{j=i+1}^k m_j(q_j-q_{j-1}),\qquad
 \psi_\chi(h)=\min\{h_+^2/(2\chi),1\}.
\]
For $k=0$ these conventions mean $q_0=1$ and $\ell_0=\chi$.
With $G$ standard normal and independent at each step, define the bounded functions
\begin{align*}
 g_k(h)&=-\psi_\chi(h),\\
 g_{i-1}(h)&=\frac1{m_i}\log\E\exp\{
       m_i g_i(h+\sqrt{q_i-q_{i-1}}G)\},\qquad i=k,\ldots,1,
\end{align*}
and set
\begin{align}
 E_0&=\frac{q_0}{2\ell_0}
    +\frac12\sum_{i=1}^k\frac1{m_i}\log\frac{\ell_{i-1}}{\ell_i},
    \label{num:entropy}\\
 J_0&=\E g_0(\sqrt{q_0}G),\qquad
 B_\alpha=-E_0-\alpha J_0.\label{num:trial}
\end{align}
Here $E_0$ and $J_0$ are the limiting entropy and row terms, respectively,
and $B_\alpha$ is the resulting energy bound. Write
$q=(q_0,\ldots,q_k)$ and $\boldsymbol m=(m_1,\ldots,m_k)$.

\begin{proposition}\label{num:family}
For every such finite family and every finite $\alpha\geq0$,
$B_\alpha\leq e(\alpha)$. In particular, its crossing
\begin{equation}\label{num:crossing}
 C(q,\boldsymbol m,\chi)=\frac{1+E_0}{-J_0}
\end{equation}
satisfies $\alpha_\star\leq C(q,\boldsymbol m,\chi)$.
\end{proposition}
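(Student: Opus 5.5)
The plan is to exhibit, for each $\beta$, a finite profile $(q^\beta,x^\beta)$ in the sense of Section~\ref{sec:variational} whose value satisfies
\[
\limsup_{\beta\to\infty}\frac{\mathcal P_{\alpha,\beta}(q^\beta,x^\beta)}{\beta}\le E_0+\alpha J_0 .
\]
Since $P_*\le\mathcal P_{\alpha,\beta}(q^\beta,x^\beta)$, the definition \eqref{eq:limiting-energy} of $e(\alpha)$ (whose existence is part of Theorem~\ref{main:threshold}) then gives $e(\alpha)\ge -E_0-\alpha J_0=B_\alpha$. The profile follows the scaling described before the proposition. Replace the top level $q_k=1$ by $q_k^\beta=1-\chi/\beta$, so $v=\chi/\beta$. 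Keep $q_0,\dots,q_{k-1}$ fixed; they stay strictly below $q_k^\beta$ once $\beta$ is large. Set $x_i^\beta=m_i/\beta$, which lies in $(0,1)$ and is increasing for large $\beta$. For $k=0$ take $q_0^\beta=1-\chi/\beta$ with no masses.

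\emph{Entropy term.} From \eqref{eq:lambdas}, $\lambda_i^\beta=\ell_i^\beta/\beta$, where $\ell_i^\beta\to\ell_i$ (the last increment uses $q_k^\beta-q_{k-1}\to q_k-q_{k-1}$). Substituting into \eqref{eq:entropy} gives
\[
\mathcal E=\tfrac12\log(\chi/\beta)+\beta\frac{q_0^{(\beta)}}{2\ell^\beta_0}+\beta\cdot\frac12\sum_i\frac1{m_i}\log\frac{\ell^\beta_{i-1}}{\ell^\beta_i}.
\]
Since $\frac12\log(\chi/\beta)\le0$ for large $\beta$, this yields $\limsup\mathcal E/\beta\le E_0$. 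The sign of the logarithmic term is favorable, which is what we need.

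\emph{Row term.} Write $f_i=\beta\tilde g_i$. The terminal function satisfies
\[
f_k(h)=\log\bigl(1-(1-e^{-\beta})\Phi(h\sqrt{\beta/\chi})\bigr).
\]
The key claim is $\tilde g_k=f_k/\beta\to g_k=-\psi_\chi$ pointwise, with a uniform bound $-1\le \tilde g_k\le0$. Pointwise convergence holds by a Laplace/large-deviation evaluation. For $h\le0$, $\Phi(h\sqrt{\beta/\chi})$ stays bounded away from $1$, so $f_k=O(1)$. For $h>0$ the probability of $\{h+\sqrt vG\le0\}$ is $e^{-\beta h^2/(2\chi)+o(\beta)}$, so $f_k\approx\log(e^{-\beta}+e^{-\beta h^2/2\chi})$ and $f_k/\beta\to-\min\{h^2/2\chi,1\}$. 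The recursion \eqref{eq:recursion} with $x_i=m_i/\beta$ becomes exactly
\[
\tilde g_{i-1}(h)=\frac1{m_i}\log\E\exp\{m_i\tilde g_i(h+\sqrt{q_i-q_{i-1}}G)\},
\]
with the top increment depending mildly on $\beta$. Bounded convergence, applied inductively downward and using the continuity of $g_i$ and the Gaussian convolution in its variance, gives $\tilde g_0\to g_0$ locally uniformly. Hence $\mathcal J_\beta/\beta\to J_0$. Since $\alpha\ge0$, the two limits combine to give $B_\alpha\le e(\alpha)$.

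\emph{Crossing.} $B_\alpha$ is affine in $\alpha$ with slope $-J_0$. Here $J_0<0$ (strictly, since $g_k<0$ on $h>0$ and $\chi$ is finite; one may assume $J_0<0$, as otherwise $C$ is undefined or infinite and the claim is trivial). At $\alpha=C$ we get $B_C=1$, hence $e(C)\ge1$. Suppose $C<\alpha_\star$. Then the strict inequality $e(\alpha)<1$ for $\alpha<\alpha_\star$ (Section~\ref{sec:strategy} and Theorem~\ref{main:threshold}) gives a contradiction, so $\alpha_\star\le C$. I expect the main obstacle to be the uniform-in-$h$ control of the terminal limit near $h=0$ and for large $h$. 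Both are handled by the bounds $-1-o(1)\le\tilde g_k\le0$ together with dominated convergence, since the Gaussian averages only need pointwise convergence almost everywhere.
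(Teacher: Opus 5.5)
Your proposal is correct and follows essentially the same route as the paper's proof: the same admissible profile $q_k(\beta)=1-\chi/\beta$, $x_i(\beta)=m_i/\beta$; the rescaled recursion with values in $[-1,0]$ and bounded convergence, with the $\beta$-dependent top variance handled by continuity of Gaussian convolution in its variance (the paper couples the increments instead); $\beta\lambda_i\to\ell_i$ for the entropy; and then $B_C=1$ together with the strict crossing inequalities to conclude $\alpha_\star\le C$. The only cosmetic difference is that you bound $\mathcal E/\beta$ by a $\limsup$ using the sign of $\tfrac12\log(\chi/\beta)$, whereas the paper notes that this term is $O(\log\beta/\beta)$ and takes the exact limit $\mathcal E/\beta\to E_0$.
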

The proof is given in Appendix~\ref{app:trials}.

\subsection{A certified one-step trial}
A single fixed trial suffices to prove the following bound.
\begin{corollary}\label{num:seven}
The critical density satisfies $\alpha_\star<7$.
\end{corollary}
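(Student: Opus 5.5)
By Proposition~\ref{num:family}, it suffices to exhibit one finite family $(q,\boldsymbol m,\chi)$ whose crossing satisfies $C(q,\boldsymbol m,\chi)<7$. The proposition then gives $\alpha_\star\le C<7$. Equivalently, by convexity and uniqueness of the crossing in Theorem~\ref{main:threshold}, it suffices to show $B_7>1$ for that family: then $e(7)\ge B_7>1$, which forces $\alpha_\star<7$. I would use a one-step trial, $k=1$, with $q_0=0$, $q_1=1$, a single mass $m_1=m>0$ and a residual parameter $\chi>0$, all chosen rational. With $q_0=0$ we get $\ell_1=\chi$ and $\ell_0=\chi+m$. The entropy is then explicit:
\[
E_0=\frac{1}{2m}\log\frac{\chi+m}{\chi}.
\]
The row term is $J_0=g_0(0)=\frac1m\log\E\exp\{-m\psi_\chi(G)\}$, a single Gaussian expectation.

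The expectation splits according to the clipped quadratic. Half the mass, $G\le 0$, contributes exactly $1/2$. The region $0<G<\sqrt{2\chi}$ contributes $\int_0^{\sqrt{2\chi}}e^{-m g^2/(2\chi)}\phi(g)\,\dd g$. Since $e^{-mg^2/(2\chi)}\phi(g)$ is a Gaussian density with variance $\chi/(\chi+m)$ up to a constant, this equals
\[
\sqrt{\tfrac{\chi}{\chi+m}}\left(\Phi\left(\sqrt{2(\chi+m)}\right)-\tfrac12\right).
\]
The tail $G\ge\sqrt{2\chi}$ contributes $e^{-m}\bigl(1-\Phi(\sqrt{2\chi})\bigr)$. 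So $-J_0=-\frac1m\log A$, where $A$ is a combination of two normal distribution function values, one square root and one exponential.

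The parameters come from numerically minimizing $C=(1+E_0)/(-J_0)$ over $(m,\chi)$. Consistent with the reported full-RSB value $\approx 6.698$ and Stojnic's second-level values near $6.7$, a one-step trial should land somewhat above $6.7$ but, I expect, comfortably below $7$. I would then round the optimizer to nearby simple rationals, keeping a safety margin of order $10^{-2}$.

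The main obstacle is certification. The inequality $1+E_0<-7J_0$, equivalently $m+\tfrac12\log\frac{\chi+m}{\chi}<-7\log A$, must be verified with rigorous enclosures rather than floating point. I would bound $\log$, $\exp$ and $\Phi$ by truncated series with explicit remainder bounds, evaluated in exact rational arithmetic. For $\Phi(t)-\tfrac12$ I would use the alternating Taylor series of $\int_0^t\phi$. For the upper tail I would use the complementary series, or the Taylor series together with the alternating-series error. Each quantity is replaced by a rational upper or lower bound in the direction that makes the target inequality harder: upper bounds for $E_0$ and for $A$, and a lower bound for $-\log A$. One then checks the resulting rational inequality exactly.

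The margin of about $0.3$ relative to $7$ makes modest truncation orders suffice. The remaining work is routine bookkeeping of the directions of the inequalities.
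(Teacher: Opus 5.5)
Your reduction is sound: by Proposition~\ref{num:family}, one admissible family with $B_7>1$ (equivalently, crossing below $7$) gives $\alpha_\star<7$. Your closed forms for $E_0$ and $J_0$ are also correct when $q_0=0$, $q_1=1$. The gap is the choice of trial: \emph{with $q_0=0$ no choice of $(m,\chi)$ reaches below $7$.}

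Evaluating your expression $C=\bigl(m+\tfrac12\log(1+m/\chi)\bigr)/(-\log A)$ numerically, the minimum lies near $m\approx0.7$, $\chi\approx0.3$. There $A\approx0.840$ and $C\approx7.45$, equivalently $B_7\approx0.89<1$. The rest of the range is no better:
\begin{itemize}
\item As $m\downarrow0$ you get $E_0\to1/(2\chi)$ and $-J_0\to\E\psi_\chi(G)$. This is exactly the replica-symmetric trial, whose best crossing is $7.6477$.
\item Since $A\geq\tfrac12$, you have $C\geq m/\log2$, so large masses are worse.
\end{itemize}
The value $\approx7.45$ sits next to the partial second-level value $7.4486$ quoted in the paper, not the one-step value $6.7157$. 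Your expectation that a one-step trial lands near $6.7$, with margin $0.3$, applies only when the first overlap is free. The optimal one-step profile in Table~\ref{num:table} has $q_0\approx0.777$ and $\chi\approx0.053$.

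What is missing is a positive first overlap. The paper takes $q_0=39/50$, $\chi=1/20$, $m_1=16/5$. This changes both terms of the trial:
\begin{itemize}
\item The entropy gains the term $q_0/(2\ell_0)=195/377$.
\item $J_0=\E g_0(\sqrt{q_0}G)$ is no longer a closed-form constant. Here $g_0(h)=\frac1{m}\log A(h)$, where $A(h)$ is your three-term expression with the field $h$ inserted. It comes from completing the square on the window $0<h+\sqrt{1-q_0}\,G<\sqrt{2\chi}$. Then $J_0$ is an outer Gaussian integral of $\log A(\sqrt{q_0}z)$.
\end{itemize}
So the certificate needs a rigorous quadrature, not just rigorous special-function values. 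The paper obtains it from monotonicity of $F(z)=-\log A(\sqrt{q_0}z)/m$. This traps $\int F\varphi_{\rm G}$ between lower and upper Riemann sums on a grid over $[-4,4]$, weighted by $\Phi_{\rm G}$-increments. The Gaussian tails are handled by $\Phi_{\rm G}(-4)\leq e^{-8}/(4\sqrt{2\pi})$. Your outward-rounded series for $\Phi$, $\exp$ and $\log$ would serve for the pointwise values. Without $q_0>0$ and this integral bracketing, however, the argument cannot succeed.
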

The trial uses $q_0=39/50$, $\chi=1/20$ and $m_1=16/5$.
A rational enclosure gives $B_7>1.06$, so Proposition~\ref{num:family}
implies $e(7)>1$.
Appendix~\ref{app:trials} derives the one-dimensional integral and bounds
its arithmetic, integration and Gaussian-tail errors. The ancillary
checker reproduces all the rational bounds; its files and execution
command are given below.

\subsection{Numerical refinement}
We evaluated the recursions in double precision and searched for small
values of \eqref{num:crossing}. For $k\geq1$, the first Gaussian convolution of
$e^{-m_k\psi_\chi}$ is evaluated analytically, as in Appendix~\ref{app:trials}.
Later convolutions use Gauss--Hermite quadrature and monotone cubic
interpolation on a one-dimensional field grid. The outer Gaussian integral
uses separate panels at the field knots and at Gaussian-scale knots.
Profiles are parametrized by positive overlap and mass increments.
Bounded L-BFGS-B searches use two initial profiles and, when available,
a refinement of the preceding trial, with at most 150 iterations per start.
The eight- and sixteen-level searches reached this iteration limit.

\begin{table}[htbp]
\centering
\caption{Feasible-family trial crossings, reevaluated with 3201 field points and quadrature order 256. The search uses 401 points and order 24. $\Delta C$ is the signed change on reevaluation. The listed decimal values are numerical observations.}
\label{num:table}
\begin{tabular}{r r r r r}
\hline
$k$ & $C$ & $q_0$ & $\chi$ & $\Delta C$ \\
\hline
0 & 7.647695 & 1.00000 & 0.63036 & $0$ \\
1 & 6.715697 & 0.77723 & 0.05330 & $-5.15\cdot10^{-8}$ \\
2 & 6.700392 & 0.74113 & 0.03566 & $+1.50\cdot10^{-6}$ \\
4 & 6.698686 & 0.71979 & 0.03079 & $-1.29\cdot10^{-7}$ \\
8 & 6.698145 & 0.71907 & 0.02785 & $-3.70\cdot10^{-7}$ \\
16 & 6.698073 & 0.72076 & 0.02530 & $+3.28\cdot10^{-6}$ \\
\hline
\end{tabular}
\end{table}

The listed crossings are values of feasible trial families. Global
optimization and the displayed decimal places are not certified.
Grid resolution, Gaussian quadrature order and field truncation were varied
separately while holding each trial fixed. The table reports the change
between the search evaluation and the finest reevaluation; these changes
measure numerical sensitivity and do not bound the optimization error.
The replica-symmetric and one-step values reproduce the corresponding
calculations in \cite{Maillard2025Injectivity}. Higher-level trials are
consistent with the numerical prediction near $6.698$, without proving a
two-sided decimal enclosure for $\alpha_\star$.

The temperature comparison in Figure~\ref{num:figure} concerns a fixed
family. At any fixed $\beta$, a feasible profile gives
$-\mathcal P_{\alpha,\beta}/\beta\leq-P_*(\alpha,\beta)/\beta$, while
$e(\alpha)\leq-P_*(\alpha,\beta)/\beta$. These two inequalities impose
no ordering between a finite-temperature trial and $e(\alpha)$.
The one-sided conclusion in Proposition~\ref{num:family} uses the
justified limit of an entire admissible family. Corollary~\ref{num:seven}
is obtained from that limit and the rational certificate.

\paragraph{Code and data availability.}
The paper's source package includes the ancillary directory \path{anc/}.
From that directory, run the exact-arithmetic checker using Python's
standard library:
\begin{quote}
\small
\begin{verbatim}
python verify_one_step.py --config one_step_parameters.json \
    --output certificate.json
\end{verbatim}
\end{quote}
Saved profiles, resolution studies and finite-temperature values are in
\path{trial_data.json}; \path{plot_profile_experiments.py} generates
Table~\ref{num:table} and Figure~\ref{num:figure} from these data. The table
rounds $C$ to six decimal places; the data retain full precision.
See \path{README.txt} for dependencies and reproduction commands.

\begin{figure}[htbp]
 \centering
 \includegraphics[width=\textwidth]{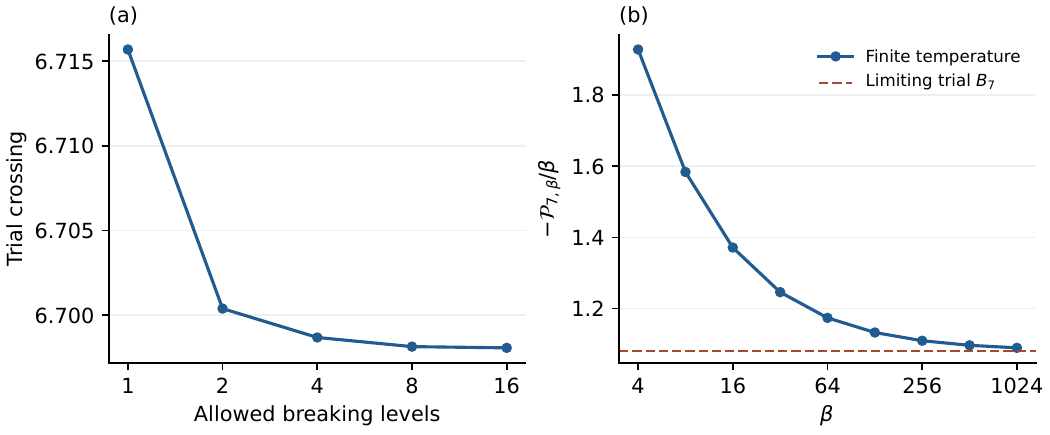}
 \caption{Finite-family trial evaluation. (a) The trial crossings decrease
 from $6.7157$ at one breaking level to $6.6981$ at sixteen levels. (b) Finite-temperature values for the rational one-step family
 of Corollary~\ref{num:seven}, decreasing toward its limiting trial value
 $B_7\approx1.081$, shown by the dashed horizontal line. Floating-point refinement is separate
 from the rational certificate \eqref{num:certificate}.}
 \label{num:figure}
\end{figure}

\clearpage
\appendix
\section{Geometry and the zero-temperature approximation}\label{sec:geometry}

Write
\[
 E_{n,m}(W)=\frac1n\min_{u\in S^{n-1}}H_W(u),\qquad
 h(m,n)=\E\min_{u\in S^{n-1}}H_W(u).
\]
The minimum is attained, since $H_W$ has a finite, nonempty set of values.
It is measurable: for each integer $r$, the event $\min H_W\leq r$ is a
finite union of events asserting that a specified collection of homogeneous
weak inequalities has a solution on the compact unit sphere. Each such
event is closed in the matrix entries.

\begin{lemma}\label{geo:criterion}
Almost surely,
\[
 f_W\text{ is injective on }\R^n
 \quad\Longleftrightarrow\quad E_{n,m}(W)\geq1.
\]
\end{lemma}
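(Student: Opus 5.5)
We work on the almost-sure event $\Omega_0$ on which the rows $w_1,\ldots,w_m$ are in general position: every set of at most $n$ rows is linearly independent. This holds because a fixed set of $k\le n$ rows is dependent only on a Lebesgue-null algebraic set. We also note that $E_{n,m}(W)\ge 1$ means $H_W(u)\ge n$ for every unit $u$. Since $H_W$ is homogeneous of degree zero, the same bound holds at every nonzero $x$, where $H_W(x)$ counts the rows with $w_i\cdot x>0$.

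\emph{Sufficiency.} Suppose $H_W\ge n$ everywhere, and let $f_W(x)=f_W(y)$. Take any row $i$ with $w_i\cdot x>0$. Then $\ReLU(w_i\cdot y)=w_i\cdot x>0$, so $w_i\cdot y=w_i\cdot x$. Hence $w_i\cdot(x-y)=0$ for every row active at $x$. If $x\ne 0$, there are at least $n$ such rows. On $\Omega_0$, any $n$ of them span $\R^n$, which forces $x=y$. If $x=0$, then $f_W(y)=0$, so no row is active at $y$. Thus $y=0$, since otherwise $H_W(y)\ge n\ge 1$.

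\emph{Necessity.} Suppose some unit $u$ has $H_W(u)\le n-1$. Let $A=\{i:w_i\cdot u>0\}$ and $Z=\{i:w_i\cdot u=0\}$. The first step is to replace $u$ by a direction with no zero fields and active set exactly $A$. On $\Omega_0$, the rows in $Z$ are orthogonal to $u\ne0$, so $|Z|\le n-1$. The rows in $Z$ are linearly independent, so we can pick $d$ with $w_i\cdot d=-1$ for $i\in Z$. Set $u'=u+\epsilon d$ with $\epsilon>0$ small. For rows outside $Z$, the strict signs at $u$ persist. For rows in $Z$, $w_i\cdot u'=-\epsilon<0$. So $u'$ lies in an open cone $U$ on which every $w_i\cdot x$ has a fixed nonzero sign, and exactly the rows in $A$ are positive.

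On $U$ the map is linear: $f_W(x)=D\,Wx/\sqrt n$, where $D$ is the diagonal indicator of $A$. The rows in $A$ number at most $n-1$, so they do not span $\R^n$. Pick a nonzero $z$ with $w_i\cdot z=0$ for all $i\in A$. Since $U$ is open, $u'$ and $u'+tz$ both lie in $U$ for small $t>0$. These two points have the same image under $f_W$, so $f_W$ is not injective.

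The only delicate step is the perturbation off the row hyperplanes. It depends on general position bounding $|Z|\le n-1$ and making the system $w_i\cdot d=-1$, $i\in Z$, solvable. This point is handled above; the rest is linear algebra.
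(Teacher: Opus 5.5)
Your proof is correct and follows essentially the paper's route: for necessity you perturb off the zero-field hyperplanes by solving $w_i\cdot d=-1$ on those rows, which gives an open sign chamber with the same active set, and then move along a common null vector of the at most $n-1$ active rows. For sufficiency you use the rows active at $x$ and treat $x=0$ separately, whereas the paper uses the rows with nonnegative field at the midpoint $(a+b)/2$; this is a harmless and slightly simpler variant of the same linear-algebra argument.
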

\begin{proof}
Gaussian rows are in general position: every existing set of at most $n$
rows is linearly independent. For an index set $I$, let $W_I$ be
the matrix consisting of those rows, and let $\mathbf1$ denote the
all-ones vector of the required dimension. At a nonzero $u$, at most
$n-1$ rows vanish.
For their index set $I$, solve $W_Iv=-\mathbf1$ and perturb $u$ to
$u+tv$, with $t>0$ small. The zero fields become negative and all other
signs are unchanged. Thus each boundary count is attained in an open
chamber. If that count is below $n$, the active rows have a nonzero common
null vector. A sufficiently small displacement along it leaves the ReLU
output unchanged.

Conversely, suppose every spherical count is at least $n$ and
$f_W(a)=f_W(b)$. At $z=(a+b)/2$, every row with nonnegative field
annihilates $a-b$: equal positive outputs give equal fields, while equal
zero outputs and a nonnegative midpoint force both fields to vanish.
These rows span $\R^n$, by general position if $z\ne0$, and by full
column rank if $z=0$. Hence $a=b$. If $m<n$, rank deficiency gives
noninjectivity and the count condition also fails.
\end{proof}

\begin{lemma}[Row comparison]\label{geo:rows}
For integers $s\geq0$ and $1\leq m\leq M$,
\begin{equation}\label{eq:row-comparison}
 0\leq h(m+s,n)-h(m,n)\leq s/2,
 \qquad \frac{h(m,n)}m\leq\frac{h(M,n)}M.
\end{equation}
For $m\geq1$ and $t>0$,
\begin{equation}\label{eq:count-concentration}
 \Prob\left\{\abs{E_{n,m}-h(m,n)/n}\geq t\right\}
 \leq2\exp\left(-\frac{2n^2t^2}{m}\right).
\end{equation}
\end{lemma}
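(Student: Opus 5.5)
The plan is to prove the three assertions in turn. Each rests on one elementary fact: adding or changing a single row changes $\min_u H_W(u)$ by at most one.

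\emph{Increment bound.} Realize $W'\in\R^{(m+s)\times n}$ by appending $s$ independent rows $w_{m+1},\dots,w_{m+s}$ to $W$. Since $H_{W'}\geq H_W$ pointwise, $\min H_{W'}\geq\min H_W$, which gives the lower bound $0\leq h(m+s,n)-h(m,n)$. For the upper bound, let $u^\ast$ be a minimizer of $H_W$, chosen measurably from the finitely many chambers (for instance, the lexicographically first chamber with the minimal count, together with a fixed rule for selecting a point in it). This $u^\ast$ depends only on the first $m$ rows, so it is independent of the new rows. Then
\[
\min H_{W'}\leq H_{W'}(u^\ast)=\min H_W+\sum_{j=m+1}^{m+s}\ind\{w_j\cdot u^\ast>0\}.
\]
Conditionally on $W$, each indicator is Bernoulli$(1/2)$. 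Taking expectations gives $h(m+s,n)-h(m,n)\leq s/2$.

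\emph{Superadditivity-type ratio bound.} I would use an averaging argument over row subsets. Given $M$ rows, for each $m$-subset $I$ of the row indices write $W_I$ for the corresponding submatrix. For any $u$,
\[
\binom{M-1}{m-1}H_W(u)=\sum_{|I|=m}H_{W_I}(u),
\]
because each row appears in exactly $\binom{M-1}{m-1}$ of the subsets. It follows that
\[
\binom{M-1}{m-1}\min H_W\geq\sum_{|I|=m}\min H_{W_I}.
\]
Each $W_I$ has the law of an $m\times n$ Gaussian matrix, so taking expectations yields $\binom{M-1}{m-1}h(M,n)\geq\binom{M}{m}h(m,n)$. Since $\binom{M}{m}/\binom{M-1}{m-1}=M/m$, this is exactly $h(m,n)/m\leq h(M,n)/M$.

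\emph{Concentration.} The quantity $\min H_W$ is a function of the $m$ independent rows $w_1,\dots,w_m$. Replacing one row $w_i$ by $w_i'$ changes $H_W(u)$ by at most one for every $u$, so it changes the minimum by at most one. McDiarmid's bounded differences inequality with constants $c_i=1$ then gives
\[
\Prob\{\abs{\min H_W-h(m,n)}\geq r\}\leq 2e^{-2r^2/m}.
\]
Setting $r=nt$ gives \eqref{eq:count-concentration}.

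The only delicate point is measurability, together with the independence of the minimizer from the fresh rows in the first step. Both are handled by the finite-chamber structure described before Lemma~\ref{geo:criterion}. I expect the averaging identity in the second step to be the main idea rather than an obstacle.
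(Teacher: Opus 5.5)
Your proposal is correct and follows the paper's proof step for step. The increment bound uses a measurably chosen minimizer of the first $m$ rows, at which each independent appended row is positive with probability $1/2$. The ratio bound uses the subset-averaging inequality $\sum_{|I|=m}\min H_{W_I}\leq\binom{M-1}{m-1}\min H_W$, and the concentration uses bounded differences with unit constants. The only cosmetic difference is that the paper picks the minimizer from a fixed countable dense set via the chamber argument, which is the same measurable-selection device you describe.
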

\begin{proof}
Adding rows can only increase the minimum. Choose a minimizing direction
measurably from a fixed countable dense set, which is possible by the
chamber argument above. Each independent new Gaussian row has a positive
field there with probability $1/2$. This proves the first inequality.

For a fixed $M$-row matrix and each $m$-element subset $I$ of rows, let
$H_I$ denote its count. Then
\[
 \sum_{\abs I=m}\min_u H_I(u)
 \leq\min_u\sum_{\abs I=m}H_I(u)
 =\binom{M-1}{m-1}\min_u H_W(u).
\]
Taking expectations proves the second inequality. Finally, replacing one
row changes $\min H_W$ by at most one. The bounded-differences inequality
gives \eqref{eq:count-concentration}.
\end{proof}

Only rough bounds are needed to locate an energy-one crossing.
\begin{lemma}\label{geo:anchors}
Along every sequence $m_n/n\to4$,
$\limsup_n h(m_n,n)/n\leq1$. Along every sequence $m_n/n\to16$,
$p_{m_n,n}\to1$.
\end{lemma}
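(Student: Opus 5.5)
For the first assertion, at ratio $4$, we will not construct a good direction by hand. Instead we will combine Wendel's formula with the row comparison of Lemma~\ref{geo:rows}. Put $m'=2n-1$. Wendel's theorem~\cite{Wendel1962Problem} says that $m'$ i.i.d.\ standard Gaussian vectors in $\R^n$ lie in a common open half-space with probability
\[
 2^{-m'+1}\sum_{k=0}^{n-1}\binom{m'-1}{k}\geq\tfrac12 .
\]
If they do, there is a $u$ with $w_i\cdot u<0$ for all $i$, so $\min H_W=0$, and hence $\Prob\{E_{n,m'}=0\}\geq1/2$. By \eqref{eq:count-concentration} with $t=t_n=n^{-1/3}$, we have $\Prob\{|E_{n,m'}-h(m',n)/n|\geq t_n\}\to0$. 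Therefore, for large $n$, the event $\{E_{n,m'}=0\}$ must meet the complementary event, which forces $h(m',n)/n<t_n\to0$.

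Now take $m_n$ with $m_n/n\to4$. Then $s=m_n-m'\geq0$ for large $n$, and $s/n\to2$. The first inequality in \eqref{eq:row-comparison} gives
\[
 h(m_n,n)\leq h(m',n)+s/2 .
\]
Dividing by $n$, this yields $\limsup_n h(m_n,n)/n\leq 0+1=1$.

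For the second assertion, at ratio $16$, we will use the criterion of Lemma~\ref{geo:criterion} together with a union bound. By that lemma it suffices to show $\Prob\{\min H_W\leq n-1\}\to0$. If $\min H_W\leq n-1$, then by the chamber argument in the proof of Lemma~\ref{geo:criterion} there is a direction $u$ at which at least $m-n+1$ rows have strictly negative field. In particular, some set $S$ of exactly $m-n+1$ rows lies in an open half-space. Wendel's formula for a fixed $S$ and a union bound over the $\binom{m}{n-1}$ choices of $S$ give
\[
 \Prob\{\min H_W\leq n-1\}\leq\binom{m}{n-1}\,2^{-(m-n)}\sum_{k=0}^{n-1}\binom{m-n}{k}.
\]
For $m/n\to16$ the sum is at most $n\binom{m-n}{n-1}$, since its terms increase in $k$ because $m-n>2n$. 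Stirling's formula then bounds the exponent by
\[
 n\bigl[16H(\tfrac1{16})+15H(\tfrac1{15})-15\log2\bigr]+O(\log n),
\]
where $H$ is the natural-log binary entropy. Numerically, $16H(1/16)\approx3.74$, $15H(1/15)\approx3.67$ and $15\log 2\approx10.40$. The bracket is therefore about $-2.98<0$, and the probability decays exponentially. Lemma~\ref{geo:criterion} then gives $p_{m_n,n}\to1$.

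The main point needing care is the use of open versus closed half-spaces and the reduction of boundary directions to open chambers. Both are handled by general position, exactly as in Lemma~\ref{geo:criterion}. The remaining work is routine Stirling bookkeeping, which must be made uniform along sequences $m_n/n\to16$ rather than only at $m=16n$. This is harmless because the exponent is continuous in the ratio and is strictly negative at $16$.
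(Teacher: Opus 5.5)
Your proof is correct, and it reaches both anchors by a somewhat different route than the paper.

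For the density-$4$ bound, the paper exposes $k=\lfloor(2-\varepsilon)n\rfloor$ rows and uses that Wendel's probability $2^{-(k-1)}\sum_{j<n}\binom{k-1}{j}$ tends to one below ratio two. It then conditions on a measurably chosen separating direction, so that the remaining $m_n-k$ fields are fresh standard normals, and finally lets $\varepsilon\downarrow0$. You instead work exactly at $2n-1$ rows, where binomial symmetry gives separation probability at least $1/2$. You upgrade this to $h(2n-1,n)=o(n)$ with the bounded-differences bound \eqref{eq:count-concentration}, and then add the remaining rows with \eqref{eq:row-comparison}. This avoids the binomial asymptotics and the extra $\varepsilon$-limit, at the price of invoking concentration. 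The conditional-independence step is not gone; it sits inside the proof of row comparison.

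For density $16$, the paper bounds the expected number of feasible sign patterns with fewer than $n$ positive entries by $2^{1-m}S^2$, using the chamber count. You instead take a union bound over the $\binom{m}{n-1}$ sets of $m-n+1$ rows that must be simultaneously negative and apply Wendel to each. Both are first-moment arguments on Cover--Wendel counts. The paper's version gives a slightly better exponent (about $-3.61n$ versus your $-2.98n$) and closes with an analytic inequality rather than decimals.

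Your decimals can be certified the same way: $\mathsf h(p)\leq p\log(e/p)$ gives $16\mathsf h(1/16)+15\mathsf h(1/15)\leq 2+\log 240<8<15\log 2$. The Stirling error along a general sequence $m_n/n\to16$ is $o(n)$ rather than $O(\log n)$. As you note, this is harmless, since the exact bound $\binom{N}{j}\leq e^{N\mathsf h(j/N)}$ and continuity of $\mathsf h$ suffice.
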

\begin{proof}
The central hyperplane count \cite{Cover1965Geometrical,Wendel1962Problem}
is
\begin{equation}\label{eq:chamber-count}
 C(m,n)=2\sum_{j=0}^{n-1}\binom{m-1}{j}.
\end{equation}
It follows by inserting the last hyperplane: the additional chambers are
the chambers of the induced general-position arrangement in dimension
$n-1$, giving $C(m,n)=C(m-1,n)+C(m-1,n-1)$, with $C(1,n)=C(m,1)=2$.
Independent row-sign symmetry shows that each fixed sign pattern is
feasible with probability $2^{-m}C(m,n)$.

Fix $\varepsilon\in(0,1)$ and expose the first
$k=\lfloor(2-\varepsilon)n\rfloor$ rows. The probability that they admit
a strictly negative direction is
\[
 2^{-(k-1)}\sum_{j=0}^{n-1}\binom{k-1}{j}=1-o(1).
\]
Choose such a direction measurably from rational directions, using a fixed
default direction if no separator exists. The remaining fields are
conditionally independent standard normals. Consequently
\[
 h(m_n,n)\leq\frac{m_n-k}{2}+k\,o(1).
\]
First let $n\to\infty$, then $\varepsilon\downarrow0$.

For the upper anchor, let $S=\sum_{j=0}^{n-1}\binom mj$. The expected
number of feasible sign patterns with fewer than $n$ positive coordinates
is at most $2^{1-m}S^2$, by \eqref{eq:chamber-count}. For $m\geq2n$,
the binomial entropy bound gives
\[
 S\leq\exp\{m\,\mathsf h((n-1)/m)\},\qquad
 \mathsf h(p)=-p\log p-(1-p)\log(1-p).
\]
Moreover $\mathsf h(p)\leq p\log(e/p)$, so
\[
 2\mathsf h(1/16)\leq\frac{\log(16e)}8<\log2.
\]
The last inequality follows from $\log2>1/2$. Thus the expected number
decays exponentially when $m/n\to16$. Markov's inequality and
Lemma~\ref{geo:criterion} prove the claim.
\end{proof}

\subsection{A uniform lower bound on sublevel volumes}
The discontinuity of the cost requires a volume estimate uniform over
directions selected after observing $W$.

\begin{proposition}\label{geo:volume}
Fix $A\geq2$ and $\eta\in(0,1/8]$. There are constants
$\rho=\rho(A,\eta)>0$ and $C=C(A,\eta)<\infty$ such that, whenever
\[
 n\geq\eta^{-1},\qquad
 \lceil(1+\eta)n\rceil\leq m\leq An,
\]
with probability at least $1-4e^{-3n}$ the following holds simultaneously
for every $u\in S^{n-1}$: there exists $q\in S^{n-1}$ such that
\begin{equation}\label{eq:cap-count}
 H_W(z)\leq H_W(u)+4\eta n
 \quad\text{whenever }\norm{z-q}\leq\rho,
 \qquad
 \mu_n\{z:\norm{z-q}\leq\rho\}\geq e^{-Cn}.
\end{equation}
\end{proposition}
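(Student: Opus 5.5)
The plan is to build, for an arbitrary (possibly $W$-dependent) $u$, an explicit nearby point $q$ at which all but $3\eta n$ of the rows that are nonpositive at $u$ are \emph{uniformly} negative. Rows positive at $u$ need no attention, since they can only lower $H_W(z)$ or leave it unchanged. If the negativity margin is of order $\sqrt n$ per row, a cap of fixed radius $\rho$ around $q$ keeps those rows negative. All randomness enters through three matrix events, each holding with probability at least $1-e^{-3n}$ and uniform over all index sets, so $u$ may be chosen after seeing $W$.

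\emph{Step 1 (uniform matrix events).} First, every row satisfies $\norm{w_i}\leq K\sqrt n$ with $K=K(A)$; this follows from Gaussian norm concentration and a union bound over $m\leq An$ rows. Second, for every index set $I$ with $\abs I\leq(1-\eta)n$, the smallest singular value of $W_I^{\mathsf T}$ (equivalently of the $\abs I\times n$ matrix $W_I$ acting on $\R^{\abs I}$) is at least $c\eta\sqrt n$. Third, for every $S$ with $\abs S=k:=\lceil(1+2\eta)n\rceil\leq m$, we have $\inf_{u\in S^{n-1}}\norm{W_Su}\geq c\eta\sqrt n$. The last two follow from Gordon-type bounds $\sigma_{\min}\gtrsim\sqrt{\max}-\sqrt{\min}$ for rectangular Gaussian matrices, with Gaussian concentration $e^{-ct^2}$, together with a union bound over at most $\binom{An}{k}\leq e^{A\mathsf h(1/A)n}$ subsets. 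The deviation $t$ is taken as a large multiple of $\sqrt n$, and the constant $c=c(A,\eta)$ is shrunk to absorb the entropy factor and the target $e^{-3n}$. I expect this bookkeeping to be the main obstacle. One must check that the gap $\sqrt k-\sqrt n\approx\eta\sqrt n$ beats the combinatorial entropy, and it does only after $c$ is made small depending on $\eta$ and $A$; the hypothesis $m\geq(1+\eta)n$ is what makes sets $S$ of size $k$ available when this matters. If $m<k$, the third event is vacuous and the count in Step 2 is automatically small enough.

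\emph{Step 2 (few near-zero fields).} Put $\tau=c\eta/\sqrt{1+2\eta}$. Let $N=\{i:w_i\cdot u\leq0\}$ and $S=\{i\in N:w_i\cdot u>-\tau\sqrt n\}$. If $\abs S\geq k$, any $k$ of those rows would give $\norm{W_Su}<\tau\sqrt{kn}$, which contradicts the third event for small enough $\tau$. Hence $\abs S<(1+2\eta)n+1$. Choose $I\subset S$ with $\abs I=\min\{\abs S,\lfloor(1-\eta)n\rfloor\}$. Then $\abs{S\setminus I}\leq3\eta n+2\leq4\eta n$, using $n\geq\eta^{-1}$.

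\emph{Step 3 (displacement and cap).} Using the second event, solve $W_Iv=-\sqrt n\,\mathbf 1$ with $v$ in the row space of $W_I$. This gives $\norm v\leq\sqrt{\abs I n}/(c\eta\sqrt n)\leq C_1\sqrt n$. Set $\tilde q=u+tv/\sqrt n$ and $q=\tilde q/\norm{\tilde q}$, with $\norm{\tilde q}\leq1+tC_1$. Rows in $I$ get fields $w_i\cdot\tilde q\leq-t\sqrt n$. Rows in $N\setminus S$ get $w_i\cdot\tilde q\leq-\tau\sqrt n+tKC_1\sqrt n\leq-\tfrac\tau2\sqrt n$ once $t\leq\tau/(2KC_1)$. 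So every row in $N\setminus(S\setminus I)$ has $w_i\cdot q\leq-\kappa\sqrt n$ with $\kappa=\min\{t,\tau/2\}/(1+tC_1)$. For $\norm{z-q}\leq\rho:=\kappa/(2K)$ we get $w_i\cdot z\leq-\kappa\sqrt n+K\sqrt n\rho<0$. Only rows in $S\setminus I$ or outside $N$ can then be positive, which gives $H_W(z)\leq H_W(u)+4\eta n$. Finally, the normalized measure of a spherical cap of fixed chordal radius $\rho$ is at least $(c'\rho)^{n}$ by the standard cap estimate, which is $e^{-Cn}$ with $C=\log(1/(c'\rho))$. None of the constants depend on $u$ or $n$.
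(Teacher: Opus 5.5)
Your architecture matches the paper's: uniform singular-value events, few near-zero fields, a minimum-norm correction on $I$, then a cap. The argument has a genuine gap of scale, however.

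\textbf{Scale error in Steps 2 and 3.} For unit $u$ the fields $w_i\cdot u$ are of order one, not $\sqrt n$. On the event $\norm W_{\mathrm{op}}\leq K\sqrt n$ one has $\sum_i(w_i\cdot y)^2\leq K^2n\norm y^2$ for every $y$. Hence at most $K^2/\tau^2$ rows satisfy $w_i\cdot u\leq-\tau\sqrt n$, and no $q$ can give a margin $\kappa\sqrt n$ to linearly many rows. Your set $S$ is therefore all of $N$ up to $O(1)$ rows. Step 2 would then say that every direction has at most about $(1+2\eta)n$ nonpositive rows. This is false at a minimizing direction as soon as $m\geq3n$, since there $\abs N\geq m-H_W(u)\geq m/2$. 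The claimed contradiction is spurious: $\tau\sqrt{kn}\approx\tau\sqrt{1+2\eta}\,n$, which does not contradict $\norm{W_Su}\geq c\eta\sqrt n$. Likewise in Step 3, $W_I(tv/\sqrt n)=-t\mathbf 1$, so the rows of $I$ receive margin $t$, not $t\sqrt n$.

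\textbf{Why rescaling alone does not fix it.} Suppose you use threshold and margin of order one, for which Step 2 is fine after adjusting constants. The per-row bound $\abs{w_i\cdot y}\leq K\sqrt n\norm y$ then protects a row of margin $\kappa=O(1)$ only for displacements of size $\kappa/(K\sqrt n)$. The cap radius becomes $O(n^{-1/2})$ and its measure $e^{-\frac12n\log n+O(n)}$. That superexponential loss is exactly what the proposition is designed to exclude.

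\textbf{The missing idea: count rather than protect every row.} On the operator-norm event, $\#\{i:\abs{w_i\cdot y}>\theta\}\leq K^2n\norm y^2/\theta^2$. So a displacement of fixed size $\norm y\leq\theta\sqrt\eta/(2K)$ spoils at most $\eta n/4$ rows. The paper spends this twice. Once on the correction $v$ acting on rows with $w_i\cdot u<-T$, with threshold $T/2$. Once on the cap displacement acting on protected rows of margin $\kappa/2$. The $4\eta n$ budget must therefore be split. The paper takes $r=\lceil(1+\eta)n\rceil$, which leaves at most $3\eta n$ discarded near-zero rows; your choice $(1+2\eta)n$ uses up the whole $4\eta n$ in Step 2. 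For the same reason your first event must be the operator-norm bound, not row-norm bounds.

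\textbf{Step 1 needs a different tool.} Gordon's bound puts the mean of each relevant smallest singular value near $\eta\sqrt n$ at the extremal subset sizes. Gaussian concentration around that mean can therefore give a per-subset failure probability of order $e^{-\eta^2n/2}$ at best. The union bound over up to $2^{An}$ subsets with target $e^{-3n}$ needs $e^{-(A\log2+3)n}$. Shrinking $c$ does not change this, since the required deviation $\sqrt{2(A\log2+3)n}$ exceeds the mean for every $\eta\leq1/8$. What is needed is a small-ball estimate for the lower tail, whose failure probability decays like $(C\epsilon)^{\Theta(\eta n)}$. The paper obtains it from nets together with $\Prob\{\norm G\leq R\}\leq(\sqrt eR/\sqrt d)^d$. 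This forces the singular-value constant $s=e^{-(D+3)/\eta}$ to be exponentially small in $1/\eta$. Your events are true with such a constant, but not by the argument you sketched.
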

\begin{proof}
Let $r=\lceil(1+\eta)n\rceil$, $k=\lfloor(1-\eta)n\rfloor$ and set
\begin{align*}
 K&=\sqrt{8((A+1)\log9+3)},\qquad
 D=A\log2+\log(3K)+A\log(2\sqrt e),\\
 s&=e^{-(D+3)/\eta},\qquad T=\frac{s}{2\sqrt A},\qquad
 \kappa=\frac{sT\sqrt\eta}{4K},\qquad
 \rho=\frac{\kappa\sqrt\eta}{4K}.
\end{align*}
Write $\|A\|_{\mathrm{op}}=\sup_{\|z\|=1}\|Az\|$ for the operator
norm and $\sigma_{\min}(A)=\inf_{\|z\|=1}\|Az\|$ for the least
singular value of a matrix with at least as many rows as columns.
We first prove that, except on an event of probability $4e^{-3n}$,
\begin{equation}\label{eq:matrix-event}
 \norm W_{\mathrm{op}}\leq K\sqrt n,\qquad
 \sigma_{\min}(W_I^T)\geq s\sqrt n\ (\abs I=k),\qquad
 \sigma_{\min}(W_J)\geq s\sqrt n\ (\abs J=r).
\end{equation}
An $a$-net is a finite set within distance $a$ of every point of the
sphere. A Euclidean unit sphere in dimension $d$ has such a net of cardinality
at most $(1+2/a)^d$. For a standard Gaussian vector $G\in\R^d$,
\begin{equation}\label{eq:gaussian-small-ball}
 \Prob\{\norm G\leq R\}\leq(\sqrt e R/\sqrt d)^d;
\end{equation}
indeed, apply Markov's inequality to $e^{-\lambda\norm G^2}$ with
$\lambda=d/(2R^2)$. Applying $1/4$-nets to the bilinear expression for
$\norm W_{\mathrm{op}}$ yields
\[
 \Prob\{\norm W_{\mathrm{op}}>K\sqrt n\}
 \leq2\exp\{(m+n)\log9-K^2n/8\}\leq2e^{-3n}.
\]
On the complementary event, failure of the bound for a fixed $I$ in
\eqref{eq:matrix-event} is detected on an $s/K$-net of $S^{k-1}$ with
image norm below $2s\sqrt n$. A union bound over subsets and
\eqref{eq:gaussian-small-ball} give
\[
 \Prob\{\norm W_{\mathrm{op}}\leq K\sqrt n,
             \exists I:\sigma_{\min}(W_I^T)<s\sqrt n\}
 \leq2^m(3K)^k(2\sqrt e)^n s^{n-k}\leq e^{nD}s^{\eta n}=e^{-3n}.
\]
For $J$, use an $s/K$-net of $S^{n-1}$. The corresponding bound is
$2^m(3K)^n(2\sqrt e)^r s^{r-n}\leq e^{-3n}$, proving
\eqref{eq:matrix-event}. The estimate for $I$ also holds for every smaller
nonempty subset, by extending it to size $k$ and padding coefficient
vectors with zeros.

Work on this event and fix $u\in S^{n-1}$. There are at most $r-1$ rows
with $\abs{w_i\cdot u}\leq T$: otherwise a subset $J$ of size $r$ would
satisfy $\norm{W_Ju}\leq T\sqrt{An}=s\sqrt n/2$. Choose $I$ of size
$\min\{k,\abs N\}$ from
$N=\{i:-T\leq w_i\cdot u\leq0\}$. The discarded set has size
\[
 \abs{N\setminus I}\leq\max\{0,r-1-k\}<2\eta n+1\leq3\eta n.
\]
Let $v$ be the minimum-norm solution of $W_Iv=-\kappa\mathbf1$, or zero
if $I$ is empty. Then
\[
 \norm v\leq\kappa/s<1/4,
 \qquad w_i\cdot(u+v)\leq-\kappa\quad(i\in I).
\]
Among rows with $w_i\cdot u<-T$, at most
\[
 \frac{4\norm{Wv}^2}{T^2}
 \leq\frac{4K^2n\kappa^2}{s^2T^2}=\frac{\eta n}4
\]
have $w_i\cdot v>T/2$. All other such rows have field at most
$-T/2\leq-\kappa$ at $u+v$. Thus, except for at most
$3\eta n+\eta n/4$ rows, every initially nonpositive field becomes at
most $-\kappa$.

Normalize $q=(u+v)/\norm{u+v}$. Since $\norm{u+v}<2$, the protected
fields at $q$ are at most $-\kappa/2$. For any $z$ with
$\norm{z-q}\leq\rho$, at most
\[
 \frac{4\norm{W(z-q)}^2}{\kappa^2}
 \leq\frac{4K^2n\rho^2}{\kappa^2}=\frac{\eta n}4
\]
protected fields become positive. Counting the three exceptional sets proves
the first assertion of \eqref{eq:cap-count}.

The cap has angular radius $\theta=2\arcsin(\rho/2)$. Since $\rho<1$,
polar integration gives
\[
 \mu_n\{z:\norm{z-q}\leq\rho\}
 =\frac{\int_0^\theta\sin^{n-2}t\,\dd t}
        {\int_0^\pi\sin^{n-2}t\,\dd t}
 \geq\frac\theta{2\pi}\sin^{n-2}(\theta/2)
 \geq\left(\frac\rho{2\pi}\right)^n.
\]
Take $C=\log(2\pi/\rho)$.
\end{proof}

\begin{corollary}\label{geo:bridge}
Fix $1<a<A$ with $A\geq2$. For every $\delta>0$ there is
$C_{a,A,\delta}<\infty$ such that for all sufficiently large $n$,
all $an\leq m\leq An$, and every $\beta>0$,
\begin{equation}\label{eq:bridge}
 0\leq-\frac{\Phi_{n,m}(\beta)}\beta-\frac{h(m,n)}n
 \leq\delta+\frac{C_{a,A,\delta}}\beta+2Ae^{-3n}.
\end{equation}
\end{corollary}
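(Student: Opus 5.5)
The two inequalities in \eqref{eq:bridge} come from opposite directions, and we treat them separately. For the lower bound, note that $H_W(u)\geq\min H_W$ for every $u$. Hence $Z_{n,m}(\beta)\leq e^{-\beta\min H_W}$, and so
\[
-\frac{\log Z_{n,m}(\beta)}{\beta}\geq\min H_W
\]
pointwise. Taking expectations and dividing by $n$ gives $-\Phi_{n,m}(\beta)/\beta\geq h(m,n)/n$.

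For the upper bound we apply Proposition~\ref{geo:volume} with the given $A$ and an $\eta$ chosen so that the sublevel slack is at most $\delta n$ and the proposition covers every $m\geq an$. We set
\[
\eta=\min\{\delta/4,\,1/8,\,(a-1)/2\}.
\]
For $n$ large we then have $n\geq\eta^{-1}$ and $\lceil(1+\eta)n\rceil\leq an\leq m$. Let $\mathcal G$ be the event of probability at least $1-4e^{-3n}$ furnished by the proposition. On $\mathcal G$ we take $u$ to be a minimizer of $H_W$; the proposition holds simultaneously for all $u$, so a data-dependent choice is allowed. It gives a cap around some $q$, of measure at least $e^{-Cn}$, on which $H_W\leq\min H_W+4\eta n\leq\min H_W+\delta n$. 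Restricting the integral to this cap yields
\[
Z_{n,m}(\beta)\geq e^{-Cn}e^{-\beta(\min H_W+\delta n)},
\]
and therefore
\[
-\frac{\log Z}{\beta n}\leq\frac{\min H_W}{n}+\delta+\frac{C}{\beta}\quad\text{on }\mathcal G .
\]
Here $C=C(A,\eta)$ depends only on $a,A,\delta$, so we may take it as $C_{a,A,\delta}$.

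On $\mathcal G^c$ we need a crude deterministic bound. Since $0\leq H_W\leq m$, we have $Z\geq e^{-\beta m}$, so $-\log Z/(\beta n)\leq m/n\leq A$. We write
\[
-\frac{\Phi}{\beta}=\E\Bigl[-\frac{\log Z}{\beta n}\Bigr]
\]
and split this expectation over $\mathcal G$ and $\mathcal G^c$. The $\mathcal G$ part is at most $\E[\min H_W/n]+\delta+C/\beta$; here we used $\ind_{\mathcal G}\min H_W\leq\min H_W$, which holds because $\min H_W\geq0$. The $\mathcal G^c$ part is at most $A\,\Prob(\mathcal G^c)\leq4Ae^{-3n}$.

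This gives the constant $4A$ rather than the stated $2A$. To recover $2Ae^{-3n}$, one can either use a sharper accounting of the bad event or absorb the excess into $\delta$ for large $n$. Since the statement only holds for large $n$, we would bound $2Ae^{-3n}\leq\delta$ eventually and rescale $\delta$ by a factor of two.

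We do not expect a real obstacle. The substantive work, uniformity over data-selected directions, is already contained in Proposition~\ref{geo:volume}. The only care points are the choice of $\eta$ compatible with $a>1$ and the constant bookkeeping on the exceptional event.
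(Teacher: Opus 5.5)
Your proof is correct and follows the paper's argument: the pointwise bound $Z_{n,m}(\beta)\le e^{-\beta\min H_W}$ for the lower inequality, then Proposition~\ref{geo:volume} applied at a minimizing direction with the same choice $\eta=\min\{\delta/4,(a-1)/2,1/8\}$, then a split of the expectation over the exceptional event. The one difference is on the exceptional event. The paper uses Jensen's inequality, $-\log Z_{n,m}(\beta)/(\beta n)\le m/(2n)\le A/2$ (each row is positive on half the sphere), which gives $2Ae^{-3n}$ directly, while you obtain $4Ae^{-3n}$ and absorb the excess into $\delta$ for large $n$; that absorption is also valid.
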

\begin{proof}
Let $T_{n,m}(\beta)=-\log Z_{n,m}(\beta)/(\beta n)$. Directly from the
partition function, $T_{n,m}(\beta)\geq E_{n,m}$. Jensen's inequality
and the fact that each nonzero row is positive on half the sphere give
$T_{n,m}(\beta)\leq m/(2n)$. Apply Proposition~\ref{geo:volume} with
$\eta=\min\{\delta/4,(a-1)/2,1/8\}$ at a minimizing direction.
For $n\geq\eta^{-1}$ we have
$\lceil(1+\eta)n\rceil\leq(1+2\eta)n\leq an$, so its row-count
hypotheses hold.
On its event, integration over the cap gives
$T_{n,m}(\beta)\leq E_{n,m}+\delta+C_{a,A,\delta}/\beta$. On the exceptional
event the difference is at most $A/2$. Taking expectations proves
\eqref{eq:bridge}.
\end{proof}

\section{Probability cascades and overlap arrays}
\label{sec:cascades}

We record the cascade construction and representation results used in both
pressure bounds. Throughout this section, $(q,x)$ is a finite profile as in
\eqref{eq:lambdas}--\eqref{eq:finite-functional}, of depth $k$ and with
masses $\pi_i=x_{i+1}-x_i$.

\begin{definition}[Finite probability cascade]\label{rpc:construction}
For $i=1,\ldots,k$, at every vertex of depth $i-1$, independently place
a Poisson process on $(0,\infty)$ with intensity
$x_i t^{-1-x_i}\,\dd t$, enumerating its points in decreasing order.
For a leaf
$a=(a_1,\ldots,a_k)\in\mathbb N^k$, let $\xi_a$ be the product of the
points on its ancestral path, and put
\[
 w_a=\frac{\xi_a}{\sum_b\xi_b},\qquad
 Q_{ab}=q_{|a\wedge b|}.
\]
Here $|a\wedge b|$ is the depth of the last common ancestor, so that
$Q_{aa}=q_k$.  For $k=0$ there is one leaf of weight one and $Q=q_0$.
\end{definition}

The total weight is finite and positive almost surely.  Indeed, a Poisson
sum of index $x_i$ is a positive stable variable, and its positive moments
of orders below $x_i$ are finite; the inequalities $x_{i-1}<x_i$ allow
this observation to be iterated from the leaves to the root.  If
$a^1,a^2,\ldots$ are sampled independently from $(w_a)$, the associated
unit-diagonal overlap array is
\begin{equation}\label{rpc:unitarray}
 R_{\ell\ell'}=Q_{a^\ell a^{\ell'}}\quad(\ell\ne\ell'),
 \qquad R_{\ell\ell}=1,
 \qquad \Prob(R_{12}=q_i)=\pi_i.
\end{equation}
These are the standard finite Ruelle probability cascades; see
\cite[Sections 2.2--2.4 and Theorem 2]{Arguin2007RPC}.

A Gaussian field of covariance $Q$ is obtained by assigning an independent
Gaussian of variance $q_0$ to the root and independent increments of
variance $q_i-q_{i-1}$ to vertices of depth $i$.  Denote its value at leaf
$a$ by $z_a$.  A Gaussian sequence with covariance
\eqref{rpc:unitarray} is then
\begin{equation}\label{rpc:residual}
 X^\ell=z_{a^\ell}+\sqrt{1-q_k}\,Y^\ell,
\end{equation}
where the $Y^\ell$ are independent standard Gaussians.  The residual
Gaussians are independent even when two replicas select the same leaf.

\begin{lemma}[Normalized marking recursion]\label{rpc:marking}
Decorate the cascade by a root variable and by random variables at its
vertices, independently of its Poisson points.  Conditional on the
decorations along a path, the child decorations and their descendant
subtrees are independent and identically distributed.  A leaf mark
$F_a>0$ may be any measurable function of the decorations along that
leaf's path.  On one such path set $Y_k=\log F_a$ and, successively,
\begin{equation}\label{rpc:recursion}
 Y_{i-1}=\frac1{x_i}\log\E_i\exp(x_iY_i),
 \qquad i=k,\ldots,1,
\end{equation}
where $\E_i$ integrates the depth-$i$ decoration conditionally on its
ancestors.  If these conditional moments are finite and positive and
the resulting logarithms are integrable, then
\begin{equation}\label{rpc:markingidentity}
 \E\log\sum_a w_a F_a=\E Y_0.
\end{equation}
The root is averaged by ordinary expectation.  Decorations may be
random functions evaluated at later descendant decorations, provided
the stated conditional independence holds.
\end{lemma}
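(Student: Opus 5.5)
The plan is induction on the depth $k$, peeling one level of the cascade at a time from the root downward. The basic tool is the Poisson--Dirichlet invariance: for a Poisson process $(\xi_j)$ with intensity $x t^{-1-x}\,\dd t$, $0<x<1$, and i.i.d.\ positive marks $U_j$ independent of it with $\E U^x<\infty$, the collection $\{(\xi_j U_j)\}$ is again a Poisson process. Its intensity is $c\,x t^{-1-x}\,\dd t$ with $c=\E U^x$. Equivalently, $\{\xi_jU_j\}$ has the law of $\{(\E U^x)^{1/x}\xi_j\}$, and the marks reweighted by $\xi$ are size-biased by $U^x$. This is verified by the mapping theorem: the image of the marked process under $(t,u)\mapsto tu$ has intensity $\int x (t/u)^{-1-x}u^{-1}\,\dd t\,\dd\Prob(u)=\E U^x\cdot x t^{-1-x}\,\dd t$.

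\textbf{Step 1: unnormalized identity.} First I would show, by induction, that for $N=\sum_a \xi_a F_a$ one has $N\overset{d}{=}\Lambda\cdot\sum_a\xi_a$, with $\Lambda$ an explicit function of the root decoration. It is cleanest to prove this conditionally on the root decoration. Write $N=\sum_{j}\xi_j N_j$, where $\xi_j$ are the depth-one points and $N_j$ is the analogous sum over the subtree of child $j$, built with the Poisson points of depths $2,\ldots,k$. By the conditional i.i.d.\ hypothesis, given the root decoration the pairs (child decoration, subtree) are i.i.d.\ and independent of the depth-one points. Applying the invariance with $U_j=N_j$ requires $\E[N_j^{x_1}\mid\text{root}]$, and this is where the inductive structure enters. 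Define $\Lambda_i$ along a path by $\Lambda_k=F_a$ and $\Lambda_{i-1}=(\E_i\Lambda_i^{x_i})^{1/x_i}$ (the same as \eqref{rpc:recursion} with $Y_i=\log\Lambda_i$ formally, but see below). The claim to prove is
\[
 \sum_{b\text{ below }v}\xi_b F_b \overset{d}{=} \Lambda_{i}(v)\,\Sigma_v
\]
for every vertex $v$ at depth $i$, where $\Sigma_v$ is the corresponding total weight with $F\equiv1$ and is independent of $\Lambda_i(v)$.

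\textbf{Step 2: avoiding the mismatch.} There is a subtlety here. With $Y_i$ defined via $\exp(x_iY_i)$ and $Y_{i-1}=x_i^{-1}\log\E_i e^{x_iY_i}$, we get $e^{Y_{i-1}}=(\E_i e^{x_iY_i})^{1/x_i}$, which matches $\Lambda_{i-1}$ only if $e^{Y_i}$ plays the role of $\Lambda_i$. It does at the last level. At intermediate levels, however, the invariance gives the $x_i$-moment of $N_v=\Lambda_i\Sigma_v$, and since $\Sigma_v$ is not constant this factor does not drop out. To handle this, I would run the induction on the normalized quantity instead. The key fact is that the reweighting $\xi_j N_j/\sum$ size-biases the children by $N_j^{x}$ and produces a fresh Poisson process. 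This yields the joint identity
\[
 \Big(\sum_j\xi_jN_j,\ \text{weights}\Big)\overset{d}{=}\Big((\E U^x)^{1/x}\textstyle\sum_j\xi_j,\ \text{weights}\Big),
\]
so that $\log\sum_a w_aF_a=\log N-\log\sum_a\xi_a$. Proceeding level by level, the normalizing sum at depth $i$ cancels between numerator and denominator, because the children's total weights $\Sigma$ enter both. The cleanest route is to use the standard "cascade of normalized weights" fact from \cite{Arguin2007RPC}: the ratio $\sum_a\xi_aF_a/\sum_a\xi_a$ depends only on the $Y$ recursion in distribution. Then I would take expectations of logs, using that $\log\sum_a\xi_a$ is integrable and that the stated moment and integrability hypotheses make the remaining terms integrable. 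Taking $\E$ over the root decoration last gives \eqref{rpc:markingidentity}.

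\textbf{Main obstacle.} The main obstacle is exactly this intermediate-level bookkeeping: making the invariance step apply to marks $U_j$ that are themselves cascade sums whose $x_{i}$-moments involve the finite $x_{i+1}$-cascade. The needed moment finiteness comes from $x_i<x_{i+1}$, by the stable-moment remark following Definition~\ref{rpc:construction}. I would first prove $k=1$ carefully and then iterate by conditioning on the depth-one layer. The extension to random functions evaluated at descendant decorations needs no change, since only the conditional independence was used.
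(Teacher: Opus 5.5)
Your Step 1 induction is the right one, and it is the paper's argument in different dress. The paper uses the Laplace functional $\E\exp(-s\sum_jt_jB_j)=\exp\{-\Gamma(1-x)s^x\E B^x\}$ where you use the mapping theorem; both say the marked sum is $(\E B^x)^{1/x}S_x$ in law. The gap is in Step 2. There you assert that the moment factor coming from the subtree total ``does not drop out.'' You then abandon the clean induction for a normalized-weights induction, a joint identity, and an appeal to a ``standard fact'' from \cite{Arguin2007RPC} that is essentially the statement being proved. As written, the inductive step is never carried out.

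The obstacle is not real. Fix a depth-$(i-1)$ vertex $u$ with children $v$. Conditionally on the decorations along the path to $v$, your hypothesis gives $N_v\overset{d}{=}\Lambda_i(v)\Sigma_v$, where $\Sigma_v$ depends only on the Poisson points. Integrating $v$'s own decoration then gives $\E[N_v^{x_i}\mid\text{path to }u]=c_i\,\E_i[\Lambda_i^{x_i}]$, with the deterministic constant $c_i=\E\Sigma_v^{x_i}$. This constant is finite because $\Sigma_v$ is a multiple of an $x_{i+1}$-stable variable and $x_i<x_{i+1}$; for $i=k$, $\Sigma_v=1$. So the marked sum below $u$ is $\Lambda_{i-1}(u)\,c_i^{1/x_i}S_{x_i}$ in law. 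The identical computation with $F\equiv1$ shows the unmarked total below $u$ is $c_i^{1/x_i}S_{x_i}$ in law. The factor $c_i$ does stay in the scale, but it is exactly the scale of the unmarked total. Your Step 1 claim therefore propagates with $\Lambda_{i-1}=e^{Y_{i-1}}$. This is the paper's remark that $\E S_{x_k}^{x_{k-1}}$ occurs in both sums and cancels in the ratio of scales.

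At the root, conditionally on the root decoration and then averaged, this yields $\E\log\sum_a\xi_aF_a=\E Y_0+\E\log\sum_a\xi_a$. Since $\log\sum_aw_aF_a=\log\sum_a\xi_aF_a-\log\sum_a\xi_a$ pointwise, the assumed integrability and linearity of expectation finish the proof. Only the two marginal laws are used, so you need no joint identity between the marked and unmarked sums and no independence between them. Your Step 2 machinery can simply be deleted and replaced by the one-line factorization above.
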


\begin{proof}
For Poisson points $(t_j)$ of intensity $x t^{-1-x}\,\dd t$ and iid
positive marks $B_j$, independent of those points, the Laplace functional
gives
\[
 \E\exp\Bigl(-s\sum_jt_jB_j\Bigr)
 =\exp\{-\Gamma(1-x)s^x\E B^x\}.
\]
Here $\Gamma$ denotes Euler's gamma function.
Thus the marked sum has the law of $(\E B^x)^{1/x}S_x$, where
$S_x=\sum_jt_j$.  Consequently the difference between its expected
logarithm and $\E\log S_x$ is $x^{-1}\log\E B^x$.

Apply this calculation conditionally, starting at depth $k$.  At the
next level, the marked subtree sum is a stable variable multiplied by
the scale furnished by \eqref{rpc:recursion}.  Its $x_{k-1}$ moment
exists because $x_{k-1}<x_k$.  The factor $\E S_{x_k}^{x_{k-1}}$
also occurs in the unmarked subtree and therefore cancels in the
ratio of scales.  Continuing in this way proves that the logarithmic
scale ratio of the full marked and unmarked sums is $Y_0$.
Taking the difference of their expected logarithms proves
\eqref{rpc:markingidentity}.  This calculation uses marginal laws
of the two sums and does not require them to be independent.
The assumed logarithmic integrability justifies each subtraction.
Bounded positive marks with bounded logarithms satisfy these
conditions.  For unbounded logarithms the conditional moments and
logarithmic integrability are checked in each application below.
\end{proof}

An \emph{overlap law} will mean the law of a weakly exchangeable,
positive semidefinite array $R=(R_{\ell\ell'})_{\ell,\ell'\geq1}$
with $R_{\ell\ell}=1$. Weak exchangeability means invariance under
simultaneous finite permutations of the replica labels, and positive
semidefiniteness is required for every finite submatrix.
We use weak convergence on the compact space
of such array laws.  For a law $\mathscr L$, write $\mathscr L(f)$ for
the integral of $f$.  Its Ghirlanda--Guerra identities are
\begin{equation}\label{rpc:GG}
 \mathscr L\bigl(f\psi(R_{1,r+1})\bigr)
 =\frac1r\mathscr L(f)\mathscr L\bigl(\psi(R_{12})\bigr)
  +\frac1r\sum_{\ell=2}^r
       \mathscr L\bigl(f\psi(R_{1\ell})\bigr),
 \qquad r\geq2,
\end{equation}
for every bounded measurable function $f$ of the first $r$ replicas'
off-diagonal overlaps and every bounded measurable $\psi$.

\begin{proposition}[Representation and finite approximation]
\label{rpc:representation}
Every overlap law has a representation
\begin{equation}\label{rpc:DS}
 R_{\ell\ell'}=h^\ell\cdot h^{\ell'}
       +\ind\{\ell=\ell'\}(1-\norm{h^\ell}^2),
\end{equation}
where, conditionally on a random probability measure $\nu$ on the unit
ball of a separable Hilbert space, the $h^\ell$ are iid with law $\nu$.
If the law satisfies \eqref{rpc:GG}, its directing measure is almost
surely supported on the sphere of squared radius
$q_* =\sup\operatorname{supp}\operatorname{Law}(R_{12})$, and its
off-diagonal overlaps are ultrametric, meaning
$R_{23}\geq\min\{R_{12},R_{13}\}$ almost surely. The entire array law is
determined by the distribution of $R_{12}$.

In particular, if $0\leq R_{12}\leq r_*<1$ almost surely and
\eqref{rpc:GG} holds, the law is the weak limit of finite cascade
laws \eqref{rpc:unitarray} with $q_k\leq r_*$.
\end{proposition}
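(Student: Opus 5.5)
This result is assembled from the standard structure theory of exchangeable overlap arrays; no new estimate is needed. The plan has three parts, carried out in order: the Dovbysh--Sudakov representation, Panchenko's consequences of \eqref{rpc:GG}, and a finite-cascade approximation.

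\emph{Representation.} The array $R$ is weakly exchangeable, positive semidefinite, and has unit diagonal. By the Dovbysh--Sudakov theorem (in the form of \cite{Panchenko2014SK}), such an array has the form \eqref{rpc:DS}: conditionally on a random measure $\nu$ on the unit ball of $\ell^2$, the $h^\ell$ are iid with law $\nu$. The diagonal correction $1-\norm{h^\ell}^2$ is nonnegative because $\nu$ lives in the unit ball. I would cite this theorem and not reprove it.

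\emph{Consequences of the identities.} Assume \eqref{rpc:GG}. First, Talagrand's positivity principle and Panchenko's argument show that $\norm{h}^2=q_*$ for $\nu$-almost every $h$, where $q_*$ is the top of the support of $\operatorname{Law}(R_{12})$. Second, Panchenko's ultrametricity theorem \cite{Panchenko2013Ultrametricity} gives $R_{23}\ge\min\{R_{12},R_{13}\}$ almost surely. Third, the identities let one build up the joint law of $(R_{\ell\ell'})_{\ell,\ell'\le r}$ recursively from the law of $R_{12}$: apply \eqref{rpc:GG} with $f$ a function of the first $r$ replicas and $\psi$ an indicator of $R_{1,r+1}$. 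Combined with ultrametricity, which fixes the remaining overlaps of replica $r+1$ once its maximal overlap and the index achieving it are known, this determines the whole array law. That is the uniqueness part; again it is standard \cite[Ch.~2]{Panchenko2014SK}.

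\emph{Finite approximation.} Suppose $R_{12}\le r_*<1$ almost surely, and let $\zeta=\operatorname{Law}(R_{12})$. I would choose finitely supported measures $\zeta_N\Rightarrow\zeta$ with support in $[0,r_*]$, for instance by rounding the quantile function of $\zeta$ down to a grid. Each $\zeta_N$ has the form $\sum_i\pi_i\delta_{q_i}$ with $q_k\le r_*$. Atoms with zero mass need to be merged, and the strict orderings $0<x_1<\cdots<x_k<1$ can be arranged by deleting empty levels. By Definition~\ref{rpc:construction} and \eqref{rpc:unitarray}, the finite cascade with this profile has overlap distribution $\zeta_N$. It satisfies \eqref{rpc:GG} by the invariance of Ruelle cascades \cite{Arguin2007RPC}.

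Passing to the limit uses the uniqueness step. The space of array laws is compact, so every subsequential weak limit of the cascade laws exists. Each such limit satisfies \eqref{rpc:GG}, since for continuous $f,\psi$ these identities pass to weak limits. Approximating bounded measurable $f,\psi$ by continuous ones handles the rest. Each limit also has $R_{12}$-marginal $\zeta$. By uniqueness, every subsequential limit equals the given law, so the whole sequence converges to it.

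The delicate point is the measurable-versus-continuous test functions in this limit passage. The identities hold for continuous test functions in the limit, and uniqueness must be derived from that class. I would handle this by noting that uniqueness only requires \eqref{rpc:GG} for continuous $\psi$ and polynomial $f$, since moments determine laws on the compact space. The extension to bounded measurable test functions then follows by a monotone class argument.
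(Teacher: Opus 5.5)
Your proposal is correct. The representation, sphere-support, ultrametricity and uniqueness steps are handled as in the paper: both simply cite the standard theorems.

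Two small points on those steps. The fact that the directing measure lies on $\{\|h\|^2=q_*\}$ is Panchenko's sphere-support theorem; Talagrand's positivity principle is a different result, giving $R_{12}\geq 0$. Also, since those theorems are stated for Gram arrays, you should say, as the paper does, that \eqref{rpc:GG} tests only off-diagonal entries. They therefore apply to $(h^\ell\cdot h^{\ell'})$, whose off-diagonal entries coincide with $R$.

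You diverge from the paper in the finite-approximation step.

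\emph{The paper's route.} It does not build cascades with quantized marginals and pass to a limit. Instead it quantizes the given array itself. Each off-diagonal entry is replaced by $\kappa_j(R_{\ell\ell'})$, where the $\kappa_j:[0,r_*]\to[0,r_*]$ are nondecreasing, take finitely many values and converge uniformly to the identity; the diagonal stays one.
\begin{itemize}
\item Exchangeability is immediate.
\item \eqref{rpc:GG} persists by composing the test functions with the Borel map $\kappa_j$.
\item Positive semidefiniteness uses ultrametricity. Each threshold relation $\{R_{\ell\ell'}\geq t\}$ is an equivalence relation, so the quantized array is a nonnegative combination of block all-ones matrices, an all-ones matrix and a nonnegative diagonal.
\end{itemize}
The finite-support case of uniqueness then identifies this array with a finite cascade. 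Weak convergence follows directly from the entrywise uniform coupling.

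\emph{What each route costs.} The paper's route uses uniqueness only for finitely many atoms and needs no compactness. It never passes the nonlinear identities through a weak limit, so the continuous-versus-measurable test issue you flag never arises.

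Your route needs three inputs:
\begin{itemize}
\item the Ghirlanda--Guerra identities for finite cascades, as an external fact;
\item closedness of the identities, including their product term, under weak convergence, followed by a monotone-class extension;
\item uniqueness for a general, possibly non-atomic $\zeta$.
\end{itemize}
In exchange, you never have to check positive semidefiniteness of a quantized array, because cascade arrays are Gram arrays by construction.

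Your compactness-plus-uniqueness argument is exactly the one the paper itself uses in Appendix~\ref{sec:completion} to construct $\mathscr R_\zeta$.
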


\begin{proof}
The representation is the Dovbysh--Sudakov theorem
\cite[Theorem 1, p.~244]{Panchenko2014SK}.  The sphere-support
conclusion follows by applying \cite[Theorem 6, p.~263]{Panchenko2014SK}
to the Gram array $(h^\ell\cdot h^{\ell'})$: its off-diagonal entries
agree with $R$, and the theorem's hypothesis (86) tests only those
entries.  The resulting Gram diagonal is the constant $q_*$.
The original array retains diagonal one through the separate correction
$1-q_*$ in \eqref{rpc:DS}; neither diagonal enters the identities.
Ultrametricity then follows from
\cite[Theorem 1]{Panchenko2013Ultrametricity}, and uniqueness from
\cite[Theorem 11, pp.~275--277]{Panchenko2014SK}.  The unit-diagonal
array formulation and its cascade identification are also stated in
\cite[Theorem 1.1 and the discussion following it]{Panchenko2022Ultrametricity}.

For completeness, the finite approximation preserves all the relevant
hypotheses.  Choose nondecreasing finite-range functions
$\kappa_j:[0,r_*]\to[0,r_*]$ converging uniformly to the identity.
Replace off-diagonal overlaps by $\kappa_j(R_{\ell\ell'})$ and retain
diagonal one.  Ultrametricity makes each threshold relation
$R_{\ell\ell'}\geq t$ an equivalence relation.  Its matrix, with
diagonal one, consists of all-one blocks and is positive semidefinite.
The transformed array is a nonnegative linear combination of these
matrices, an all-one matrix, and a nonnegative diagonal correction.
It is therefore positive semidefinite.  Exchangeability persists,
and \eqref{rpc:GG} persists by composition of its test functions.
The finite-support case of the cited uniqueness theorem identifies
this law with the finite cascade having its two-replica marginal.
Deleting levels of zero mass gives the strict inequalities in
Definition~\ref{rpc:construction}; a point mass gives $k=0$.
Uniform approximation of each off-diagonal entry proves weak
convergence of the array laws.
\end{proof}

\section{The pressure lower bound}
\label{sec:lower}

\begin{theorem}\label{lo:pressure}
For every finite $\alpha\geq0$, every $\beta>0$, and every deterministic
sequence of row counts $m_n/n\to\alpha$,
\[
 \liminf_{n\to\infty}\Phi_{n,m_n}(\beta)
 \geq P_*(\alpha,\beta).
\]
\end{theorem}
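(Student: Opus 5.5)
I will follow the projection-cavity scheme sketched in Section~\ref{sec:strategy}, working throughout with the Poissonized model. First I reduce the deterministic sequence $m_n$ to $P_n\sim\operatorname{Pois}(\alpha n)$. Adding or removing $s$ rows changes $\log Z$ by at most $\beta s$, and $|m_n-P_n|=o(n)$ in $L^1$. Hence $\frac1n\E\log Z_{n,m_n}-\frac1n\E\log Z_{n,P_n}\to0$, and it suffices to prove the bound for the Poisson model. Next I add a small isotropic Gaussian perturbation, a mixed $p$-spin-type Hamiltonian in $u$ with coefficients $s_n\sum_p 2^{-p}g_p\,H_p(u)$, $s_n=n^\gamma$ and random uniform parameters $g_p\in[1,2]$. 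It must satisfy two requirements: its normalized pressure contribution is $O(s_n^2/n)\to0$, and its restriction to a Haar-random $n$-dimensional subspace of $\R^{n+M}$ has the law of the same perturbation in dimension $n$, up to rescaling. Write $a_n$ for the resulting perturbed expected log partition function. Then $\liminf_n \Phi_{n,m_n}\geq\liminf_n a_n/n$, and by telescoping it is enough to show
\[
 \liminf_{n\to\infty}\,(a_{n+M}-a_n)\geq M\,P_*(\alpha,\beta)-\epsilon(M)
\]
for each fixed $M$, with $\epsilon(M)/M\to0$.

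For the increment I use the identity $a_{n+M}-a_n=D_{n,M}+\Psi_{\alpha M}(\nu_n)+o_n(1)$ announced in Section~\ref{sec:strategy}. I would derive it by Poisson thinning: the $\operatorname{Pois}(\alpha(n+M))$ rows split as $\operatorname{Pois}(\alpha n)$ old rows plus $\operatorname{Pois}(\alpha M)$ fresh ones. The old rows restricted to the subspace are Gaussian rows of the smaller system. The Haar average of restricted partition functions equals the full one, and Jensen makes the remainder $D_{n,M}\ge0$ a reverse relative entropy. I then rewrite both terms through the projected Gibbs density at zero. With $h=Pu$ the Gaussian projection onto $M$ coordinates, the entropy term becomes an expected log ratio of the density of $h$ under $\nu_n$ to its density under $\mu_{n+M}$. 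As $n\to\infty$ this depends only on the overlap array of replicas, once one conditions on the Gaussian cavity fields whose covariance is the overlap. The fresh-row term likewise becomes $\E\log\langle\prod_j e^{-\beta\ind\{g_j(u)>0\}}\rangle$ with Gaussian fields of covariance $R$. Together these give a functional $\Gamma_M(\mathscr L)$ of the overlap law, affine under mixtures. Lemmas~\ref{lo:cavitylimit} and~\ref{lo:continuity}, which I expect to state and prove, provide the limit and weak continuity of $\Gamma_M$. They rely on an a priori bound $R_{12}\le r_*<1$, which leaves positive residual variance, together with a weighted tail estimate for the log-density.

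Having $\liminf(a_{n+M}-a_n)\ge\inf_{\mathscr L}\Gamma_M(\mathscr L)$ over subsequential limits, I next identify the limits. Averaging over the perturbation parameters and using concentration of the perturbation energies gives the Ghirlanda--Guerra identities \eqref{rpc:GG} for almost every parameter component. Mixtures are kept as separate components, and affinity of $\Gamma_M$ is applied componentwise. Proposition~\ref{rpc:representation} then approximates each component law weakly by finite cascades with $q_k\le r_*$. On a finite cascade, Lemma~\ref{rpc:marking} evaluates the fresh-row term exactly as $M\alpha\mathcal J_\beta(q,x)$. The Gaussian density term gives $M\mathcal E(q,x)$, via the same marking recursion applied to Gaussian marks, where each level contributes $\frac1{2x_i}\log(\lambda_{i-1}/\lambda_i)$. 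Continuity then yields $\Gamma_M\ge M P_*$, with the terminal residual $v=1-q_k$ producing $\tfrac12\log v$.

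The main obstacle is the step after the Ghirlanda--Guerra identities. The cavity functional involves $\log$ of a projected density, which is unbounded, so weak convergence of the overlap laws does not immediately pass through it. An overlap bound near one must also be secured. I would first show that the Gibbs overlap stays bounded away from one with high probability, using the discrete energy to bound the Gibbs mass near a replica. Failing that, I would truncate: replace $\nu_n$ by its convolution with a small spherical mixing kernel, which costs $o(1)$ per dimension and forces residual variance. The tail of the log-density would then be controlled by Gaussian small-ball bounds as in \eqref{eq:gaussian-small-ball}.
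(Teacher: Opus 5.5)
Your plan is essentially the paper's proof. It has the same steps: Poissonization, the restriction-invariant isotropic perturbation, the cavity identity with $D_{n,M}$ written through the projected density at zero, componentwise Ghirlanda--Guerra identities kept separate under mixtures, cascade approximation, and the marking-recursion evaluation $M\mathcal P_{\alpha,\beta}(q,x)$. The amplitude $s_n=n^\gamma$ needs $\tfrac14<\gamma<\tfrac12$; the lower bound is what lets the Ghirlanda--Guerra step beat the $O(\sqrt n)$ row fluctuations.

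Your first option for the obstacle is exactly Lemma~\ref{lo:cap}. There the averaged two-replica law is compared with the uniform one at cost $e^{\alpha(e^\beta-1)n+\frac83 s_d^2}$, and the uniform overlap density $(1-q^2)^{(d-3)/2}$ beats this near $\pm1$; note the cap must be two-sided. No small-ball bound is needed for the logarithm. At finite $n$ only a lower bound is required: $\log^+L_{n,0}$ is uniformly integrable by Markov since $\E L_{n,0}=1$, and $\log^-$ passes by Fatou. Continuity on capped laws then uses the residual-variance Jensen bound $\log\Lambda_M\geq-S/(2(1-r))$. Finally, you must invoke the positivity consequence of the Ghirlanda--Guerra identities to get $R_{12}\geq0$ in each component before applying Proposition~\ref{rpc:representation}.
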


We prove the theorem with a Poisson number of rows and then transfer it
to deterministic counts.  Write
\[
 A_n=\E\log Z_{n,P_n}(\beta),\qquad
 P_n\sim\operatorname{Pois}(\alpha n).
\]
Throughout the proof $\alpha$ and $\beta$ are fixed.  Every limit involving
a cavity dimension $M$ is taken with $M$ fixed.

\subsection{An isotropic perturbation}

Let $(\omega_p)_{p\geq1}$ be independent uniform random variables on
$[0,1]$.  Independently of the rows, take independent centered Gaussian
processes $g_{p,n}$ on $S^{n-1}$ with covariance
\[
 \E g_{p,n}(u)g_{p,n}(u')=(u\cdot u')^p.
\]
They can be realized by contracting iid Gaussian $p$-tensors with
$u^{\otimes p}$.  Set
\begin{equation}\label{lo:perturbation}
 G_{n,\omega}(u)=\sum_{p\geq1}2^{-p}\omega_p g_{p,n}(u),
 \qquad C_\omega(q)=\sum_{p\geq1}4^{-p}\omega_p^2q^p,
 \qquad s_n=n^\gamma,\quad \tfrac14<\gamma<\tfrac12.
\end{equation}
This process has a continuous version.  Indeed, Gaussian comparison
bounds the expected supremum norm of $g_{p,n}$ by $C\sqrt{pn}$, and
$\sum_p2^{-p}\sqrt p<\infty$ gives uniform convergence of the series
almost surely and in expected supremum norm.  Conditional on $\omega$,
its law is orthogonally invariant; restriction to an $N$-dimensional
subspace has exactly the law of $G_{N,\omega}$.

Let $a_n(t,\omega)$ be the expected log partition with
$\operatorname{Pois}(\alpha n)$ rows and log density
$-\beta H+\sqrt t\,G_{n,\omega}$ relative to uniform spherical
measure $\mu_n$, where $H$ counts the positive row inner products.
For the resulting Gibbs measure $\nu$, brackets denote averages over
independent samples $u^1,u^2,\ldots$ from $\nu$, called replicas;
their overlaps are $R_{\ell\ell'}=u^\ell\cdot u^{\ell'}$.
Define $a_n=\E_\omega a_n(s_n^2,\omega)$.  Gaussian interpolation gives
\begin{equation}\label{lo:amplitude}
 \frac{\partial a_n(t,\omega)}{\partial t}
 =\frac12\bigl(C_\omega(1)-\E\langle C_\omega(R_{12})\rangle\bigr),
 \qquad
 0\leq\frac{\partial a_n(t,\omega)}{\partial t}\leq\frac16.
\end{equation}
For the first inequality, $C_\omega(q)\leq C_\omega(1)$ on $[-1,1]$.
For the second, each Gibbs measure satisfies
\[
 \langle R_{12}^p\rangle
 =\left\|\int u^{\otimes p}\,\dd\nu(u)\right\|^2\geq0,
 \qquad C_\omega(1)\leq\tfrac13.
\]
One may first differentiate a finite Gaussian series and then pass to
the limit using its summable covariance bound.  In particular,
\begin{equation}\label{lo:removal}
 0\leq a_n-A_n\leq \frac{s_n^2}{6}=o(n).
\end{equation}

\subsection{Restriction to a random subspace}

Fix $M\geq1$, $n\geq2$, and put $d=n+M$.  Our base measure $\nu_n$ is
the perturbed Gibbs measure on $S^{d-1}$ with
$P\sim\operatorname{Pois}(\alpha n)$ rows and perturbation
$s_dG_{d,\omega}$.  Denote its unnormalized density by $b(u)$ and its
partition function by $\widehat Z=\int b\,\dd\mu_d$.
For a Haar orthonormal $d\times n$ frame $U$, let
\[
 \widehat Z_U=\int_{S^{n-1}}b(Uz)\,\dd\mu_n(z),
 \qquad q(U)=\frac{\widehat Z_U}{\widehat Z}.
\]
Haar averaging followed by spherical averaging is uniform spherical
averaging in dimension $d$.  Thus $\int q(U)\,\dd U=1$ for every
realization of the base disorder.  The reverse relative entropy
\begin{equation}\label{lo:restrictionentropy}
 D_{n,M}=\E\int-\log q(U)\,\dd U
 =\E\log\widehat Z-\E_\omega a_n(s_d^2,\omega)\geq0
\end{equation}
is finite.  Conditional on each frame, the restricted rows are iid
standard Gaussians and the restricted perturbation has the same
coefficient vector $\omega$ and the same amplitude $s_d$.

For a random measure $\nu$ on a unit sphere, independent of the fresh
rows $g_j$, define
\begin{equation}\label{lo:rowfunctional}
 \Psi_\rho(\nu)=\E\log\int
   \exp\left\{-\beta\sum_{j=1}^{P_\rho}
                     \ind\{g_j\cdot u>0\}\right\}\,\dd\nu(u),
 \qquad P_\rho\sim\operatorname{Pois}(\rho).
\end{equation}
The expectation includes the randomness of $\nu$.  Jensen's inequality,
first in the spin and then in the fresh disorder, gives
\begin{equation}\label{lo:rowbounds}
 -\frac{\beta\rho}{2}\leq\Psi_\rho(\nu)
 \leq\rho\log\frac{1+e^{-\beta}}2.
\end{equation}
For the upper bound, condition on $P_\rho$ before applying Jensen.
Poisson superposition and \eqref{lo:restrictionentropy} give the exact
identity
\begin{equation}\label{lo:cavityidentity}
 a_{n+M}-a_n=D_{n,M}+\Psi_{\alpha M}(\nu_n)+\delta_{n,M},
 \qquad
 0\leq\delta_{n,M}\leq\frac{(n+M)^{2\gamma}-n^{2\gamma}}6=o(1).
\end{equation}
Both terms on the right use the same base measure $\nu_n$.  The error
is the difference between the two perturbation variances, as follows
directly from \eqref{lo:amplitude}.

To express $D_{n,M}$ through overlaps, take an independent standard
Gaussian $M\times d$ matrix $B$, which has full row rank almost surely.
Choose an orthonormal $d\times n$ frame $U$ for $\ker B$.  For $y\in\mathbb R^M$
with $y^{\mathsf T}(BB^{\mathsf T})^{-1}y<1$, put
\[
 t_y=B^{\mathsf T}(BB^{\mathsf T})^{-1}y,
 \qquad \rho_y=\sqrt{1-y^{\mathsf T}(BB^{\mathsf T})^{-1}y}.
\]
The spherical marginal density in $M$ orthonormal coordinates, followed
by the linear change of coordinates to $Bu$ for $u\sim\nu_n$, gives
the density
\[
 k_{\nu_n,B}(y)=
 \frac{\Gamma(d/2)\rho_y^{n-2}}
 {\pi^{M/2}\Gamma(n/2)\sqrt{\det(BB^{\mathsf T})}\,\widehat Z}
 \int_{S^{n-1}}b(t_y+\rho_yUz)\,\dd\mu_n(z).
\]
Here $\Gamma$ denotes Euler's gamma function.  This specifies a version
of the density throughout the interior of its ellipsoidal support.
Conditional on $B$, the projection of every base
row onto $\ker B$ is nonzero almost surely.  Since there are finitely
many rows, their hyperplanes intersect the kernel sphere in a set of
$\mu_n$-measure zero.  At every other $z$, all row signs in
$b(t_y+\rho_yUz)$ agree with those in $b(Uz)$ for sufficiently small
$y$, and the continuous perturbation converges to its value at $Uz$.
For each realization, $b$ is bounded on the sphere and
$0<\widehat Z<\infty$.  Dominated convergence in the displayed integral,
together with $\rho_y\to1$, therefore proves that this density is
continuous at zero almost surely.
Coarea at zero gives
\begin{equation}\label{lo:coarea}
 L_{n,0}:=(2\pi)^{M/2}k_{\nu_n,B}(0)
 =T_{d,M}(B)\,q(\ker B),\qquad
 T_{d,M}(B)=
 \frac{2^{M/2}\Gamma(d/2)}
 {\Gamma((d-M)/2)\sqrt{\det(BB^{\mathsf T})}}.
\end{equation}
The ratio $q$ depends only on the subspace, not its choice of frame.
The kernel is Haar and independent of the Gram matrix.  Gaussian
Gram--Schmidt represents $\det(BB^{\mathsf T})$ as the product of
independent chi-square variables of degrees $d,d-1,\ldots,d-M+1$.
For such a variable of degree $j$, its log expectation is
$\log2+\psi(j/2)=\log j+o(1)$, where $\psi$ is the logarithmic
derivative of $\Gamma$.  The fixed-$M$ gamma ratio in
\eqref{lo:coarea} consequently yields
\begin{equation}\label{lo:detcorrection}
 \E\log T_{d,M}\longrightarrow0,
 \qquad D_{n,M}=-\E\log L_{n,0}+\E\log T_{d,M}.
\end{equation}

For $\varepsilon>0$ introduce the smoothed likelihood
\[
 L_{n,\varepsilon}
 =\varepsilon^{-M}\int
          e^{-\norm{Bu}^2/(2\varepsilon^2)}\,\dd\nu_n(u).
\]
Gaussian integration gives
\begin{align}
 \E L_{n,\varepsilon}&=(1+\varepsilon^2)^{-M/2},
 \label{lo:firstmoment}\\
 \E L_{n,\varepsilon}L_{n,\eta}
 &=\E\left\langle
   \bigl((1+\varepsilon^2)(1+\eta^2)-R_{12}^2\bigr)^{-M/2}
                  \right\rangle .\label{lo:secondmoment}
\end{align}

\begin{lemma}[Overlap bound]\label{lo:cap}
There is $r_*<1$, depending only on $\alpha,\beta$, such that, for
every fixed $M$ and $L\geq0$,
\begin{equation}\label{lo:weightedtail}
 \E\left\langle(1-R_{12}^2)^{-L}
                    \ind\{|R_{12}|>r_*\}\right\rangle\longrightarrow0
\end{equation}
exponentially fast, uniformly in the coefficient vector $\omega$.
\end{lemma}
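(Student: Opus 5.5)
The plan is an energy--entropy bound for the set of pairs of replicas with large overlap under the perturbed Gibbs measure $\nu_n$ on $S^{d-1}$. For $r\in(0,1)$ write
\[
 \langle\ind\{|R_{12}|>r\}\rangle
 =\frac{1}{\widehat Z^2}\iint_{|u\cdot u'|>r} b(u)b(u')\,\dd\mu_d(u)\,\dd\mu_d(u').
\]
Bound the numerator pointwise with $b(u)\le e^{s_dG_{d,\omega}(u)}$, and bound the denominator below with $b(u)\geq e^{-\beta P+s_dG_{d,\omega}(u)}$. Only the Gaussian factor then remains on both sides. Directly, this gives at best a factor $e^{2\beta P}\mu_d\otimes\mu_d\{|R_{12}|>r\}\approx e^{2\beta\alpha n-n I(r)}$ times a perturbation ratio. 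The spherical cap entropy is $I(r)=-\tfrac12\log(1-r^2)$, up to $o(n)$.

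The perturbation contributes at most $e^{O(s_d\sup|G|)}=e^{O(n^{\gamma+1/2})}=e^{o(n)}$ in expectation, since $\E\sup|G_{d,\omega}|\le C\sqrt d$ uniformly in $\omega$ and $\gamma<1/2$. Gaussian concentration of the supremum lets us handle this factor on an event of probability $1-e^{-cn}$.

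The rows are handled the same way: $P\le 2\alpha n+n$ except with probability $e^{-cn}$. On the good event the inner bracket is therefore at most $\exp\{n(2\beta(2\alpha+1)+o(1)) - nI(r)\}$, including the antipodal cap counted twice. Choose $r_*$ with $I(r_*)\ge 2\beta(2\alpha+1)+2$. This $r_*$ depends only on $\alpha,\beta$. On the bad events the bracket is at most one.

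For the weight $(1-R_{12}^2)^{-L}$, use a layer-cake decomposition over dyadic shells $1-R_{12}^2\in(2^{-j-1},2^{-j}]$. Shell $j$ has uniform measure at most $e^{-n(j\log2)/2+o(n)}\cdot\mathrm{poly}(n)$ and weight at most $2^{(j+1)L}$. The same Gibbs-to-uniform comparison multiplies each shell by $e^{n(2\beta(2\alpha+1)+o(1))}$.

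Summing over $j\ge j_0(r_*)$ gives a geometric series dominated by its first term, because $n\log 2/2>L\log2$ for large $n$ with $L,M$ fixed. The result is exponentially small. The bad events contribute zero because $\ind\{|R_{12}|>r_*\}(1-R_{12}^2)^{-L}$ is integrable only in the Gibbs average. To keep this clean, bound the bad-event term by Cauchy--Schwarz: combine $\Prob(\text{bad})^{1/2}\le e^{-cn/2}$ with a crude second moment of the Gibbs average. That second moment is finite and at most $e^{Cn}$ via the same pointwise comparison with the uniform measure, where the singular weight has finite integral once $n>2L+2$. Taking $r_*$ larger absorbs the $e^{Cn}$ against $e^{-cn/2}$ where needed; alternatively, enlarge the good event.

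The main obstacle is exactly this bad-event bookkeeping. The Gibbs-to-uniform comparison costs $e^{\beta P}$ per replica, and $P$ is Poisson and unbounded. I would handle it by conditioning on $P$ and using $\E e^{2\beta P}<\infty$ with Hölder, which gives $e^{O(n)}$ absorbed by choosing $I(r_*)$ large. Uniformity in $\omega$ holds because only $C_\omega(1)\le1/3$ enters.
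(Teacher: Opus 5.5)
Your argument is correct in outline, but it takes a more laborious route than the paper. The paper never estimates $\sup|G_{d,\omega}|$ and never splits into good and bad events.

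\textbf{The paper's route.} It lower-bounds the partition function by Jensen, $\widehat Z\geq\exp(-\beta P/2+s_d\overline G)$ with $\overline G=\int G_{d,\omega}\,\dd\mu_d$. The two-replica density relative to $\mu_d^{\otimes2}$ is then at most $\exp\{\beta P+s_d(G(u)+G(u')-2\overline G)\}$. At each fixed pair $(u,u')$ this is $e^{\beta P}$ times the exponential of a single centered Gaussian of variance at most $16C_\omega(1)\leq16/3$. Fubini gives the annealed comparison $\E\langle h(R_{12})\rangle\leq e^{cn+\frac83s_d^2}\E h(Q_d)$, with $c=\alpha(e^\beta-1)$. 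This holds for every nonnegative $h$ at once, including the unbounded weight. The explicit density $c_d(1-q^2)^{(d-3)/2}$ of the uniform overlap then finishes the proof with $1-r_*^2=e^{-4(c+2)}$.

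\textbf{Your route.} Your cruder denominator bound $b\geq e^{-\beta P+s_dG}$ leaves the perturbation ratio $\iint e^{s_d(G(u)+G(u'))}/(\int e^{s_dG})^2$. That is why you need $\E\sup|G|\leq C\sqrt d$, Borell--TIS, and $s_d\sqrt d=o(n)$. All of these are available here. Your route also gives a quenched bound on a high-probability event, which the lemma does not require.

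\textbf{The bad event for $L>0$.} This is where your write-up has a genuine error. The sentence saying bad events ``contribute zero'' is false, because the weighted Gibbs average is not bounded there. Write $f(q)=(1-q^2)^{-L}\ind\{|q|>r_*\}$. Your Cauchy--Schwarz repair works only if the second-moment bound keeps the factor $(\E f(Q_d))^2$. That factor is of order $(1-r_*^2)^{n}$, up to polynomial and $L$-dependent factors. A bare $e^{Cn}$ with $C\approx\alpha(e^{4\beta}-1)$ is not beaten by $\Prob(\text{bad})^{1/2}$ unless you also raise the Poisson threshold.

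Once that factor is kept, the good/bad split is unnecessary. Independence of $P$ and $G$ gives directly $\E\langle f\rangle\leq\E e^{2\beta P}\,\E e^{4s_d\sup|G|}\,\E f(Q_d)$. This is what your last paragraph points toward, and it is the paper's argument with a supremum in place of Jensen.
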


\begin{proof}
Let $\overline G=\int G_{d,\omega}\,\dd\mu_d$.  Since each Gaussian
row is positive on half the sphere, Jensen gives
$\widehat Z\geq\exp(-\beta P/2+s_d\overline G)$.
The density of two replicas relative to $\mu_d^{\otimes2}$ is thus
bounded by
\[
 \exp\{\beta P+s_d(G(u)+G(u')-2\overline G)\}.
\]
The variance of the centered Gaussian in parentheses is at most
$16C_\omega(1)\leq16/3$.  Averaging its exponential and the Poisson
count shows, for every nonnegative $h$,
\begin{equation}\label{lo:comparisonuniform}
 \E\langle h(R_{12})\rangle
 \leq \exp\{cn+\tfrac83s_d^2\}\E h(Q_d),
 \qquad c=\alpha(e^\beta-1),
\end{equation}
where $Q_d$ is the overlap of two uniform sphere points.  Its density
is $c_d(1-q^2)^{(d-3)/2}$ with $c_d=O(\sqrt d)$.
Choose $1-r_*^2=e^{-4(c+2)}$.  For sufficiently large $n$,
$\frac83s_d^2\leq n$ and $(d-3)/2-L\geq n/4$.
The expression in \eqref{lo:weightedtail} is therefore at most
$2c_d e^{(c+1)n}(1-r_*^2)^{n/4}\leq2c_d e^{-n}$.
\end{proof}

For large $n$, Lemma~\ref{lo:cap} makes
$\E\langle(1-R_{12}^2)^{-M/2}\rangle$ finite.  Formula
\eqref{lo:secondmoment} then shows that $L_{n,\varepsilon}$ is
Cauchy in $L^2$ as $\varepsilon\downarrow0$.  Density continuity
identifies this limit with $L_{n,0}$ in \eqref{lo:coarea}.
In particular $\E L_{n,0}=1$.  Expanding the squared difference by
\eqref{lo:secondmoment}, using uniform convergence on $[-r_*,r_*]$
and \eqref{lo:weightedtail} on its complement, gives
\begin{equation}\label{lo:uniformsmoothing}
 \lim_{\varepsilon\downarrow0}\limsup_{n\to\infty}
       \E\abs{L_{n,\varepsilon}-L_{n,0}}^2=0.
\end{equation}

\subsection{The limiting cavity functional}

For an overlap law $\mathscr L$ with $|R_{12}|\leq r<1$, conditionally
on its array take $M$ independent centered Gaussian sequences
$(X_j^\ell)_{\ell\geq1}$ of covariance $R$.  Define
\begin{equation}\label{lo:lambdadef}
 \Lambda_{M,\varepsilon}(\mathscr L)
 =L^2\!\operatorname{-lim}_{K\to\infty}
       \frac1K\sum_{\ell=1}^K\varepsilon^{-M}
       \exp\left\{-\frac1{2\varepsilon^2}
                         \sum_{j=1}^M(X_j^\ell)^2\right\}.
\end{equation}
The summands are bounded and exchangeable.  Expanding the square of
two empirical averages proves the $L^2$ Cauchy property: terms with
distinct replica indices cancel, and the remaining contribution is
$O(K^{-1}+K'^{-1})$.  Its moments are
\begin{equation}\label{lo:detmoments}
 \E\Lambda_{M,\varepsilon}^{\,b}
 =\E\det(R_{[b]}+\varepsilon^2 I_b)^{-M/2},
 \qquad b\geq1.
\end{equation}
Here $R_{[b]}=(R_{\ell\ell'})_{1\leq\ell,\ell'\leq b}$ and $I_b$
is the $b\times b$ identity matrix. The expectation on the right of
\eqref{lo:detmoments} is over the overlap-array law $\mathscr L$.
The cross second moment is \eqref{lo:secondmoment} with expectation
under $\mathscr L$.  The overlap cap hence permits the definition
\[
 \Lambda_M(\mathscr L)
 =L^2\!\operatorname{-lim}_{\varepsilon\downarrow0}
                          \Lambda_{M,\varepsilon}(\mathscr L),
 \qquad \E\Lambda_M=1.
\]

Fresh-row partitions have the parallel definition
\begin{equation}\label{lo:freshlimit}
 Z_b(\mathscr L)=L^2\!\operatorname{-lim}_{K\to\infty}
 \frac1K\sum_{\ell=1}^K
     \exp\left\{-\beta\sum_{j=1}^b\ind\{X_j^\ell>0\}\right\},
 \qquad
 \Psi_\rho(\mathscr L)=\E\log Z_{P_\rho}(\mathscr L).
\end{equation}
Here $b$ Gaussian sequences are used and $Z_0=1$.  Since
$e^{-\beta b}\leq Z_b\leq1$, all these logarithms are integrable
and \eqref{lo:rowbounds} holds.  Set
\begin{equation}\label{lo:functional}
 \mathcal F_M(\mathscr L)
 =-\E\log\Lambda_M(\mathscr L)+\Psi_{\alpha M}(\mathscr L).
\end{equation}
Initially the first term is understood as an extended real number.

\begin{lemma}[Cavity limit]\label{lo:cavitylimit}
Suppose the averaged overlap laws of the base measures $\nu_n$
converge along a subsequence to $\mathscr L$.  Then
\begin{equation}\label{lo:subsequential}
 \liminf_n(a_{n+M}-a_n)\geq\mathcal F_M(\mathscr L)
\end{equation}
along that subsequence.
\end{lemma}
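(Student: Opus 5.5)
Start from the exact increment identity \eqref{lo:cavityidentity}, $a_{n+M}-a_n=D_{n,M}+\Psi_{\alpha M}(\nu_n)+\delta_{n,M}$, where $\delta_{n,M}\geq0$. Since $\delta_{n,M}\to0$, it suffices to show, along the given subsequence,
\[
 \liminf_n D_{n,M}\geq-\E\log\Lambda_M(\mathscr L)
 \qquad\text{and}\qquad
 \lim_n\Psi_{\alpha M}(\nu_n)=\Psi_{\alpha M}(\mathscr L).
\]
By \eqref{lo:detcorrection}, $D_{n,M}=-\E\log L_{n,0}+o(1)$. The first goal is therefore an upper bound $\limsup_n\E\log L_{n,0}\leq\E\log\Lambda_M(\mathscr L)$. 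This is the only place where an inequality, rather than continuity, is needed.

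\emph{Fresh-row term.} Conditionally on the base disorder, the fresh-row partition function $\int\exp\{-\beta\sum_j\ind\{g_j\cdot u>0\}\}\,\dd\nu_n(u)$ lies in $[e^{-\beta b},1]$ for $b=P_{\alpha M}$ rows. Its moments of every order $p$ are polynomial-type expressions in the Gibbs overlaps. Replicate $p$ times and average over the fresh Gaussian rows. Each moment becomes $\E\langle F(R_{[pb]})\rangle$, where $F$ is the probability that a centered Gaussian vector with covariance given by the replica overlap matrix has prescribed sign patterns. The function $F$ is bounded but discontinuous only where the covariance degenerates. Lemma~\ref{lo:cap} and the cap $r_*<1$ keep all off-diagonal overlaps away from $\pm1$ with overwhelming probability, so $F$ is continuous on the relevant set. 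Weak convergence of the overlap laws then gives convergence of all moments to those of $Z_b(\mathscr L)$ in \eqref{lo:freshlimit}. On $[e^{-\beta b},1]$, $\log$ is uniformly approximable by polynomials, so the $b$-conditional expectations converge. Summing against the Poisson weights, with $b\leq$ a truncation and tails controlled by \eqref{lo:rowbounds}, yields convergence of $\Psi_{\alpha M}(\nu_n)$.

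\emph{Restriction term.} First fix $\varepsilon>0$. By \eqref{lo:secondmoment} and its higher analogues, the $b$-th moments of $L_{n,\varepsilon}$ equal $\E\langle\det(R_{[b]}+\varepsilon^2I_b)^{-M/2}\rangle$. These are bounded continuous functions of the overlaps, by positive semidefiniteness. Hence they converge to \eqref{lo:detmoments}. Since $L_{n,\varepsilon}\leq\varepsilon^{-M}$, this gives convergence in law of $L_{n,\varepsilon}$ to $\Lambda_{M,\varepsilon}(\mathscr L)$. For $\tau>0$, the function $\log(x+\tau)$ is bounded continuous on $[0,\varepsilon^{-M}]$, so $\E\log(L_{n,\varepsilon}+\tau)\to\E\log(\Lambda_{M,\varepsilon}+\tau)$. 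Next pass $\varepsilon\downarrow0$ using \eqref{lo:uniformsmoothing} and its limiting counterpart $\Lambda_{M,\varepsilon}\to\Lambda_M$ in $L^2$. Because $\log(x+\tau)$ is $\tau^{-1}$-Lipschitz on $[0,\infty)$,
\[
 \limsup_n\E\log(L_{n,0}+\tau)\leq\E\log(\Lambda_M+\tau)
 +\tau^{-1}\limsup_\varepsilon\Bigl(\limsup_n\norm{L_{n,\varepsilon}-L_{n,0}}_2+\norm{\Lambda_{M,\varepsilon}-\Lambda_M}_2\Bigr).
\]
The bracket vanishes as $\varepsilon\downarrow0$. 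Finally, use $\log L_{n,0}\leq\log(L_{n,0}+\tau)$ and let $\tau\downarrow0$ by monotone convergence on the right. This gives $\limsup_n\E\log L_{n,0}\leq\E\log\Lambda_M(\mathscr L)$, where the right side may be $-\infty$. The extended-real convention in \eqref{lo:functional} absorbs that case, which makes \eqref{lo:subsequential} trivial.

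\emph{Expected obstacle.} The main difficulty is the lower tail of $L_{n,0}$ near zero: $-\log$ is unbounded there, and only an upper bound on $\E\log$ survives weak limits. This is why the argument yields only a $\liminf$ inequality, via the truncation $\log(x+\tau)$, rather than convergence. The $\varepsilon$-uniform $L^2$ control \eqref{lo:uniformsmoothing} is essential for exchanging the limits $n\to\infty$ and $\varepsilon\downarrow0$. The weighted tail \eqref{lo:weightedtail} must be used once more to check that the limiting law $\mathscr L$ itself satisfies $|R_{12}|\leq r_*$. That holds because the averaged overlap laws put vanishing mass on $\{|R_{12}|>r_*\}$, and this closed-set condition passes to weak limits. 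It ensures that $\Lambda_M(\mathscr L)$ is well defined with $\E\Lambda_M=1$.
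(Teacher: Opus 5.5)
Your proposal is correct and follows the paper's proof. It uses the same reduction through \eqref{lo:cavityidentity} and \eqref{lo:detcorrection}, the same moment convergence for $L_{n,\varepsilon}$ and for the fresh-row partitions on $[e^{-\beta b},1]$, and the same use of \eqref{lo:uniformsmoothing} to remove the smoothing. The only technical variation is how you pass the logarithm to the limit. The paper first shows $L_{n,0}\Rightarrow\Lambda_M(\mathscr L)$, then combines uniform integrability of $\log^+L_{n,0}$ (mean one and \eqref{lo:logpositive}) with Fatou's lemma for $\log^-$. You instead use the $\tau^{-1}$-Lipschitz truncation $\log(x+\tau)$, the two $L^2$ approximations, and monotone convergence in $\tau$ (using $\E\log(\Lambda_M+1)\leq\log2$). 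This gives the same upper bound on $\E\log L_{n,0}$ without a separate uniform-integrability step, and is equally valid.

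Two points need correcting. First, in the fresh-row term your stated reason for continuity is not valid. A bound $r_*$ on pairwise overlaps does not make a unit-diagonal Gram matrix nonsingular: three coplanar unit vectors at mutual overlap $-1/2$ give a singular one. Nonsingularity is also not needed. The map $R\mapsto\E\prod_{\ell}e^{-\beta\ind\{X^\ell>0\}}$, with $X\sim N(0,R)$, is continuous at every positive semidefinite $R$ with unit diagonal. Covariance convergence gives convergence in law, and the discontinuity set $\bigcup_\ell\{x^\ell=0\}$ is null because each $X^\ell$ has variance one. This is the paper's argument, and the cap plays no role in this term.

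Second, if $\E\log\Lambda_M(\mathscr L)=-\infty$, then $\mathcal F_M(\mathscr L)=+\infty$ and \eqref{lo:subsequential} is not trivial. It does follow from your bound, which then forces $D_{n,M}\to+\infty$ along the subsequence. Under the cap this case is excluded anyway by \eqref{lo:lognegative}.
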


\begin{proof}
Lemma~\ref{lo:cap} gives $|R_{12}|\leq r_*$ under $\mathscr L$.
For each positive $\varepsilon$, \eqref{lo:detmoments} and its
finite-dimensional analogue imply convergence of every moment of
$L_{n,\varepsilon}$ to that of $\Lambda_{M,\varepsilon}$.
The variables lie in $[0,\varepsilon^{-M}]$, so moment convergence
implies convergence in distribution.  Formula
\eqref{lo:uniformsmoothing} and its capped-law counterpart imply
\begin{equation}\label{lo:densityweak}
 L_{n,0}\ \Longrightarrow\ \Lambda_M(\mathscr L).
\end{equation}
Write $\log^+v=\max\{\log v,0\}$ and
$\log^-v=\max\{-\log v,0\}$, allowing $\log^-0=+\infty$.
For any nonnegative mean-one variable $V$, integration of the Markov
tail gives
\begin{equation}\label{lo:logpositive}
 \E[\log^+V\,\ind\{\log^+V>t\}]\leq(t+1)e^{-t}.
\end{equation}
Thus the positive logarithms in \eqref{lo:densityweak} are uniformly
integrable.  Fatou's lemma for the nonnegative negative-logarithm
part gives the signed inequality
\begin{equation}\label{lo:logfatou}
 \liminf_n[-\E\log L_{n,0}]
 \geq-\E\log\Lambda_M(\mathscr L).
\end{equation}

For a fixed fresh-row count $b$, every moment of its partition is a
bounded Gaussian sign integral depending on a finite covariance
matrix.  These integrals are continuous even at singular matrices:
covariance convergence gives convergence of the Gaussian vectors,
and their sign discontinuities have probability zero because each
coordinate has variance one.  Moment convergence on
$[e^{-\beta b},1]$ therefore gives convergence of the expected
logarithm.  The bound $|\log Z_b|\leq\beta b$ allows uniform
truncation of the Poisson count and proves
$\Psi_{\alpha M}(\nu_n)\to\Psi_{\alpha M}(\mathscr L)$.
Now combine \eqref{lo:cavityidentity}, \eqref{lo:detcorrection},
and \eqref{lo:logfatou}.
\end{proof}

\begin{lemma}[Continuity under an overlap cap]\label{lo:continuity}
On the set of overlap laws with $|R_{12}|\leq r<1$,
$\mathcal F_M$ is finite, continuous, and affine under mixtures of
array laws.  Moreover,
\begin{equation}\label{lo:lognegative}
 \E\log^-\Lambda_M\leq\frac{Mr}{2(1-r)},\qquad
 \E(\log^-\Lambda_M)^2
       \leq\frac{M(M+2)r^2}{4(1-r)^2}.
\end{equation}
\end{lemma}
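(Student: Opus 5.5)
We treat the two terms of $\mathcal F_M$ separately. Throughout, $\mathscr L$ is an overlap law with $|R_{12}|\leq r<1$, and we write $R_{[b]}$ for the $b\times b$ principal block.

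\emph{The fresh-row term.} This term is bounded and continuous. We have $|\Psi_{\alpha M}|\leq\beta\alpha M$, and for each fixed $b$ the moments of $Z_b(\mathscr L)$ are Gaussian sign integrals. These depend continuously on finitely many overlap entries, by the argument already given in the proof of Lemma~\ref{lo:cavitylimit}. Since $Z_b\in[e^{-\beta b},1]$, moment convergence gives convergence of $\E\log Z_b$, and truncating the Poisson count finishes this part.

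\emph{Affinity.} Affinity of both terms under mixtures is immediate. $\Lambda_M$ and $Z_b$ are defined as $L^2$ limits of empirical replica averages, which are functions of a single sampled array. So under a mixture of laws they are realized componentwise, and expectations of their logarithms are linear in the mixing measure.

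\emph{The density term: the bound \eqref{lo:lognegative}.} The main work is proving \eqref{lo:lognegative}. Continuity of $-\E\log\Lambda_M$ then follows: $\Lambda_{M,\varepsilon}\to\Lambda_M$ in $L^2$ uniformly over capped laws, because the $L^2$ error is expressed through \eqref{lo:secondmoment} by a function of $R_{12}$ that converges uniformly on $[-r,r]$. Combined with moment continuity at fixed $\varepsilon$, this gives weak continuity of $\mathscr L\mapsto\operatorname{Law}(\Lambda_M)$. The positive-log part is uniformly integrable by \eqref{lo:logpositive}, since $\E\Lambda_M=1$. The negative-log part is uniformly integrable by the uniform second-moment bound in \eqref{lo:lognegative}. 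Hence $\E\log\Lambda_M$ is finite and continuous.

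To prove \eqref{lo:lognegative}, represent $\Lambda_M$ as a Gaussian density at zero averaged over the directing measure. Conditionally on $\nu$ from \eqref{rpc:DS}, write $X^\ell_j=h^\ell\cdot\xi_j+\sqrt{1-\norm{h^\ell}^2}\,Y^\ell_j$, with a common Gaussian process $\xi_j$ and independent $Y$'s. Then, with $v=1-\norm h^2$,
\[
 \Lambda_M=\int \prod_{j=1}^M v^{-1/2}\exp\Bigl\{-\frac{(h\cdot\xi_j)^2}{2v}\Bigr\}\,\nu(\dd h).
\]
Under the cap, $\norm h^2\leq r$ $\nu$-a.s., which follows from $R_{12}=h^1\cdot h^2$ and $\operatorname{supp}$ considerations. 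In particular $v\geq1-r$. Jensen's inequality for $-\log$ over $\nu$ gives
\[
 \log^-\Lambda_M\leq\int\sum_j\frac{(h\cdot\xi_j)^2}{2v}\,\nu(\dd h)+\frac M2\log\frac1{1-r}\cdot 0,
\]
where the $v^{-1/2}\geq1$ factor only helps. Taking expectations, $\E(h\cdot\xi_j)^2=\norm h^2\leq r$, which yields $Mr/(2(1-r))$. Squaring and using Gaussian fourth moments, $\E(h\cdot\xi)^2(h'\cdot\xi)^2\leq3r^2$ summed over $j,j'$, gives at most $M(M+2)r^2/(4(1-r)^2)$.

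\emph{Main obstacle.} I expect the hardest step to be justifying that this representation coincides with the $L^2$ limit defining $\Lambda_M$. I would first check it at fixed $\varepsilon$ by the law of large numbers over replicas, conditional on $(\nu,\xi)$, and then let $\varepsilon\downarrow0$ using the uniform lower bound $v\geq1-r$ to dominate the Gaussian smoothing.
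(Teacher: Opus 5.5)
Your proposal is correct and follows essentially the same route as the paper: the directing-measure representation of $\Lambda_M$ with $\norm{h}^2\leq r$ $\nu$-a.s., Jensen after discarding the factor $v^{-M/2}\geq1$, the fixed-$\varepsilon$ law-of-large-numbers identification followed by dominated convergence using $v\geq1-r$, uniform $L^2$ smoothing through \eqref{lo:secondmoment} together with fixed-$\varepsilon$ moment continuity, uniform integrability of both logarithm parts, and componentwise realization for affinity.

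One bookkeeping point in your second-moment bound: the pairs $j\neq j'$ contribute only $\norm{h}^2\norm{h'}^2\leq r^2$ by independence. Bounding every pair by $3r^2$ would give only $3M^2r^2$, so it is this independence count that yields $M(M+2)r^2$. The paper reaches the same constant differently: it applies Jensen to $S^2$ and then uses the chi-square moment $\E\bigl(\sum_jG_j(h)^2\bigr)^2=M(M+2)\norm{h}^4$.
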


\begin{proof}
Use the directing measure $\nu$ from \eqref{rpc:DS}.  The upper
overlap bound implies $\norm h^2\leq r$ for $\nu$-almost every $h$,
almost surely.  Otherwise, some set $\{\norm h^2>r+\delta\}$ would
have positive mass.  Separability would give a sufficiently small
ball of positive mass inside this set, and every pair in that ball
would have inner product greater than $r$, a contradiction.

Let $G_1,\ldots,G_M$ be independent centered Gaussian fields with
$\E G_j(h)G_j(h')=h\cdot h'$. Put $v(h)=1-\norm h^2$.  In the Gaussian sequences defining
\eqref{lo:lambdadef}, the residual variance at $h$ is $v(h)$.
The conditional law of large numbers and Gaussian integration give
\begin{align}
 \Lambda_{M,\varepsilon}
 &=\int(v(h)+\varepsilon^2)^{-M/2}
       \exp\left\{-\frac{\sum_jG_j(h)^2}
                          {2(v(h)+\varepsilon^2)}\right\}\,\dd\nu(h),
 \nonumber\\
 \Lambda_M
 &=\int v(h)^{-M/2}
       \exp\left\{-\frac{\sum_jG_j(h)^2}{2v(h)}\right\}\,\dd\nu(h)>0.
 \label{lo:directingdensity}
\end{align}
The fields can be defined $\nu$-almost everywhere through their
Gaussian series.  Since $v(h)\geq1-r$, dominated convergence identifies
the second expression with the previously defined $L^2$ limit.
Writing $S=\int\sum_jG_j(h)^2\,\dd\nu(h)$, Jensen gives
$\log\Lambda_M\geq-S/[2(1-r)]$.  Also
\[
 \E S\leq Mr,\qquad
 \E S^2\leq\E\int\Bigl(\sum_jG_j(h)^2\Bigr)^2\,\dd\nu(h)
                 \leq M(M+2)r^2,
\]
which proves \eqref{lo:lognegative}.

At fixed $\varepsilon>0$ the moment kernels
\eqref{lo:detmoments} are continuous.  The cross second-moment kernel
in \eqref{lo:secondmoment} converges uniformly down to zero noise on
$|R_{12}|\leq r$.  Thus the law of $\Lambda_M$ is continuous on this
class.  Equations \eqref{lo:lognegative} and \eqref{lo:logpositive}
give uniform integrability of its logarithm, and hence continuity of
its expected logarithm.  The fresh-row continuity argument in
Lemma~\ref{lo:cavitylimit} applies to any convergent array laws.
Finally, sampling a component law first and then its array and
Gaussian sequences realizes a mixture of the laws of both
$\Lambda_M$ and $Z_b$.  Their integrable log expectations therefore
average linearly, proving affinity.
\end{proof}

\subsection{Ghirlanda--Guerra identities with the row disorder}

We now show that the possible limiting laws in the cavity inequality
are mixtures of laws satisfying \eqref{rpc:GG}.  All disorder means
in this argument are taken at a fixed coefficient vector $\omega$.
The concentration calculation includes the Poisson rows.

Fix a power $p$, keep all coefficients except $\omega_p$ fixed, and
write $v_p=s_d2^{-p}$, $t=\omega_p$, $\theta(t)=\log\widehat Z$,
and $\psi(t)=\E\theta(t)$.  Uniformly for $t\in[-1,2]$ and the other
coefficients in $[0,1]$,
\begin{equation}\label{lo:concentration}
 \E|\theta(t)-\psi(t)|\leq B_n,
 \qquad B_n=C_{\alpha,\beta}(\sqrt n+s_d+1).
\end{equation}
Indeed, conditional on the row count, replacing one row changes
$\log\widehat Z$ by at most $\beta$, so the row variance is
$O(\beta^2P)$.  Its conditional mean is $\beta$-Lipschitz in $P$;
comparison with an independent copy of $P$ bounds the resulting
variance by $\beta^2\operatorname{Var}P$.  Gaussian Poincare bounds
the perturbation variance by $Cs_d^2$, because the squared Gaussian
Lipschitz norm of the log partition is at most
$s_d^2\sup_u\operatorname{Var}G_{d,\omega}(u)$.
Variance decomposition proves \eqref{lo:concentration}.

Differentiation and Gaussian integration by parts give
\begin{equation}\label{lo:derivatives}
 \theta'=v_p\langle g_{p,d}\rangle,\qquad
 \psi'=v_p^2t\bigl(1-\E\langle R_{12}^p\rangle\bigr),\qquad
 \psi''=v_p^2\E\langle(g_{p,d}-\langle g_{p,d}\rangle)^2\rangle.
\end{equation}
Since $0\leq\E\langle R_{12}^p\rangle\leq1$, integration of the last
identity on $[0,1]$ and Cauchy--Schwarz imply
\begin{equation}\label{lo:thermalfluctuation}
 \int_0^1\E\langle|g_{p,d}-\langle g_{p,d}\rangle|\rangle\,\dd t
 \leq1.
\end{equation}
For $0<h\leq1$, the two secant bounds for a convex derivative give
\[
 \E|\theta'(t)-\psi'(t)|
 \leq\psi'(t+h)-\psi'(t-h)+\frac{3B_n}{h}.
\]
Integrating in $t\in[0,1]$ and using
$|\psi'(t)|\leq2v_p^2$ on $[-1,2]$ yields
\[
 \int_0^1\E|\langle g_{p,d}\rangle
                     -\E\langle g_{p,d}\rangle|\,\dd t
 \leq C\left(hv_p+\frac{B_n}{hv_p}\right).
\]
For fixed $p$ and large $n$, choose $h=\sqrt{B_n}/v_p\leq1$.
Together with \eqref{lo:thermalfluctuation}, this proves
\begin{equation}\label{lo:totalfluctuation}
 \E_\omega\E\langle|g_{p,d}-\E\langle g_{p,d}\rangle|\rangle
 \leq1+C\sqrt{B_n}.
\end{equation}

Let $|f|\leq1$ depend on the overlaps of $r$ replicas.  Gaussian
integration by parts in $g_{p,d}$, subtracting the same formula for
$\E\langle g_{p,d}\rangle$, bounds the absolute defect in
\eqref{rpc:GG} with $\psi(q)=q^p$ by
\[
 \frac{\E\langle|g_{p,d}-\E\langle g_{p,d}\rangle|\rangle}
      {r v_p\omega_p},\qquad \omega_p>0.
\]
For $\omega_p\leq\eta$ bound the defect by two, and integrate the
preceding estimate on $[\eta,1]$.  Consequently
\begin{equation}\label{lo:GGdefect}
 \E_\omega\Delta_{r,p,f}(\omega)
 \leq2\eta+\frac{C(1+\sqrt{B_n})}{\eta v_p}
 \longrightarrow0
\end{equation}
by choosing $\eta\downarrow0$ sufficiently slowly.  This is possible
because $(1+\sqrt{B_n})/v_p=O(n^{1/4-\gamma})\to0$.
The calculation is the convexity perturbation argument of
\cite[Section 9]{Panchenko2014SK}; the estimates above give it here
with the full row disorder and coefficients uniform on $[0,1]$.

For each $\omega$, let $\mathscr L_{n,\omega}$ be the overlap-array
law after averaging the row and Gaussian disorder at this fixed
coefficient vector, and let $\mathfrak m_n$ be the distribution of
$\mathscr L_{n,\omega}$ when $\omega$ is sampled.  Probability laws
on the compact array-law space form a compact space.  Along any
subsequence we may therefore pass to a further subsequence on which
\[
 \mathfrak m_n\Longrightarrow\mathfrak m,
 \qquad
 \E_\omega\mathscr L_{n,\omega}
       \Longrightarrow\overline{\mathscr L}
       :=\int\mathscr L\,\dd\mathfrak m(\mathscr L).
\]
Positive semidefiniteness, exchangeability, and diagonal one are
closed conditions.  Lemma~\ref{lo:cap}, tested against continuous
functions on forbidden overlap intervals, implies $|R_{12}|\leq r_*$
under $\mathfrak m$-almost every law.
For continuous cylinder tests $f$, the absolute defect
$\Delta_{r,p,f}$ is a continuous function of the law $\mathscr L$:
its product term is the product of two scalar integrals under that
same law.  Formula \eqref{lo:GGdefect}, first for a countable dense
collection of tests, therefore implies \eqref{rpc:GG} for
$\mathfrak m$-almost every component.  Uniform approximation and
the monotone-class theorem give all bounded measurable tests.
This keeps the product of conditional expectations in the identities;
the barycenter need not itself satisfy them.

Each such component also has nonnegative overlaps, by the positivity
consequence of the exact identities
\cite[Theorem 7, p.~265]{Panchenko2014SK}.  Here is the argument for
our unit-diagonal arrays.  Fix a component $\mathscr L$, let
$a=\mathscr L(R_{12}\leq-\varepsilon)$ for $\varepsilon>0$, and let
$A_r$ be the event that $R_{ij}\leq-\varepsilon$ for every
$1\leq i<j\leq r$.  On $A_r$, all the old overlaps belong to this
interval, so \eqref{rpc:GG} and exchangeability give
\[
 \mathscr L\bigl(A_r\cap\{R_{i,r+1}>-\varepsilon\}\bigr)
 =\frac{1-a}{r}\mathscr L(A_r),\qquad 1\leq i\leq r.
\]
A union bound gives $\mathscr L(A_{r+1})\geq a\mathscr L(A_r)$,
and hence $\mathscr L(A_r)\geq a^{r-1}$.  But positive semidefiniteness
on $A_r$ would imply
$0\leq\sum_{i,j\leq r}R_{ij}\leq r-r(r-1)\varepsilon$, which is
impossible for sufficiently large $r$.  Thus $a=0$ for every
$\varepsilon>0$, and $0\leq R_{12}\leq r_*$ in almost every component.
Since every subsequential limit has this property, compactness and
the Portmanteau theorem also give, for every $\varepsilon>0$,
\begin{equation}\label{lo:positivity}
 \E_\omega\E\langle\ind\{R_{12}\leq-\varepsilon\}\rangle
 \longrightarrow0.
\end{equation}

\subsection{Evaluation on cascades and completion of the proof}

For the finite cascade with profile $(q,x)$, write $v=1-q_k$ and let
$z_{j,a}$, $1\leq j\leq M$, be independent ancestral Gaussian fields
of covariance $Q$.  Formula \eqref{rpc:residual} and Gaussian
integration in \eqref{lo:lambdadef} give
\begin{equation}\label{lo:RPClambda}
 \Lambda_M=v^{-M/2}\sum_aw_a
           \exp\left\{-\frac1{2v}\sum_{j=1}^M z_{j,a}^2\right\}.
\end{equation}
Put $\lambda_i=v+\sum_{j=i+1}^k x_j(q_j-q_{j-1})$, so
$\lambda_k=v$.  The Gaussian transform at level $i$ satisfies
\[
 \frac1{x_i}\log\E_Z
 \exp\left\{-\frac{x_i(h+\sqrt{q_i-q_{i-1}}Z)^2}{2\lambda_i}\right\}
 =-\frac1{2x_i}\log\frac{\lambda_{i-1}}{\lambda_i}
   -\frac{h^2}{2\lambda_{i-1}}.
\]
The $M$ coordinates factorize at each step of
Lemma~\ref{rpc:marking}.  Averaging the root Gaussian of variance
$q_0$ yields
\begin{equation}\label{lo:RPCentropy}
 -\frac1M\E\log\Lambda_M
 =\frac12\log v+\frac{q_0}{2\lambda_0}
    +\frac12\sum_{i=1}^k\frac1{x_i}
                         \log\frac{\lambda_{i-1}}{\lambda_i}
 =\mathcal E(q,x).
\end{equation}
The marks in \eqref{lo:RPClambda} are bounded above.  The displayed
Gaussian integral gives strictly positive, finite conditional moments,
and all resulting logarithms have at most quadratic growth in the
parent field.  Thus the marking lemma's integrability requirements
hold.

For one fresh row, integrating its independent residual Gaussian gives
the terminal function \eqref{eq:terminal}. The marking recursion is
exactly \eqref{eq:recursion}, whose root average is
$\mathcal J_\beta(q,x)$.
For $b$ fresh rows the terminal log mark is a sum of $b$ such
functions, in independent Gaussian coordinates.  Every step of
the recursion preserves this separability.  Its expected log
partition is consequently $b\mathcal J_\beta(q,x)$, and averaging
the Poisson count proves
\begin{equation}\label{lo:RPCvalue}
 \mathcal F_M(\mathscr L_{q,x})
 =M\mathcal E(q,x)+\alpha M\mathcal J_\beta(q,x)
 =M\mathcal P_{\alpha,\beta}(q,x).
\end{equation}
In particular $\mathcal E\geq0$, since $\E\Lambda_M=1$, and
\eqref{lo:rowbounds} and the trial $k=0$, $q_0=0$ give
\begin{equation}\label{lo:Pfinite}
 -\frac{\alpha\beta}{2}\leq P_*(\alpha,\beta)
 \leq\alpha\log\frac{1+e^{-\beta}}2.
\end{equation}

\begin{proof}[Proof of Theorem~\ref{lo:pressure}]
Fix $M\geq1$ and take a subsequence attaining the lower limit of
$a_{n+M}-a_n$.  The increments are bounded below by
$-\alpha\beta M/2$ by \eqref{lo:cavityidentity}.  If their lower
limit is $+\infty$ the desired increment bound is immediate;
otherwise extract a further subsequence with the law-valued limit
$\mathfrak m$ just constructed.  Lemmas~\ref{lo:cavitylimit} and
\ref{lo:continuity} imply
\[
 \liminf_n(a_{n+M}-a_n)
 \geq\mathcal F_M(\overline{\mathscr L})
 =\int\mathcal F_M(\mathscr L)\,\dd\mathfrak m(\mathscr L).
\]
For almost every component, Proposition~\ref{rpc:representation}
provides finite cascade laws $\mathscr L_j$ converging to
$\mathscr L$, all with the same cap $r_*$.  Therefore
\[
 \mathcal F_M(\mathscr L)
 =\lim_j\mathcal F_M(\mathscr L_j)
 \geq M P_*(\alpha,\beta)
\]
by continuity and \eqref{lo:RPCvalue}.  The component functionals
are integrable by \eqref{lo:lognegative} and \eqref{lo:rowbounds}.
This proves, for every fixed $M$,
\begin{equation}\label{lo:incrementbound}
 \liminf_{n\to\infty}(a_{n+M}-a_n)
 \geq M P_*(\alpha,\beta).
\end{equation}
For any $\varepsilon>0$, all sufficiently large increments in
\eqref{lo:incrementbound} are at least
$M(P_*-\varepsilon)$.  Telescope on each of the finitely many
residue classes modulo $M$ and divide by $n$.  It follows that
$\liminf_n a_n/n\geq P_*$.  Equation \eqref{lo:removal}
removes the perturbation after normalization.

Finally couple $P_n$ and the deterministic $m_n$ using one infinite
sequence of Gaussian rows.  Each added or deleted row changes the
log partition by at most $\beta$, whence
\begin{equation}\label{lo:poissontransfer}
 \left|\E\log Z_{n,m_n}(\beta)-A_n\right|
 \leq\beta\E|P_n-m_n|
 \leq\beta\bigl(\sqrt{\alpha n}+|m_n-\alpha n|\bigr)=o(n).
\end{equation}
Dividing by $n$ proves the theorem.
\end{proof}

\section{The pressure upper bound}\label{up:section}

\begin{theorem}\label{up:pressure}
For every $\beta>0$, every finite profile $(q,x)$, and every sequence
$m_n/n\to\alpha<\infty$,
\[
 \limsup_{n\to\infty}\Phi_{n,m_n}(\beta)
 \leq \mathcal P_{\alpha,\beta}(q,x).
\]
Consequently $\limsup_n\Phi_{n,m_n}(\beta)\leq P_*(\alpha,\beta)$.
\end{theorem}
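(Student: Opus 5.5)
We would prove Theorem~\ref{up:pressure} by a Guerra-type interpolation between the row model and a hierarchical Gaussian model evaluated by the cascade of Definition~\ref{rpc:construction}. We first reduce to Poisson row counts $P_n\sim\operatorname{Pois}(\alpha n)$ using the coupling bound \eqref{lo:poissontransfer}. We also reduce to a smoothed row cost: replace $\ind\{w\cdot u>0\}$ by $\ind\{w\cdot u+\sqrt\epsilon\,\xi>0\}$ with an independent residual Gaussian $\xi$, integrated out, so that each row log weight $\phi(h)=\log\E e^{-\beta\ind\{h+\sqrt\epsilon\xi>0\}}$ is smooth. Its derivative, the score, is bounded. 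The smoothing error is removed at the end as $\epsilon\downarrow0$, using the continuity of $\mathcal P_{\alpha,\beta}$ in $v$.

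For $t\in[0,1]$ and each row $j$ and leaf $a$, we use the field
\[
 \sqrt t\,\frac{w_j\cdot u}{\sqrt n}+\sqrt{1-t}\,z_{j,a}+\text{residual},
\]
with the residual variance chosen so that the total variance is constant. The spherical coordinates carry a compensating Gaussian field of covariance $(1-t)$ times the Lagrange-type profile determined by the $\lambda_i$ of \eqref{eq:lambdas}. This is the field whose cascade average reproduces the entropy $\mathcal E$ through \eqref{lo:RPCentropy}. The interpolated pressure is then
\[
 \varphi_n(t)=\tfrac1n\E\log\sum_a w_a\int e^{\sum_j\phi(\cdot)+\text{spin field}}\,\dd\mu_n .
\]
At $t=0$, Lemma~\ref{rpc:marking} together with the computations \eqref{lo:RPCentropy}--\eqref{lo:RPCvalue} shows that $\varphi_n(0)\to\mathcal P_{\alpha,\beta}(q,x)$; the spin and row parts factorize across coordinates. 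At $t=1$ we recover $\Phi$ up to $o(1)$.

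Gaussian integration by parts expresses $\varphi_n'(t)$ as a Gibbs average of terms of the form $-(R_{12}-Q_{12})C_{12}$, where $C_{12}$ is the score overlap, plus diagonal terms that cancel by the choice of the residual variance and the compensating field. The required inequality is $\varphi_n'(t)\le o(1)$. For a pure Gaussian row cost, this would follow by completing a square. Here we instead need the sign statement $\E\langle (R_{12}-Q_{12})C_{12}\rangle\geq -o(1)$.

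We would obtain this sign through the perturbation scheme outlined in Section~\ref{sec:strategy}. First, we add the compensated perturbation to the spins, in the spirit of \eqref{lo:perturbation}, and a corresponding perturbation to the scores, at amplitude $s_n$; the removal cost is $o(n)$ by the analogue of \eqref{lo:removal}. The convexity argument yielding \eqref{lo:GGdefect} then gives joint Ghirlanda--Guerra identities for $(R,C,Q)$. Next, synchronization orders $R$ and $C$ as nondecreasing functions of the common cascade level, giving $\operatorname{Cov}(R,C\mid Q=q_i)\ge0$ within each limiting component law. The remaining term is the covariance of the component means across coefficient choices. We would kill it by response calibration. For this we maximize the coefficient-averaged pressure over an auxiliary spin covariance profile with a logarithmic barrier. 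At the maximizer we use convexity in the square roots of the increments, and the secant estimate $O(h+\varepsilon_n/h)$ with $h\asymp\sqrt{\varepsilon_n}$ and $\varepsilon_n=O(s_n^2/n)$. This forces the conditional spin means to agree across coefficients in the limit. An envelope argument then integrates the resulting bound in $t$ without requiring a differentiable path. We would take the limits in the order $n\to\infty$, then the barrier to $0$, then $t\uparrow1$, and finally $\epsilon\downarrow0$.

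The main obstacle is the sign of the interpolation remainder. In particular, uniform control of the mixed-component covariance term, and handling the endpoint $t\to1$ where the spherical estimate degenerates, are what the later appendices must supply. Taking the infimum over profiles then gives $\limsup_n\Phi_{n,m_n}(\beta)\le P_*(\alpha,\beta)$.
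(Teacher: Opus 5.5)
Your reduction of the sign of $\varphi_n'(t)$ to covariance statements has a genuine gap. Take a spin field of covariance $(1-t)\hat a$ for a fixed profile $\hat a$ (presumably the canonical path \eqref{ue:canonical}). That field does not contribute only diagonal terms. Together with the constant $-\frac12(1-t)\mathcal B(q,\hat a)$ needed at $t=0$, its time derivative is $\frac12\sum_i\pi_i\hat a_i(\bar r_i-q_i)$, where $\pi_i\bar r_i=\E\langle R_{12}\ind\{Q_{12}=q_i\}\rangle$. Nor does the row term reduce to covariances: with $c_i=\E[C_{12}\mid Q_{12}=q_i]$,
\[
 \E\langle(R_{12}-Q_{12})C_{12}\rangle
 =\sum_i\pi_i\bigl[\operatorname{Cov}(R,C\mid Q=q_i)+(\bar r_i-q_i)c_i\bigr],
\]
and synchronization plus calibration control only the covariance part. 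Up to perturbation errors your derivative is therefore
\[
 -\tfrac12\sum_i\pi_i\operatorname{Cov}(R,C\mid Q=q_i)+\tfrac12\sum_i\pi_i(\bar r_i-q_i)(\hat a_i-c_i).
\]
The second sum has no sign. Nothing relates $\bar r_i$ to the arbitrary level $q_i$, and forcing $\hat a_i=c_i(t)$ along the path is exactly the stationary-path hypothesis the paper avoids.

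The missing idea is that the calibrated spin control must itself be the interpolating spin field, and its stationarity is what signs the product-of-means term. The paper keeps a time-independent spin field with free profile $a$, sets $\overline V_n=\overline F_n-\frac12\mathcal B(q,a)$, and compares the envelope \eqref{up:envelope} at the two ends. Since $\partial_{a_i}\overline V_n=-\frac12\pi_i(\bar r_i-q_i)$ exactly, that term equals $\nabla_a\overline V_n\cdot c_n$. At a maximizer it becomes $-\eta\bigl(c_0/d_0+\sum_{i\ge1}(c_i-c_{i-1})/d_i\bigr)\le o_n(1)$. This uses that the conditional score means are asymptotically nonnegative and nondecreasing in $i$, the second half of \eqref{up:componentsign}, which you never invoke. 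The same maximizers give the calibration, and the envelope handles their dependence on $t$.

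The endpoints change accordingly. At $t=0$ you need Lemma~\ref{ue:spherical} uniformly over all $a$ with $a_k\le L$. The computation \eqref{lo:RPCentropy} evaluates the cavity quantity $-\E\log\Lambda_M$, not a spherical partition function with a linear tree field, so it does not give your $\varphi_n(0)$. At $t=1-\varepsilon$ you need the small-control bound \eqref{up:smallcontrol}. Finally, removing your $\epsilon$-smoothing in the finite-$n$ model requires a row-deletion comparison such as \eqref{up:stependpoint}. Continuity of $\mathcal P_{\alpha,\beta}$ in $v$ only handles the variational side.
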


Fix the profile throughout this section. Set $v=1-q_k$,
$\pi_i=x_{i+1}-x_i$, and $\alpha_n=m/n$. We use spins
$\sigma=\sqrt n\,u$ and normalized surface measure $\mu_n$ on
$\mathbb S_n=\{\sigma\in\mathbb R^n:\|\sigma\|^2=n\}$.
Here $\mu_n$ denotes the pushforward of the unit-sphere measure under
$u\mapsto\sqrt n\,u$.
For replicas indexed by $a,b$, write $R_{ab}=n^{-1}\sigma^a\cdot\sigma^b$.
Let $(w_\gamma)$ be the cascade of Definition~\ref{rpc:construction}.
Its tree kernel is $Q_{\gamma\gamma'}=q_i$ when the common ancestor
has depth $i$; in particular $Q_{\gamma\gamma}=q_k$.

\subsection{Interpolation and Gaussian perturbation}

Let $z_i(\gamma)$, $1\leq i\leq m$, be independent Gaussian tree
fields with covariance $Q$, independent of the matrix $W$ and the cascade.
For a deterministic covariance profile
$0\leq a_0\leq\cdots\leq a_k$, let $g_j(\gamma)$,
$1\leq j\leq n$, be independent tree fields with covariance
$a(Q_{\gamma\gamma'})=a_i$ at depth $i$. These fields are independent
of the rows. Write $W_i$ for the $i$th row of $W$. For the
interpolation parameter $0\leq t\leq1$, define
\begin{align}
 X_i(t,\sigma,\gamma)
 &=\sqrt t\,\frac{W_i\cdot\sigma}{\sqrt n}
       +\sqrt{1-t}\,z_i(\gamma),\nonumber\\
 U_t(h)&=\log\E\exp\{-\beta\ind\{h+\sqrt{(1-t)v}\,Z>0\}\},
       \qquad \chi_t=\partial_hU_t,\label{up:channel}\\
 F_n(t,a)&=\frac1n\E\log\sum_\gamma w_\gamma
   \int_{\mathbb S_n}
   \exp\left\{\sum_{i=1}^m U_t(X_i)
                 +\sum_{j=1}^n g_j(\gamma)\sigma_j\right\}
       \mu_n(\dd\sigma).\label{up:interpolation}
\end{align}
The score $\chi_t$ is defined for $t<1$, where $U_t$ is smooth.
Here and below $Z$ denotes a standard normal variable in a one-dimensional
Gaussian expectation. Put
\[
 \mathcal B(q,a)=a_k-\sum_{i=0}^k\pi_iq_i a_i,
 \qquad V_n(t,a)=F_n(t,a)-\tfrac12\mathcal B(q,a).
\]
The endpoint $F_n(1,0)$ is the original pressure.

Fix $\varepsilon>0$ and $A<\infty$, and restrict for now to
$0\leq t\leq1-\varepsilon$ and $m\leq An$.
All spatial derivatives of $U_t$ of a fixed order are bounded on this
interval, uniformly in $h$. Define the score overlap
\begin{equation}\label{up:score}
 C_{ab}=\frac1n\sum_{i=1}^m\chi_t(X_i^a)\chi_t(X_i^b),
 \qquad \widetilde C_{ab}=C_{ab}/B_\varepsilon,
 \qquad B_\varepsilon>1+A\sup_{t,h}|\chi_t(h)|^2.
\end{equation}
The three kernels $R,\widetilde C,Q$ are positive semidefinite, with
diagonals at most one. Since $U_t$ is nonincreasing, $C_{ab}\geq0$.

Enumerate all kernels
\begin{equation}\label{up:kernels}
 K_\ell=(\vartheta_{\ell,R}R+
          \vartheta_{\ell,C}\widetilde C+
          \vartheta_{\ell,Q}Q)^{p_\ell},
\end{equation}
where $p_\ell\geq1$ and the three nonnegative rational weights sum to
one; powers are entrywise. Choose $\tau_\ell>0$ so that
$\sum_\ell\tau_\ell^2(1+p_\ell)^4<\infty$, and let
$\boldsymbol u=(u_\ell)_{\ell\geq1}$ have independent uniform
$[0,1]$ coordinates. At fixed $\boldsymbol u$, set
\[
 D_{\boldsymbol u}=\sum_\ell\tau_\ell^2u_\ell^2K_\ell,
 \qquad s_n=n^\gamma,\quad \tfrac14<\gamma<\tfrac12.
\]
Conditional on all row disorder, comprising $W$ and the tree fields
$z_i$, let $G_{\boldsymbol u}$ be a Gaussian field with covariance
$D_{\boldsymbol u}$ and add
\begin{equation}\label{up:perturbation}
 s_nG_{\boldsymbol u}(\sigma,\gamma)
       -\frac{s_n^2}{2}D_{\boldsymbol u}
          ((\sigma,\gamma),(\sigma,\gamma))
\end{equation}
to the exponent in \eqref{up:interpolation}. Denote its expected
normalized log partition by $F_{n,\boldsymbol u}(t,a)$ and its
coefficient average by $\overline F_n(t,a)$; put
$\overline V_n=\overline F_n-\mathcal B/2$.
At fixed coefficients, $\E$ averages all other disorder.

We specify a realization of this Gaussian field that preserves the
cascade marking structure. Expanding \eqref{up:kernels} gives positive
linear combinations of $R^r\widetilde C^sQ^h$.
The first two factors are tensor Gram kernels of the spin and score
vectors. For $h\geq1$, decompose $Q^h$ into its nonnegative increments
along the tree and place independent Gaussian tensor functions at the
corresponding nodes. The term $h=0$ is a root function. A leaf evaluates
only functions on its ancestral path, at its own spin and score vector.
All Gaussian coefficients are independent of the Poisson points defining
the cascade. A root function can have descendant scores as arguments:
conditional on the function and the other root decorations, different
child subtrees are still independent and identically distributed.
The diagonal term in \eqref{up:perturbation} has the same property.
Thus Lemma~\ref{rpc:marking} applies to these marks.

For completeness, their required moments are finite. On each fixed
leaf, the spin and score vectors range over a compact finite-dimensional
set. The tensor estimate
$\|z^{\otimes p}-y^{\otimes p}\|\leq p\|z-y\|$ for
$\|z\|,\|y\|\leq1$, together with the summability imposed above,
bounds the canonical Gaussian metric by a constant times the Euclidean
metric. The field therefore has a continuous version with finite
exponential moments of its supremum on this set. The row potential is
bounded, and the strict inequalities between the cascade masses give
the lower stable moments required at preceding levels.

Two consequences will be used repeatedly. First, Gaussian variance
interpolation with the diagonal subtraction gives
\begin{equation}\label{up:cost}
 -C\frac{s_n^2}{n}\leq F_{n,\boldsymbol u}(t,a)-F_n(t,a)\leq0.
\end{equation}
Indeed, the derivative of the unnormalized expected log partition when
the added covariance is $rD_{\boldsymbol u}$ is
$-\E\langle D_{\boldsymbol u,12}\rangle/2$.
The bracket is the squared norm of the mean kernel feature and belongs
to $[0,\sup D_{\boldsymbol u,11}]$. This proof is conditional on the
rows and is uniform in $a,t,\boldsymbol u$.

Second, at every fixed coefficient vector the perturbed Gibbs measure
satisfies
\begin{equation}\label{up:Qlaw}
 \E\langle\ind\{Q_{12}=q_i\}\rangle=\pi_i.
\end{equation}
To prove this, add an independent scalar tree field with covariance
$rH(Q)$, where $H(q_i)$ is nonnegative and nondecreasing.
The marking recursion gives the exact change
$\frac r2\sum_{i=1}^k x_i[H(q_i)-H(q_{i-1})]$ in the expected log
partition. The root increment has mean zero. Gaussian differentiation
gives the same change to first order as
$\frac r2[H(q_k)-\E\langle H(Q_{12})\rangle]$.
Summation by parts and the successive threshold choices of $H$ prove
\eqref{up:Qlaw}. The argument also holds conditional on the original
common root decorations.

\subsection{Concentration and joint overlap identities}

\begin{lemma}\label{up:concentration}
For $a_k\leq L$, the perturbed unnormalized log partition $\Theta_n$
satisfies
\begin{equation}\label{up:variance}
 \operatorname{Var}(\Theta_n)
 \leq C\left(n+s_n^2+\frac{s_n^4}{n}+1\right).
\end{equation}
The constant is uniform in $0\leq t\leq1-\varepsilon$, $m\leq An$,
and $\boldsymbol u$. The same estimate holds if one coefficient is
allowed to range over $[-1,2]$.
\end{lemma}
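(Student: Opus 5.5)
We would bound the variance of $\Theta_n$, the perturbed unnormalized log partition in \eqref{up:interpolation} with \eqref{up:perturbation} added, by splitting the disorder into three independent blocks and using the law of total variance. The blocks are: the Gaussian rows $W$ together with the tree fields $z_i$; the fields $g_j$ with covariance $a(Q)$; and the perturbation field $G_{\boldsymbol u}$. The cascade weights $(w_\gamma)$ are a fourth source, and the first step is to handle them. The cleanest route is to regard them as fixed throughout, so that $\operatorname{Var}$ refers to the remaining disorder conditional on the cascade, or to note that the marking representation of Lemma~\ref{rpc:marking} applies. We would say explicitly which reading is intended, following how the lemma is later used: it feeds a convexity argument in a single coefficient, in the same way as \eqref{lo:concentration}.

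For the row block we use bounded differences. Replacing one row pair $(W_i,z_i)$ changes the exponent by at most $\sup|U_t|\le\beta$ per configuration, so it changes $\Theta_n$ by at most $\beta$. The perturbation covariance $D_{\boldsymbol u}$ also depends on the rows through the score overlap $\widetilde C$. This dependence must be handled.

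\begin{itemize}
\item Since $\widetilde C_{ab}=C_{ab}/B_\varepsilon$ and each row contributes at most $\sup|\chi_t|^2/(nB_\varepsilon)$ to $\widetilde C$, one row changes each kernel $K_\ell$ by $O(p_\ell/n)$. By the summability $\sum\tau_\ell^2(1+p_\ell)^4<\infty$, the same holds for $D_{\boldsymbol u}$.
\item The diagonal correction $\tfrac{s_n^2}{2}D_{\boldsymbol u}$ therefore moves by $O(s_n^2/n)$.
\item The field $s_nG_{\boldsymbol u}$ must be coupled across the replacement. We realize it through the tensor construction, with fixed Gaussian coefficients applied to feature vectors. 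The features move by $O(1/n)$ in norm, so the field moves by $O(s_n/n)$ times a supremum variable with bounded moments.
\end{itemize}

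The Efron--Stein inequality then gives a row contribution of order $m(\beta+s_n^2/n+s_n/n)^2\le C(n+s_n^4/n+1)$, using $m\le An$. A Poisson count, if present, is treated as in \eqref{lo:concentration}.

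For the Gaussian blocks, conditional on the rows, $\Theta_n$ is a Lipschitz function of the Gaussian coefficients. We apply the Gaussian Poincaré inequality. The squared gradient norm is bounded by the Gibbs average of the field variances: $\sum_j a_k\sigma_j^2\le La_kn$ for the $g_j$ (in fact $\le Ln$), and $s_n^2\sup D_{\boldsymbol u,11}\le Cs_n^2$ for the perturbation. Adding the pieces gives \eqref{up:variance}.

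Uniformity in $t\le1-\varepsilon$, $m\le An$ and $\boldsymbol u$ comes from the uniform bounds on $U_t$, $\chi_t$ and $B_\varepsilon$. Letting one coefficient range over $[-1,2]$ only multiplies $\sup D_{\boldsymbol u,11}$ and the Lipschitz constants by a bounded factor. Positive semidefiniteness is not needed for these estimates, but for negative $u_\ell$ the field is still defined, since only $u_\ell^2$ enters the covariance.

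The main obstacle is the row-replacement step. It requires a coupling of $G_{\boldsymbol u}$ under which changing one row moves the field only by $O(s_n/n)$ uniformly over $(\sigma,\gamma)$. We would try the tensor realization and bound the increment by $s_n$ times the operator norm of the random tensors restricted to the difference of features, using the estimate $\|z^{\otimes p}-y^{\otimes p}\|\le p\|z-y\|$. We would then control its second moment under the Gibbs measure by Cauchy--Schwarz, together with the finite exponential moments of the supremum noted earlier.
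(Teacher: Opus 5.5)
There are two genuine gaps, although your Poincar\'e bounds for the spin and feature Gaussians are fine.

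\textbf{The cascade weights.} The variance in the lemma is over all disorder, Poisson points included, and it has to be. \eqref{up:fluctuation} feeds the convexity argument that produces \eqref{up:GG}, which contains products of full expectations. Also, \eqref{up:Qlaw} holds only after the weights are averaged: for the bare cascade, $\langle\ind\{Q_{12}=q_k\}\rangle=\sum_\gamma w_\gamma^2$ is random with mean $\pi_k$. If you hold the weights fixed, the limiting components depend on them, and the common $Q$-marginal used in response calibration is lost. Lemma~\ref{rpc:marking} does not help directly, since it is an identity for expected logarithms and bounds no variance.

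The missing idea is to condition only on root decorations. Take the nested $\sigma$-fields generated by $W$ and the root row Gaussians, then also the root spin vector, then also the root tensor coefficients, and split the variance by total variance. Given all root decorations (for $k\geq1$), $\Theta_n=\log(T/S)$, where $T,S$ are the marked and unmarked root sums. The Laplace-functional computation behind Lemma~\ref{rpc:marking} shows that each is a deterministic scale times an $x_1$-stable variable. Hence $\operatorname{Var}(\log(T/S)\mid\text{root})\leq4\operatorname{Var}(\log S_{x_1})=O(1)$. This one bound absorbs the weights and every non-root row, spin and feature decoration; it is the $+1$ in \eqref{up:variance}. Poincar\'e is then needed only for the root tensor coefficients and the root spin vector, giving $Cs_n^2+nL$.

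\textbf{The row step.} Replacing row $i$ moves the kernel $\widetilde C$ by $O(1/n)$. It moves the score feature vector $(\chi_t(X_j)/\sqrt{nB_\varepsilon})_{j\leq m}$ by $O(n^{-1/2})$, not $O(1/n)$, because one coordinate of that size changes. For $p_\ell\geq2$, the field increment in your tensor coupling is a Gaussian tensor contracted with $e_i$ in one slot and spin-dependent unit vectors in the others. Its supremum over the sphere is of order $\sqrt n$, so the pathwise change of $s_nG_{\boldsymbol u}$ is $O(s_n)$ per row. Efron--Stein then returns $O(ns_n^2)$, which is too large for \eqref{up:GGerror} to vanish in the admissible range $\gamma<1/2$. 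The leafwise supremum moments used to justify the marking lemma depend on $n$, so Cauchy--Schwarz does not repair this.

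No coupling is needed if the row block is the outermost conditioning, as in the paper. In $\operatorname{Var}(\E[\Theta_n\mid\text{rows}])$ the feature Gaussians are already averaged. A replacement then changes only the base exponent, by at most $\beta$, and the covariance, by $\norm{D-D'}_\infty\leq C/n$. Interpolate $D_r=(1-r)D+rD'$ with compensator $-s_n^2D_{r,11}/2$. The feature-averaged log partition has derivative $-\tfrac{s_n^2}{2}\E_g\langle D'_{12}-D_{12}\rangle_r$, so it changes by at most $Cs_n^2/n$, uniformly in the other decorations. Bounded differences then give $Cm(\beta+Cs_n^2/n)^2\leq C(n+s_n^2+s_n^4/n)$.
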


\begin{proof}
Let $\mathcal F_{\rm row}$ be generated by $W$ and the root row Gaussians,
let $\mathcal F_{\rm spin}$ also include the root spin Gaussian vector,
and let $\mathcal F_{\rm root}$ also include the root tensor functions.
These sigma-fields are nested. With
$M_a=\E[\Theta_n\mid\mathcal F_a]$ for
$a\in\{{\rm row},{\rm spin},{\rm root}\}$, total variance gives
\begin{align*}
 \Var(\Theta_n)
 &=\E\Var(\Theta_n\mid\mathcal F_{\rm root})
   +\E\Var(M_{\rm root}\mid\mathcal F_{\rm spin})\\
 &\quad+\E\Var(M_{\rm spin}\mid\mathcal F_{\rm row})
   +\Var(M_{\rm row}).
\end{align*}
We bound the four terms in this order. If $k\geq1$, write
the normalized cascade partition as $T/S$, where $T$ and $S$ are its
marked and unmarked root sums. Conditional on the root decorations,
each has a positive $x_1$-stable law with a deterministic scale. This
follows by Poisson marking; the inner subtree totals have finite
$x_1$-moments since the next mass is larger. Independence of $T$ and
$S$ is unnecessary. If $S_x$ has Laplace transform $e^{-t^x}$, then
\[
 \E S_x^z=\frac{\Gamma(1-z/x)}{\Gamma(1-z)},\qquad z<x,
 \qquad
 \operatorname{Var}(\log S_x)=\frac{\pi^2}{6}(x^{-2}-1).
\]
The moment identity follows by integrating the Laplace transform
(and its difference from one for positive $z$); it justifies
differentiation near zero. Hence
\[
 \operatorname{Var}(\log(T/S)\mid\text{root})
       \leq4\operatorname{Var}(\log S_{x_1}).
\]
For $k=0$ this conditional variance is zero.

The squared Lipschitz constant of $M_{\rm root}$ in the root Gaussian tensor coefficients
is at most $Cs_n^2$, since all kernel diagonals are bounded.
After their average, the squared Lipschitz constant of $M_{\rm spin}$ in the
standard Gaussian coordinates of the root spin vector is at most $na_0$.
Gaussian Poincar\'e inequalities
bound these two variance contributions by $Cs_n^2+nL$.

For the last term, $M_{\rm row}$ is a function only of the independent
root row tuples $(W_i,z_i(\varnothing))$, where $\varnothing$ denotes
the tree root.
Couple all descendant row increments identically before and after a
replacement, and fix them, the cascade weights, and the entire spin
field. Write $H,H'$ for the two base exponents and $D,D'$ for their
feature covariances. Only the $i$th row changes, so
$\|H-H'\|_\infty\leq\beta$ and
\[
 \|\widetilde C-\widetilde C'\|_\infty
 \leq\frac{2\sup_{t,h}|\chi_t(h)|^2}{nB_\varepsilon},\qquad
 \|D-D'\|_\infty\leq\frac Cn.
\]
Here the norms are over all states or state pairs, respectively; the
second bound follows from $|x^p-y^p|\leq p|x-y|$ on $[-1,1]$ and
$\sum_\ell\tau_\ell^2u_\ell^2p_\ell<\infty$.
Let $\mathcal P(H,D)$ be the conditional expected log partition,
averaging all feature Gaussians while keeping these decorations fixed.
Interpolate the positive semidefinite covariance
$D_r=(1-r)D+rD'$, with compensator $-s_n^2D_{r,11}/2$ and base
exponent $H$. Conditional Gaussian differentiation gives
\[
 \frac{\dd}{\dd r}\mathcal P(H,D_r)
 =-\frac{s_n^2}{2}\E_g\langle D'_{12}-D_{12}\rangle_r,
 \qquad
 |\mathcal P(H,D')-\mathcal P(H,D)|\leq Cs_n^2/n.
\]
Changing $H$ to $H'$ at covariance $D'$ costs at most $\beta$
pointwise. The bounds are independent of the fixed descendant
increments, weights, and spin field, so averaging those decorations
gives $|M_{\rm row}-M'_{\rm row}|\leq\beta+Cs_n^2/n$.
The calculation can first be made with finite features, leaves and
sphere quadratures; Lemma~\ref{ud:limits} justifies their removal.
Bounded differences for the $m$ root tuples give
$Cm(\beta+Cs_n^2/n)^2$. Combining the conditional variance bounds proves
\eqref{up:variance}.
\end{proof}

In particular, uniformly under these conditions,
\begin{equation}\label{up:fluctuation}
 \E|\Theta_n-\E\Theta_n|\leq B_n,\qquad B_n=C\sqrt n.
\end{equation}
We next derive the full disorder overlap identities, including the
variable diagonal of the score kernel.

Fix $K=K_\ell$ and all coefficients except $u=u_\ell$, and write
$b=s_n\tau_\ell$. Let $g_K$ be its unit Gaussian field,
$\theta(u)=\Theta_n$, and $\psi(u)=\E\theta(u)$. Set
$h_u=g_K-buK_{11}$. Differentiation and Gaussian integration by parts give
\begin{equation}\label{up:GGderivatives}
 \theta'=b\langle h_u\rangle,\qquad
 \psi'=-b^2u\E\langle K_{12}\rangle,\qquad
 \theta''=b^2\operatorname{Var}_{\langle\cdot\rangle}(h_u)
                      -b^2\langle K_{11}\rangle.
\end{equation}
Thus $\theta(u)+b^2u^2/2$ is convex. Integrating the last identity and
using the middle one at $u=0,1$ yields
\[
 \int_0^1\E\operatorname{Var}_{\langle\cdot\rangle}(h_u)\,\dd u
 =-\E\langle K_{12}\rangle_{u=1}
       +\int_0^1\E\langle K_{11}\rangle\,\dd u\leq1.
\]
Here $\E\langle K_{12}\rangle\geq0$ by the Gram representation.

We include the convex derivative estimate. For a random convex function
$f$ with $\E|f-\E f|\leq B_n$ on $[-1,2]$, forward and backward
secants give, for $0<\delta\leq1$,
\[
 \E|f'(u)-(\E f)'(u)|
 \leq (\E f)'(u+\delta)-(\E f)'(u-\delta)+3B_n/\delta.
\]
Apply this to $f=\theta+b^2u^2/2$. Its expected derivative is bounded
by $Cb^2$ on $[-1,2]$. After integration over $u\in[0,1]$ and division
by $b$, the right side is at most
$C(\delta b+B_n/(\delta b))$.
For fixed $\ell$, the choice $\delta=\sqrt{B_n}/b$ is admissible
eventually because $\gamma>1/4$. The thermal variance bound and
Cauchy--Schwarz consequently give
\begin{equation}\label{up:GGfluctuation}
 \int_0^1\E\langle|h_u-\E\langle h_u\rangle|\rangle\,\dd u
       \leq C(1+\sqrt{B_n}).
\end{equation}

If $f$ is a bounded function of the off-diagonal vector overlaps of
$r$ replicas, Gaussian integration by parts cancels all self-overlaps:
\[
 \E\langle f h_u^1\rangle
 =bu\,\E\left\langle f\left(\sum_{a=2}^rK_{1a}
                                      -rK_{1,r+1}\right)\right\rangle.
\]
Subtracting the corresponding mean and using \eqref{up:GGfluctuation}
shows, for $\|f\|_\infty\leq1$, that the coefficient-averaged absolute
defect in
\begin{equation}\label{up:GG}
 \E\langle fK_{1,r+1}\rangle
 =\frac1r\E\langle f\rangle\E\langle K_{12}\rangle
       +\frac1r\sum_{a=2}^r\E\langle fK_{1a}\rangle
\end{equation}
is at most
\begin{equation}\label{up:GGerror}
 2\rho+\frac{C(1+\sqrt{B_n})}{\rho s_n\tau_\ell}.
\end{equation}
This follows by using the trivial bound on $u<\rho$ and division by
$u$ on the complement. Choosing $\rho\downarrow0$ slowly makes the
bound vanish. All products of expectations in \eqref{up:GG} are taken
at the same fixed complete coefficient vector.

Powers of nonnegative rational linear forms span all nonconstant
polynomials in $(R,\widetilde C,Q)$, and the identity for a constant
test is automatic. Polynomial approximation therefore extends the
limiting identities to every continuous function of one vector overlap.
To state the limiting conclusion correctly, let $\nu_{n,\boldsymbol u}$
be the disorder-averaged law of the entire vector array at fixed
coefficients, and let $\mathscr L_n$ be the distribution of
$\nu_{n,\boldsymbol u}$ when $\boldsymbol u$ is sampled. The bounded
array space is compact, as is its space of probability laws. Along a
subsequence $\mathscr L_n\Rightarrow\mathscr L$.
The absolute defect of each continuous finite-array test is a continuous
function of the law $\nu$, including its nonlinear product of
expectations. By \eqref{up:GGerror}, its integral under $\mathscr L$ is
zero. A countable dense family of tests, followed by uniform approximation
and the monotone class theorem, proves that $\mathscr L$-almost every
component $\nu$ satisfies the full vector Ghirlanda--Guerra identities.
Every component is weakly exchangeable, has positive semidefinite
coordinate arrays, and has $Q$-marginal \eqref{up:Qlaw}, since these
properties are closed. This assertion holds along arbitrary deterministic
sequences of bounded controls and truncated times. It is an assertion
about the component laws, rather than about their average.

\subsection{Synchronization and response calibration}

\begin{lemma}\label{up:synchronization}
Let a bounded weakly exchangeable vector array have positive semidefinite
coordinate arrays and satisfy the full vector Ghirlanda--Guerra identities.
The support of its two-replica off-diagonal distribution is totally ordered
coordinatewise. Its coordinates are consequently deterministic
nondecreasing functions of their sum.
\end{lemma}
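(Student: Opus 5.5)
This is Panchenko's synchronization theorem for multi-species arrays. The plan is to reduce it to the one-dimensional ultrametricity and uniqueness results already recorded in Proposition~\ref{rpc:representation}, applied to linear combinations of the coordinates.

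Write the vector array as $(R^{(1)},R^{(2)},R^{(3)})=(R,\widetilde C,Q)$. For nonnegative weights $\vartheta$ with $\sum_c\vartheta_c=1$, set $R^\vartheta=\sum_c\vartheta_c R^{(c)}$.

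\textbf{Step 1: the scalar arrays.} Each $R^\vartheta$ is a bounded, weakly exchangeable, positive semidefinite array. Its off-diagonal entries satisfy the scalar identities \eqref{rpc:GG}, because the vector identities hold for every continuous function of one vector overlap, in particular for $\psi(R^\vartheta_{1,r+1})$. After rescaling to unit diagonal, Proposition~\ref{rpc:representation} then gives two facts. First, every $R^\vartheta$ is ultrametric. Second, the Dovbysh--Sudakov directing measure of $R^\vartheta$ lives on a sphere.

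\textbf{Step 2: total order of the support.} Suppose two points $x,y$ of the support of the two-replica vector overlap are not coordinatewise comparable. Then there are coordinates $c,c'$ with $x_c>y_c$ and $x_{c'}<y_{c'}$.

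Following Panchenko's argument, I would use the identities to build three replicas with $R_{12}$ near $x$ and $R_{13}$ near $y$, with positive probability. This is done by sampling a new replica whose overlap with replica $1$ is drawn from the two-replica marginal, which the identities permit with probability at least $1/r$.

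Now choose $\vartheta$ concentrated near coordinate $c$. Ultrametricity of $R^\vartheta$ forces $R^\vartheta_{23}\geq\min\{R^\vartheta_{12},R^\vartheta_{13}\}$, with the analogous constraint for $\vartheta$ concentrated near $c'$. For some intermediate $\vartheta$ with $\vartheta\cdot x=\vartheta\cdot y$, the two overlaps coincide in the $\vartheta$-direction. The uniqueness part of Proposition~\ref{rpc:representation} then determines the law of the whole array from the distribution of $R^\vartheta_{12}$. The key point is that the event $\{R^\vartheta_{12}\ge t\}$ must then coincide, up to null sets, for all $\vartheta$ in an open cone.

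The cleanest route is Panchenko's: show that the sum $\sum_c R^{(c)}$ determines each coordinate. To do this, verify that equality in Talagrand's positivity/Gram bound forces $R^{(c)}_{12}$ to be a function of $R^{\mathrm{sum}}_{12}$.

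\textbf{Main obstacle.} I expect Step 2 to be the main obstacle. I would first try to transcribe Panchenko's proof of synchronization (\emph{Ann.\ Probab.} 2015, multi-species SK) directly. That proof applies the identities to $\vartheta$-mixtures and uses the fact that directing measures live on spheres of radius determined by $\sup\operatorname{supp}$, so only the bookkeeping for our nonunit diagonals of $\widetilde C$ and $Q$ needs adjustment. The diagonals do not enter \eqref{rpc:GG}, so they can simply be replaced by the respective suprema of the supports without changing anything.

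\textbf{Step 3: conclusion.} Once the support is totally ordered, the map from the support to its coordinate sum is strictly increasing, hence injective. Its inverse gives each coordinate as a nondecreasing function of the sum. Boundedness guarantees that the support is compact, so these functions are well defined.
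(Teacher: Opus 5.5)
\textbf{There is a genuine gap in Step 2.} Your Steps 1 and 3 match the paper. Every nonnegative combination of the coordinates is a scalar Ghirlanda--Guerra array whose off-diagonals have a Dovbysh--Sudakov representation, so the sphere-support theorem and Panchenko's ultrametricity theorem make it ultrametric whatever its diagonal. A totally ordered support then gives each coordinate as a nondecreasing function of the sum.

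Step 2 carries the entire content of the lemma, and you never complete it. You correctly take disjoint neighborhoods $U$ of $x$ and $V$ of $y$. The two-replica identity $\Prob(X_{12}\in U,\,X_{13}\in V)=\tfrac12\Prob(X_{12}\in U)\Prob(X_{12}\in V)>0$ then gives an event of positive probability on which $X_{12}\approx x$ and $X_{13}\approx y$. But you derive no contradiction from it.
\begin{itemize}
\item Choosing $\vartheta$ with $\vartheta\cdot x=\vartheta\cdot y$ only yields $R^\vartheta_{23}\geq\min\{R^\vartheta_{12},R^\vartheta_{13}\}$ with $R^\vartheta_{12}\approx R^\vartheta_{13}$, which contradicts nothing.
\item The appeal to uniqueness is misdirected. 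Proposition~\ref{rpc:representation} determines the law of the \emph{scalar} array $R^\vartheta$ from that of $R^\vartheta_{12}$. It says nothing about the joint law of the coordinates, which is exactly what synchronization concerns.
\item The assertion that the events $\{R^\vartheta_{12}\geq t\}$ coincide over an open cone of $\vartheta$ restates the conclusion rather than proving it.
\item The suggested route through equality in a positivity or Gram bound is not an identifiable mechanism.
\end{itemize}
As written, the key step is deferred to an unspecified transcription of Panchenko's proof.

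\textbf{The missing idea is short.} Use coordinatewise ultrametricity in its equality form: in an ultrametric triple the two largest entries are equal, so if two entries differ, the third equals the smaller. On your event, $X_{12,c}>X_{13,c}$ and $X_{12,c'}<X_{13,c'}$. Hence $X_{23,c}=X_{13,c}$ and $X_{23,c'}=X_{12,c'}$. Now take the scalar array $S=R^{(c)}+R^{(c')}$, which is ultrametric by Step 1. It satisfies $S_{23}=X_{13,c}+X_{12,c'}<\min\{S_{12},S_{13}\}$ on an event of positive probability, which is a contradiction. Any combination with positive weight on both $c$ and $c'$ works, so no balancing choice of $\vartheta$ is needed. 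This is precisely the paper's argument.
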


\begin{proof}
Every nonnegative rational combination of the coordinates is a bounded
positive semidefinite array satisfying the scalar identities.
The Dovbysh--Sudakov representation gives a random bounded Hilbert-space
measure representing its off-diagonals. The sphere-support theorem makes
the squared radius of this directing measure the deterministic top of the
overlap support. These are Theorems~1 and~6 of
\cite{Panchenko2014SK}; the identities in its equation~(86) test only
off-diagonals. Thus a variable original diagonal presents no obstruction.
After scaling into the unit ball, Theorem~1 of
\cite{Panchenko2013Ultrametricity} implies ultrametricity.

Suppose two support vectors $y,z$ cross: $y_i>z_i$ and $y_j<z_j$.
Choose disjoint neighborhoods $U,V$ preserving both strict inequalities.
Vector Ghirlanda--Guerra with two replicas gives
\[
 \Prob(X_{12}\in U,\ X_{13}\in V)
       =\tfrac12\Prob(X_{12}\in U)\Prob(X_{12}\in V)>0.
\]
On this event, coordinate ultrametricity forces
$X_{23,i}=X_{13,i}$ and $X_{23,j}=X_{12,j}$: the third overlap in an
ultrametric triple with two unequal overlaps equals their minimum.
It follows that $X_{23,i}+X_{23,j}$ is strictly smaller than both
$X_{12,i}+X_{12,j}$ and $X_{13,i}+X_{13,j}$. This contradicts
ultrametricity for the sum of coordinates $i,j$.
The support is a chain. Its coordinate sum is injective, and each
coordinate is a nondecreasing $1$-Lipschitz function of that sum on
the support; extend the functions between support points.
\end{proof}

Apply the lemma separately to each component of $\mathscr L$. Since
$Q$ takes the ordered values $q_i$ with positive probabilities $\pi_i$,
it gives
\begin{equation}\label{up:componentsign}
 \operatorname{Cov}_\nu(R_{12},C_{12}\mid Q_{12}=q_i)\geq0,
 \qquad
 0\leq\E_\nu[C_{12}\mid Q_{12}=q_0]\leq\cdots\leq
            \E_\nu[C_{12}\mid Q_{12}=q_k].
\end{equation}
For the covariance, condition on $Q=q_i$ and take two independent
copies of the scalar parameter of the chain. Twice the covariance is
the expectation of the product of the two coordinate differences,
which is nonnegative. The conditional means are ordered because every
support point at a lower $Q$ precedes every point at a higher $Q$.

We now identify controls for which this sign survives coefficient
averaging. Set $d_0=a_0$ and $d_i=a_i-a_{i-1}$ for $i\geq1$, and,
for $0<\eta\leq1$, define
\begin{equation}\label{up:envelope}
 M_{n,\eta}(t)=\max_{d_0,\ldots,d_k>0}
 \left\{\overline V_n(t,a)+\eta\sum_{i=0}^k\log d_i\right\}.
\end{equation}

\begin{lemma}\label{up:maximizers}
All maximizers in \eqref{up:envelope} satisfy $a_k\leq L$, with $L$
independent of $n,t,\eta\in(0,1]$. For each fixed $\eta>0$ they also
satisfy $d_i\geq\delta_\eta>0$, uniformly in $n,t$.
At every such maximizer, if
\[
 r_{n,i}(\boldsymbol u)=
   \frac{\E\langle R_{12}\ind\{Q_{12}=q_i\}\rangle_{\boldsymbol u}}{\pi_i},
 \qquad \overline r_{n,i}=\E_{\boldsymbol u}r_{n,i}(\boldsymbol u),
\]
then, for each fixed $\eta>0$,
\[
 \sup_{\boldsymbol u}\max_{0\leq i\leq k}
 |r_{n,i}(\boldsymbol u)-\overline r_{n,i}|
 \leq C_\eta n^{\gamma-1/2}
\]
for all sufficiently large $n$, uniformly in truncated time $t$ and in the
choice of maximizer. The constant may depend on the fixed profile,
temperature, density bound, time cutoff and $\eta$.
\end{lemma}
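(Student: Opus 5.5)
The plan has three steps: bound the maximizers a priori, compute the derivative of $\overline V_n$ in each increment, then compare that derivative across coefficient vectors.

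\emph{A priori bounds.} We first compute $\partial\overline V_n/\partial a_i$ by Gaussian integration by parts in the spin tree fields $g_j$. Increasing $a_i$ adds covariance $n\,\Delta a\,R\,\ind\{Q\geq q_i\}$, so for each fixed $\boldsymbol u$
\[
 \frac{\partial F_{n,\boldsymbol u}}{\partial a_i}
 =\frac12\Bigl(\E\langle\ind\{Q_{11}\geq q_i\}\rangle-\E\langle R_{12}\ind\{Q_{12}\geq q_i\}\rangle\Bigr)
 =\frac12\Bigl(1-\sum_{j\geq i}\pi_j r_{n,j}(\boldsymbol u)\Bigr),
\]
using \eqref{up:Qlaw}. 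The Gaussian field has constant variance $a_k n$, so the diagonal $Q_{11}=q_k$ contributes exactly $\tfrac12$. Differentiating $\mathcal B/2$ gives $\tfrac12(1-\sum_{j\geq i}\pi_jq_j)$. Writing the objective in the increments $d_i$, its derivative in $d_i$ is therefore
\[
 \tfrac12\sum_{j\geq i}\pi_j\bigl(q_j-\overline r_{n,j}\bigr)+\eta/d_i .
\]
For the bound $a_k\leq L$ we use convexity in the other direction: $F_n$ grows at most like $\tfrac12 a_k\cdot(\text{const})$, while $\mathcal B$ grows at least like $a_k(1-q_k)=a_kv$. Concretely, compare with the decoupled bound obtained from Jensen over $\sigma$, where the spin term gives at most $\tfrac12(a_k-a_0q_0\ldots)$ minus an entropic gain. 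This shows $\overline V_n\to-\infty$ as $a_k\to\infty$ uniformly in $n$, while the $\eta\log d_i$ terms are dominated. The lower bound $d_i\geq\delta_\eta$ follows from the first-order condition: the $r$-terms are bounded by $1$, so $\eta/d_i\leq C$.

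\emph{Calibration.} At a maximizer $d^*$ the first-order conditions hold exactly for the averaged function. To compare $r_{n,i}(\boldsymbol u)$ with its average, we change variables to $e_i=\sqrt{d_i}$. Gaussian interpolation shows that $F_{n,\boldsymbol u}$ is convex in $(e_i)$: the Hessian is a Gibbs variance term minus a compensator, as in \eqref{up:GGderivatives}. The next step is \eqref{up:cost}, which gives $|F_{n,\boldsymbol u}-\overline F_n|\leq Cs_n^2/n=:\varepsilon_n$ uniformly. At a maximizer, $\overline V_n+\eta\sum\log d_i$ has an upper supporting smooth function: the quadratic $-\mathcal B/2$ plus the log barrier, with curvature bounded by $C_\eta$ on $d_i\geq\delta_\eta$. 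This one-sided curvature bound on $\overline F_n$ near the maximizer is combined with the convexity of each $F_{n,\boldsymbol u}$. Forward and backward secants of length $h$ of the convex function $F_{n,\boldsymbol u}$ then pinch its derivative between secants of $\overline F_n$ up to $2\varepsilon_n/h$. The secants of $\overline F_n$ differ from its derivative by $O(C_\eta h)$, since at a maximum the concave barrier-corrected function is controlled from above and convexity controls it from below. Thus $|\partial_{e_i}F_{n,\boldsymbol u}-\partial_{e_i}\overline F_n|\leq C_\eta(h+\varepsilon_n/h)$. Choosing $h=\sqrt{\varepsilon_n}=Cn^{\gamma-1/2}$ gives derivative error $C_\eta n^{\gamma-1/2}$. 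The derivative in $e_i$ equals $2e_i$ times the derivative in $d_i$, and $e_i\geq\sqrt{\delta_\eta}$, so the partial sums $\sum_{j\geq i}\pi_j r_{n,j}(\boldsymbol u)$ agree with their averages to that order. Differencing consecutive $i$ and dividing by $\pi_i>0$ gives the claim, uniformly in $\boldsymbol u$, in $t\leq1-\varepsilon$, and in the maximizer.

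\emph{Main obstacle.} The delicate step is establishing the convexity of $F_{n,\boldsymbol u}$ in $e=\sqrt d$ uniformly. Differentiating twice in $e_i$ produces the thermal variance of the tree field minus the diagonal self-overlap term. The compensated version, which is convex, needs the field normalization $\sqrt{e_i}$ times a fixed Gaussian, so the perturbation must be realized pathwise. If that fails, the first thing to try is the weaker route of applying the secant argument to $\theta$ plus an explicit quadratic, as done before \eqref{up:GGfluctuation}. That route requires absorbing the added quadratic into the curvature constant $C_\eta$.
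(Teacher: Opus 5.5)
\textbf{Gap: the a priori bound $a_k\leq L$.} The first assertion of the lemma, $a_k\leq L$ uniformly in $n,t$ and $\eta\in(0,1]$, is not established by your sketch. This bound is also what guarantees that maximizers exist, because the barrier $\eta\sum_i\log d_i$ rewards large increments. So you need $\overline V_n$ to decay linearly in $a_k$, not merely to stay bounded.

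None of the comparisons you suggest gives this.
\begin{itemize}
\item The annealed (Jensen) bound gives only $S_n(a)\leq a_k/2$. Hence $S_n-\frac12\mathcal B(q,a)\leq\frac12\sum_j\pi_jq_ja_j$, which is nonnegative and grows.
\item A bound of the type $S_n\leq\frac12\mathcal B(q,a)+\mathcal E(q,x)+o(1)$, as in Lemma~\ref{ue:spherical}, only gives boundedness, which the barrier can beat.
\item No bound of the form $F_n\lesssim\frac c2a_k$ can be played against the crude estimate $\mathcal B\geq(1-q_k)a_k$. Restrict the spherical integral to a cap around $Z/\norm{Z}$ and use $\E e^{uZ_1}=e^{u^2/2}$ in the level-$k$ marking step. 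This shows that the spin pressure genuinely grows like $\frac12x_kd_k$ as $d_k\to\infty$. So necessarily $c\geq x_k$, and $x_k$ exceeds $1-q_k$ for many profiles.
\end{itemize}

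\textbf{The missing ingredient} is the spherical estimate \eqref{up:coercive}. Since $\norm{\sigma}=\sqrt n$, one has $g(\gamma)\cdot\sigma\leq\sqrt n\sum_i\sqrt{d_i}\,\norm{Z^{(i)}}$. The marking recursion together with $\log\E e^{u\norm{Z_n}}\leq u\sqrt n+u^2/2$ then gives $S_n(a)\leq\sum_i\sqrt{d_i}+\frac12\sum_{i\geq1}x_id_i$.

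The cascade part cancels exactly against $\mathcal B$, via $\mathcal B(q,a)-\sum_{i\geq1}x_id_i=\sum_id_i\sum_{j\geq i}\pi_j(1-q_j)\geq(1-x_k)(1-q_k)a_k$. Only after this cancellation can you afford a crude bound. With $U_t\leq0$ and \eqref{up:cost}, this yields $\overline V_n\leq\sqrt{(k+1)a_k}-\frac12(1-x_k)(1-q_k)a_k$.

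You also need a uniform lower bound on the maximal value. Take $d_i=1$ and use $U_t\geq-\beta$, $S_n\geq0$ and \eqref{up:cost}. Since $\eta\sum_i\log d_i\leq(k+1)\log\max(1,a_k)$ for $\eta\leq1$, this gives an $L$ independent of $\eta$.

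\textbf{What is sound.} Your calibration step is exactly the paper's argument. It uses:
\begin{itemize}
\item convexity of $f_{n,\boldsymbol u}$ in $b=\sqrt d$;
\item the uniform bound $\sup|f_{n,\boldsymbol u}-\overline f_n|\leq Cs_n^2/n$ from \eqref{up:cost};
\item the smooth upper support $\mathcal B/2-\eta\sum_i\log b_i^2$ at the maximizer;
\item secants of length $h=\sqrt{\epsilon_n}$;
\item the identity $\partial_{b_i}f_{n,\boldsymbol u}=b_i(1-\sum_{j\geq i}\pi_jr_{n,j}(\boldsymbol u))$;
\item differencing of tail sums.
\end{itemize}

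Your first-order-condition bound on $d_i$ is a valid and slightly more explicit alternative to the paper's barrier argument. The maximization in \eqref{up:envelope} is over the open cone, so every maximizer is stationary. Stationarity gives $\eta/d_i=\frac12\sum_{j\geq i}\pi_j(\overline r_{n,j}-q_j)\leq\frac12$, that is, $d_i\geq2\eta$.

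\textbf{Two corrections.}
\begin{itemize}
\item Your first display is the derivative in the increment $d_i$, not in $a_i$; compare \eqref{ud:spin}.
\item The convexity you list as the main obstacle is easier than you fear, and it is not of ``variance minus compensator'' type; such a Hessian would not give convexity. The spin field carries no compensator, and the score perturbation does not depend on $b$. Writing the spin field pathwise as $\sum_ib_iZ^{(i)}$ makes the exponent linear in $b$. Each samplewise log partition is then convex by H\"older, with Hessian a pure Gibbs covariance.
\end{itemize}
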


\begin{proof}
Let $S_n(a)$ denote \eqref{up:interpolation} with no row terms.
Writing the spin field as the sum of its independent tree increments,
using $\|\sigma\|=\sqrt n$, and applying the marking recursion gives
\begin{equation}\label{up:coercive}
 S_n(a)\leq\sum_{i=0}^k\sqrt{d_i}
                       +\frac12\sum_{i=1}^k x_i d_i.
\end{equation}
Indeed, $\E\|Z_n\|\leq\sqrt n$ and
$\log\E e^{u\|Z_n\|}\leq u\sqrt n+u^2/2$; the root term is
averaged directly, whereas each other increment is integrated with
mass $x_i$ and divided by $x_i$.
Moreover,
\[
 \mathcal B(q,a)-\sum_{i=1}^k x_i d_i
   =\sum_{i=0}^k d_i\sum_{j\geq i}\pi_j(1-q_j)
   \geq(1-x_k)(1-q_k)a_k,
\]
with $x_k=x_0=0$ when $k=0$.
The row potentials are nonpositive and \eqref{up:cost} is nonpositive,
so \eqref{up:coercive} implies
\[
 \overline V_n(t,a)
 \leq\sqrt{(k+1)a_k}-\tfrac12(1-x_k)(1-q_k)a_k.
\]
The logarithmic barrier preserves coercivity uniformly for
$0<\eta\leq1$. At $d_i=1$, the row bound $U_t\geq-\beta$,
the nonnegative spin pressure, and \eqref{up:cost} give a uniform
lower bound on the maximum. This proves the bound $a_k\leq L$.
For fixed $\eta$, the barrier tends to $-\infty$ as any increment
tends to zero, proving the positive lower bound on $d_i$ and existence
of an interior maximizer.

Write $b_i=\sqrt{d_i}$ and
$f_{n,\boldsymbol u}(b)=F_{n,\boldsymbol u}(t,a(b))$.
This is convex in $b$, since the spin field is linear in $b$ and
the score perturbation is independent of $b$. Equation~\eqref{up:cost}
gives
\[
 \sup_{b,\boldsymbol u}|f_{n,\boldsymbol u}(b)-\overline f_n(b)|
       \leq\epsilon_n,\qquad \epsilon_n=Cs_n^2/n\longrightarrow0.
\]
At a maximizer $b_n$, set
$G_\eta(b)=\mathcal B(q,a(b))/2-\eta\sum_i\log(b_i^2)$.
Its second derivatives are bounded on a fixed neighborhood of all
maximizers, for fixed $\eta$. Convex secants and
$\overline f_n(b_n+h e_i)-\overline f_n(b_n)
 \leq G_\eta(b_n+h e_i)-G_\eta(b_n)$ imply
\begin{equation}\label{up:responsegradient}
 |\partial_{b_i}f_{n,\boldsymbol u}(b_n)
                   -\partial_{b_i}G_\eta(b_n)|
       \leq C_\eta h+2\epsilon_n/h.
\end{equation}
Use the backward secant for the other inequality and choose
$h=\sqrt{\epsilon_n}$. The same derivative of $\overline f_n$ equals
$\partial_{b_i}G_\eta$ by stationarity.

Gaussian differentiation of the independent spin field, together with
\eqref{up:Qlaw}, gives exactly
\begin{equation}\label{up:spinderivative}
 \partial_{b_i}f_{n,\boldsymbol u}
   =b_i\left(1-\sum_{j\geq i}\pi_j r_{n,j}(\boldsymbol u)\right).
\end{equation}
Subtracting its coefficient average in \eqref{up:responsegradient} gives
\[
 \left|\sum_{j\geq i}\pi_j
       (r_{n,j}(\boldsymbol u)-\overline r_{n,j})\right|
 \leq C_\eta\sqrt{\epsilon_n}.
\]
Subtract adjacent tail sums, with the empty tail set to zero. Since
$b_i\geq\sqrt{\delta_\eta}$ and $\pi_i>0$, this proves the stated bound
with $\sqrt{\epsilon_n}=O(n^{\gamma-1/2})$. The barrier parameter remains
fixed throughout this dimension limit.
The maximizing control is deterministic: it maximizes the averaged
pressure before any coefficient is varied.
\end{proof}

At a maximizer, take any subsequential law $\mathscr L$ constructed
above, also taking a subsequence on which the finitely many
$\overline r_{n,i}$ converge. Lemma~\ref{up:maximizers} implies that
the limiting conditional spin means are the same, say $r_i$, in
$\mathscr L$-almost every component. Let $\overline\nu=\int\nu\,
\mathscr L(\dd\nu)$. Since every component has the same $Q$-marginal,
\begin{align*}
 \operatorname{Cov}_{\overline\nu}(R,C\mid Q=q_i)
 &=\int\operatorname{Cov}_\nu(R,C\mid Q=q_i)\,\mathscr L(\dd\nu)\\
 &\quad+\operatorname{Cov}_{\mathscr L}
       \big(r_i,\E_\nu[C\mid Q=q_i]\big)\geq0.
\end{align*}
The last covariance vanishes. The averaged conditional score means
are also nonnegative and nondecreasing by \eqref{up:componentsign}.
All these quantities are continuous functions of the array law:
the indicators of the finitely many $q_i$ can be replaced by
interpolation polynomials on their support. Compactness and contradiction
therefore give, uniformly at all maximizers and truncated times,
\begin{equation}\label{up:uniformsign}
 K_n:=\sum_i\pi_i\operatorname{Cov}_{\overline\nu_n}(R,C\mid Q=q_i)
       \geq-o_n(1),\qquad
 \operatorname{dist}(c_n,\mathcal C)\longrightarrow0,
\end{equation}
where $c_{n,i}=\E_{\overline\nu_n}[C\mid Q=q_i]$ and
$\mathcal C=\{0\leq c_0\leq\cdots\leq c_k\}$; here
$\operatorname{dist}(c,\mathcal C)=\inf_{v\in\mathcal C}\|c-v\|_2$
is the Euclidean distance to this cone.

\subsection{Differentiation of the envelope and its endpoints}

Lemma~\ref{ud:derivative} proves the local derivative formula
\begin{equation}\label{up:timederivative}
 \partial_t\overline F_n(t,a)
  =-\tfrac12\E_{\boldsymbol u}\E\langle(R_{12}-Q_{12})C_{12}\rangle
       +O_\varepsilon(s_n^2/n+s_n^4/n^2).
\end{equation}
There is no error in the spin derivative. In covariance coordinates it is
\begin{equation}\label{up:agradient}
 \partial_{a_i}\overline V_n=-\tfrac12\pi_i
                  (\overline r_{n,i}-q_i).
\end{equation}
Decomposing the expectation in \eqref{up:timederivative} conditionally
on $Q$ gives
\begin{equation}\label{up:transport}
 \partial_t\overline V_n
   =-\tfrac12K_n+\nabla_a\overline V_n\cdot c_n+o_n(1).
\end{equation}
At a maximizer, stationarity says
$\nabla_a\overline V_n=-\eta\nabla_a\sum_i\log d_i$.
For $c\in\mathcal C$,
\[
 \nabla_a\left(\sum_i\log d_i\right)\cdot c
       =\frac{c_0}{d_0}+\sum_{i=1}^k\frac{c_i-c_{i-1}}{d_i}\geq0.
\]
Since \eqref{up:agradient} uniformly bounds the gradient, the distance
error in \eqref{up:uniformsign} is harmless. Thus
\begin{equation}\label{up:maxderivative}
 \partial_t\overline V_n(t,a)\leq o_n(1)
\end{equation}
uniformly over all maximizers.

We justify using this estimate at a time-dependent maximizing control.
Fix $n$ and a compact set $\mathcal A$ in the interior of the control
cone. We first retain only finitely many feature kernels and couple
all parameters in $[0,1-\varepsilon]\times\mathcal A$ by the same
standard Gaussian decorations. On each leaf, the resulting exponent
$H_\gamma(t,a,\sigma)$ is continuous on this compact parameter set
times $\mathbb S_n$. There is a random bound
\[
 \sup_{t,a,\sigma}e^{H_\gamma(t,a,\sigma)}\leq E_\gamma,
 \qquad \sup_\gamma\E E_\gamma<\infty.
\]
Indeed, the row exponent and compensator are nonpositive. The spin
exponent is bounded by a fixed constant times the sum of the norms
of the finitely many standard Gaussian spin increments on the leaf's
path. Each retained feature field is a finite Gaussian tensor
polynomial on spin and score vectors of bounded norm, so its supremum
is bounded by a fixed constant times the sum of the absolute values
of its Gaussian coefficients on that path. These finite Gaussian
vectors have exponential moments of every fixed linear norm; their
laws and the constants are the same for every leaf. The constants
here may depend on $n$, the feature cutoff and $\mathcal A$.

The weights depend only on the Poisson points and are independent of
these decorations. Consequently
$\E\sum_\gamma w_\gamma E_\gamma\leq\sup_\gamma\E E_\gamma<\infty$.
This sum dominates all leaf integrals, uniformly in the parameters.
Dominated convergence on the sphere and for the leaf sum gives a
continuous partition. A single leaf of positive weight supplies strict
positivity simultaneously at all parameters. Hence the partition has
a positive minimum on the compact parameter set almost surely. The same argument for replica numerators, dominated by
$(\sum_\gamma w_\gamma E_\gamma)^r$, proves samplewise continuity of
bounded continuous $r$-replica insertions. Integrability of this last
power is unnecessary: the normalized insertion is bounded, so its
expectation is continuous by bounded convergence. Expected logarithms
are continuous by the uniform logarithmic integrability in
Lemma~\ref{ud:limits}.

In the derivative formula of Lemma~\ref{ud:derivative}, average the
auxiliary Gaussian directions first. Their second moments replace
each product $Y^aY^b$ by $R_{ab}$ or $Q_{ab}$. Thus the remaining
insertions are bounded continuous functions of finitely many states
and the parameters; no derivative of $\sqrt t$ occurs in these
insertions. The spin derivative has this property as well.
Lemma~\ref{ud:limits} bounds the change in any such averaged insertion
when the omitted feature covariance is restored, uniformly over the
compact parameter set. Its kernel derivative tails vanish uniformly
by $\sum_\ell\tau_\ell^2(1+p_\ell)^2<\infty$.
Hence the full right sides of the time and control derivative formulas
are uniform limits of continuous functions. Their integrated identities
identify them with the actual continuous derivatives in the interior,
with one-sided time derivatives at the interval endpoints. All bounds
also hold uniformly in the coefficient vector, so coefficient averaging
preserves these conclusions. In particular the formulas hold at every
interior maximizing control.

For fixed $\eta$, all maximizers lie in a common compact set and the
time derivatives are uniformly bounded there. The maximum in
\eqref{up:envelope} is consequently Lipschitz. At its differentiability
times, its derivative is bounded by the largest time derivative among
its maximizers. This follows directly by taking difference quotients
and a convergent subsequence of maximizing controls. Integration of
\eqref{up:maxderivative} yields
\begin{equation}\label{up:envelopecomparison}
 M_{n,\eta}(1-\varepsilon)\leq M_{n,\eta}(0)+o_n(1).
\end{equation}

At $t=0$, independence of the row and spin coordinates factorizes every
step of the marking recursion, and hence
\[
 F_n(0,a)=\alpha_n\mathcal J_\beta(q,x)+S_n(a).
\]
The spherical inequality of Lemma~\ref{ue:spherical}, uniformly for
$a_k\leq L$, gives
\begin{equation}\label{up:zeroendpoint}
 M_{n,\eta}(0)
 \leq\mathcal P_{\alpha_n,\beta}(q,x)
           +\eta(k+1)\log\max(1,L)+o_n(1),
\end{equation}
where we also used the upper bound in \eqref{up:cost}.

For the other endpoint choose $d_i=\eta$. Adding an independent spin
field along the covariance path $ra$, $0\leq r\leq1$, has normalized
pressure derivative
\[
 \tfrac12\big(a_k-\E\langle R_{12}a(Q_{12})\rangle\big)\geq0,
\]
because $R_{12}a(Q_{12})\leq a_k$ pointwise. Thus
$F_n(t,a)\geq F_n(t,0)$, while $\mathcal B(q,a)\leq a_k=(k+1)\eta$.
Equation~\eqref{up:cost} implies
\begin{equation}\label{up:smallcontrol}
 M_{n,\eta}(t)\geq F_n(t,0)-\tfrac12(k+1)\eta
                +\eta(k+1)\log\eta-Cs_n^2/n.
\end{equation}

Finally, the discontinuous endpoint can be removed uniformly in $a$.
Replace each row factor $e^{U_t(X_i)}$ by its $t=1$ factor, one row at
a time, and delete that row to define the comparison Gibbs measure.
Both factors lie in $[e^{-\beta},1]$, so their logarithmic partition
increments differ by at most $e^\beta$ times the Gibbs mean of their
absolute difference. The deleted-row measure is independent of $W_i$
and its entire $z_i$ tree. After including the residual Gaussian in
\eqref{up:channel}, the two row arguments, at each fixed state, are
standard normals with correlation $\sqrt t$. Their signs disagree with
probability $\pi^{-1}\arccos\sqrt t$. Summing the replacements proves
\begin{equation}\label{up:stependpoint}
 |F_n(t,a)-F_n(1,a)|
 \leq\alpha_n(e^\beta-1)\frac{\arccos\sqrt t}{\pi}.
\end{equation}
Combining \eqref{up:envelopecomparison}, \eqref{up:zeroendpoint},
\eqref{up:smallcontrol}, and \eqref{up:stependpoint}, and then taking
$n\to\infty$, $\eta\downarrow0$, and $\varepsilon\downarrow0$ in
that order proves Theorem~\ref{up:pressure}. The profile and temperature
are fixed throughout these limits.

\section{The spherical endpoint inequality}\label{ue:section}

Keep the masses $x_i$ fixed and allow
$0\leq q_0\leq\cdots\leq q_k<1$ in this appendix.
For an ordered nonnegative covariance path $a$, let
\[
 S_n(a)=\frac1n\E\log\sum_\gamma w_\gamma
       \int_{\mathbb S_n}e^{g(\gamma)\cdot\sigma}\,\mu_n(\dd\sigma),
 \qquad
 \mathcal B(q,a)=a_k-\sum_{i=0}^k\pi_iq_i a_i,
\]
where the independent coordinates of $g$ have the tree covariance $a$.

\begin{lemma}\label{ue:spherical}
For every $L<\infty$, uniformly over $0\leq a_0\leq\cdots\leq a_k\leq L$,
\[
 S_n(a)-\tfrac12\mathcal B(q,a)
       \leq\mathcal E(q,x)+O_L\!\left(\frac{\log(n+1)}n\right).
\]
\end{lemma}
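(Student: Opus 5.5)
The plan is to replace the spherical constraint by a Gaussian weight with a Lagrange multiplier, evaluate the resulting Gaussian model exactly through the marking recursion, and then check that the leftover variational expression in $(a,\lambda)$ is at most $\mathcal E(q,x)$.

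\textbf{Step 1: from the sphere to a Gaussian weight.} For $\lambda>0$ and a fixed field $g$, the spherical average $\int_{S_r}e^{g\cdot\sigma}$ over the sphere of radius $r$ is nondecreasing in $r$, since it is an average of a convex even function of $r$. Integrating the Lebesgue integral $\int_{\R^n}e^{g\cdot\sigma-\lambda\|\sigma\|^2/2}\,\dd\sigma$ only over the shell $\sqrt n\leq\|\sigma\|\leq\sqrt n+1$ therefore bounds the spherical integral. The bound is
\[
 \int_{\mathbb S_n}e^{g\cdot\sigma}\,\dd\mu_n
 \leq c_n(\lambda)\,e^{\lambda n/2}\prod_{j=1}^n
 \Bigl(\frac{2\pi}{\lambda}\Bigr)^{1/2}e^{g_j^2/(2\lambda)}.
\]
Here $c_n(\lambda)$ collects the Gaussian factor on the shell and the surface-area normalization. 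Stirling's formula gives $\frac1n\log c_n(\lambda)\leq-\frac12\log(2\pi)-\frac12+O_\Lambda(\log(n+1)/n)$, uniformly for $\lambda$ in a compact subset $\Lambda$ of $(0,\infty)$. The cascade weights do not depend on $\sigma$, so this bound holds leafwise inside $\sum_\gamma w_\gamma(\cdot)$. Multiplying all marks by the common factor $c_n(\lambda)e^{\lambda n/2}(2\pi/\lambda)^{n/2}$ only adds a constant to the logarithm.

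\textbf{Step 2: exact evaluation of the Gaussian model.} The marks are $\prod_j e^{g_j(\gamma)^2/(2\lambda)}$, and they factorize over coordinates. I apply Lemma~\ref{rpc:marking} exactly as in \eqref{lo:RPCentropy}, now with a positive quadratic exponent. Put $\ell_k=1/\lambda$ and $\ell_{i-1}=\ell_i-x_i d_i$ for $i\geq1$, and require $\ell_0>0$. The one-level Gaussian identity gives
\[
 \frac1{x_i}\log\E\exp\Bigl\{\frac{x_i(h+\sqrt{d_i}Z)^2}{2\ell_i}\Bigr\}
 =\frac1{2x_i}\log\frac{\ell_i}{\ell_{i-1}}+\frac{h^2}{2\ell_{i-1}}.
\]
The root average of $e^{h^2/(2\ell_0)}$ with $h\sim N(0,d_0)$ gives $-\frac12\log(1-d_0/\ell_0)$ after taking logarithms; this requires $d_0<\ell_0$. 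Collecting terms, I obtain
\[
 S_n(a)\leq\Gamma(a,\lambda)+O\bigl(\log(n+1)/n\bigr),
\]
\[
 \Gamma(a,\lambda)=\frac{\lambda-1}{2}-\frac12\log\lambda
 +\frac12\sum_{i=1}^k\frac1{x_i}\log\frac{\ell_i}{\ell_{i-1}}
 -\frac12\log\Bigl(1-\frac{d_0}{\ell_0}\Bigr).
\]
The integrability hypotheses of Lemma~\ref{rpc:marking} hold whenever $\ell_0>d_0$: each conditional moment is then finite and the logarithms grow at most quadratically in the parent field.

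\textbf{Step 3: the finite-dimensional inequality (main obstacle).} It remains to show that for every admissible $a$ with $a_k\leq L$ some $\lambda$, taken from a compact set depending only on $L$ and the profile, satisfies $\Gamma(a,\lambda)-\frac12\mathcal B(q,a)\leq\mathcal E(q,x)$.

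I will first try $1/\lambda=v+\sum_i x_id_i+\theta$, possibly with $\theta=0$. This choice makes the ladder $(\ell_i)$ a shift of the profile ladder $(\lambda_i)$ of \eqref{eq:lambdas}. I then compare term by term using $\log y\leq y-1$ and $-\log(1-y)$ convexity, and use summation by parts on $\mathcal B$ in the form
\[
 \mathcal B(q,a)=\sum_i d_i\sum_{j\geq i}\pi_j(1-q_j)+\sum_{i\geq1}x_id_i,
\]
as in the proof of Lemma~\ref{up:maximizers}.

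Conceptually the inequality should hold because the expression is concave in $a$ after optimizing $\lambda$. I expect its supremum over $a$ to be attained where $\ell_i=\lambda_i$, that is, at $d_i=q_i-q_{i-1}$ and $d_0=q_0$, and the value there is exactly $\mathcal E(q,x)$; this is the duality already implicit in \eqref{lo:RPCentropy}. So the fallback is to verify stationarity and concavity directly in the variables $(\ell_0,\dots,\ell_k)$, in which every term is a logarithm or linear.

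\textbf{Uniformity.} The bound $a_k\leq L$ keeps $\lambda$ in a compact subset of $(0,\infty)$ and keeps $\ell_0-d_0$ bounded below. This gives the uniform $O_L(\log(n+1)/n)$ error from Stirling's formula and from the shell estimate of Step 1.
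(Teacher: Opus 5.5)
Your Step 1 matches the paper's shell comparison. Step 2, however, contains an error that makes Step 3 false.

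\textbf{The root term.} In Lemma~\ref{rpc:marking} the root is averaged by ordinary expectation, $\E\log\sum_aw_aF_a=\E Y_0$, because the root field is shared by all leaves and cannot be annealed. The root therefore contributes $\E[h^2/(2\ell_0)]=d_0/(2\ell_0)$, not $\log\E e^{h^2/(2\ell_0)}=-\frac12\log(1-d_0/\ell_0)$. Compare the linear terms $q_0/(2\lambda_0)$ in \eqref{lo:RPCentropy} and $a_0/(2b_0)$ in \eqref{ue:Gaussian}.

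Also, since the level-$k$ mark is $g^2/(2\lambda)$, your ladder must start at $\ell_k=\lambda$, not $1/\lambda$; the $\ell_i$ are precisions.

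By Jensen your $\Gamma$ is still an upper bound, but it is too weak. For $k=0$ it equals $\frac{\lambda-1}2-\frac12\log(\lambda-a_0)$. Its minimum over $\lambda$ is the annealed value $a_0/2$. Step 3 would then require $\frac12q_0a_0\le\frac12\log(1-q_0)+\frac{q_0}{2(1-q_0)}$ for all $a_0\le L$. At $q_0=\frac12$, $a_0=2$ this reads $0.5\le0.154$, which is false. More generally, whenever $q_0>0$ your inequality fails at $a=\widehat q$ (for $L\geq\widehat q_k$) for every $\lambda$, because the correct bound is already an equality there.

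\textbf{The finite-dimensional inequality.} Even with the root corrected, this inequality, which is the whole content of the lemma, is not proved, and your guesses do not fit its structure. Write $\Gamma_0$ for the corrected expression. Over all ordered paths, the supremum of $\inf_\lambda\Gamma_0(a,\lambda)-\frac12\mathcal B(q,a)$ is attained at the canonical path $a=\widehat q$ of \eqref{ue:canonical}, with $\lambda=1/v$ and $\ell_i=1/\lambda_i$. It is not attained at $d_i=q_i-q_{i-1}$. There the recursion $\ell_{i-1}=\ell_i-x_id_i$ runs opposite to $\lambda_{i-1}=\lambda_i+x_i(q_i-q_{i-1})$, so $\ell_i=\lambda_i$ cannot hold. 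Your trial $1/\lambda=v+\sum_ix_id_i+\theta$ suffers from the same variance/precision confusion.

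The missing idea is the paper's. Take $\lambda=b$ to be the exact minimizer of the convex function $\Gamma_0(a,\cdot)$, namely the root of $r_k(b)+1/b=1$ in \eqref{ue:inverse}--\eqref{ue:normalization}. This exhibits $a$ as the canonical path $\widehat r$ of an overlap path $r$. It also gives $1\le b\le2L+2$, which is what actually makes your Stirling error uniform. Since $\mathcal B(r,a)=b-1$, your bound at this $\lambda$ becomes $\mathcal E(r,x)+\frac12\mathcal B(r,a)$ up to $O_L(\log(n+1)/n)$. By \eqref{ue:gradient},
\[
\tfrac12[\mathcal B(r,a)-\mathcal B(q,a)]=\tfrac12\sum_i\pi_ia_i(q_i-r_i)=\nabla\mathcal E(r,x)\cdot(q-r).
\]
The tangent inequality from the convexity of $q\mapsto\mathcal E(q,x)$ (the Hessian bound \eqref{ue:hessian}) then finishes the proof.

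Your heuristic ``concave in $a$ after optimizing $\lambda$'' is true. Indeed $\inf_\lambda\Gamma_0(a,\lambda)=\min_r\{\mathcal E(r,x)+\frac12\mathcal B(r,a)\}$ is an infimum of affine functions of $a$. But proving that representation requires exactly the convexity of $\mathcal E$ in the overlap variables and the bijection $a\leftrightarrow r$, and your proposal supplies neither.
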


\begin{proof}
We first establish the required convexity of $\mathcal E$.
Write $\lambda_i=1-q_k+\sum_{j>i}x_j(q_j-q_{j-1})$ and define
the canonical covariance path by
\begin{equation}\label{ue:canonical}
 \widehat q_0=\frac{q_0}{\lambda_0^2},\qquad
 \widehat q_i-\widehat q_{i-1}
       =\frac{q_i-q_{i-1}}{\lambda_{i-1}\lambda_i}.
\end{equation}
Direct differentiation gives
\begin{equation}\label{ue:gradient}
 \partial_{q_i}\mathcal E(q,x)=\tfrac12\pi_i\widehat q_i.
\end{equation}
Here is also an explicit Hessian calculation. The affine form of
$\lambda_i$ is
$\lambda_i=1-x_{i+1}q_i-\sum_{j>i}\pi_jq_j$.
For a direction $z$, put
$L_i=-x_{i+1}z_i-\sum_{j>i}\pi_jz_j$ and, for $i\geq1$,
$c_i=x_i^{-1}-x_{i+1}^{-1}>0$.
For $k\geq1$, telescoping the defining logarithms gives
\[
 2\mathcal E=\frac{q_0}{\lambda_0}
              +\frac1{x_1}\log\lambda_0
              -\sum_{i=1}^k c_i\log\lambda_i.
\]
Its first derivatives telescope using
\[
 \frac1{x_i}\left(\frac1{\lambda_i}-\frac1{\lambda_{i-1}}\right)
       =\frac{q_i-q_{i-1}}{\lambda_{i-1}\lambda_i},
\]
which proves \eqref{ue:gradient}. Write $D^2\mathcal E[z,z]$ for the
second directional derivative in the overlap direction $z$, with the
masses $x$ fixed. Then
\[
 2D^2\mathcal E[z,z]
 =-\frac{2z_0L_0}{\lambda_0^2}
  +\left(\frac{2q_0}{\lambda_0^3}
                      -\frac1{x_1\lambda_0^2}\right)L_0^2
  +\sum_{i=1}^k\frac{c_iL_i^2}{\lambda_i^2}.
\]
Expanding the $L_i$ and cancelling cross terms gives
$-2z_0L_0-L_0^2/x_1+\sum_{i=1}^kc_iL_i^2=\sum_i\pi_i z_i^2$.
Consequently
\begin{equation}\label{ue:hessian}
 2D^2\mathcal E[z,z]
 =\lambda_0^{-2}\sum_i\pi_i z_i^2
     +2q_0\lambda_0^{-3}L_0^2
     +\sum_{i=1}^kc_iL_i^2(\lambda_i^{-2}-\lambda_0^{-2})
 \geq\sum_i\pi_i z_i^2.
\end{equation}
For $k=0$ the same formula, with empty sums and $L_0=-z_0$, follows
directly from $\mathcal E(q_0)=\frac12\log(1-q_0)+q_0/[2(1-q_0)]$.
All formulas extend to repeated overlaps and $q_0=0$ because the
$\lambda_i$ remain positive. Integrating the Hessian bound along a
segment in the ordered overlap domain yields
\begin{equation}\label{ue:tangent}
 \mathcal E(q,x)\geq\mathcal E(r,x)
       +\nabla\mathcal E(r,x)\cdot(q-r)
       +\tfrac14\sum_i\pi_i(q_i-r_i)^2.
\end{equation}

Every nonnegative ordered covariance path is canonical for a unique
overlap path. To see this, put $\Delta a_i=a_i-a_{i-1}$ and
$A_0=\sum_{i=1}^kx_i\Delta a_i$. For $b>A_0$, define
\begin{equation}\label{ue:inverse}
 b_i=b-\sum_{j>i}x_j\Delta a_j,\qquad
 r_0(b)=a_0/b_0^2,\qquad
 r_i(b)-r_{i-1}(b)=\frac{\Delta a_i}{b_{i-1}b_i}.
\end{equation}
There is a unique solution of
\begin{equation}\label{ue:normalization}
 r_k(b)+1/b=1.
\end{equation}
The left side is continuous and strictly decreasing to zero. It
diverges at its lower endpoint: if $a_0>0$ the first term diverges;
otherwise the first positive increment has $b_{i-1}\downarrow0$ and
$b_i\to x_i\Delta a_i>0$. If the entire path is zero, the equation
is simply $1/b=1$. The solution has $b\geq1$ and
$0\leq r_0\leq\cdots\leq r_k=1-1/b<1$.
The identity
$x_i\Delta r_i=1/b_{i-1}-1/b_i$ telescopes to
\begin{equation}\label{ue:inverse-lambda}
 \lambda_i(r)=1/b_i.
\end{equation}
Substitution in \eqref{ue:canonical} gives $\widehat r=a$.
Conversely a canonical path gives \eqref{ue:inverse}--\eqref{ue:normalization}
with $b=(1-r_k)^{-1}$, proving uniqueness.
If $a_k\leq L$, evaluation of the left side of
\eqref{ue:normalization} at $b=2L+2$ bounds it by
$L/(L+2)^2+1/(2L+2)<1$, and hence
\begin{equation}\label{ue:bbound}
 1\leq b\leq2L+2.
\end{equation}

For this $b$, replace the spherical integral at a leaf by
\[
 I_\gamma=\int_{\mathbb R^n}
       e^{-b\|\sigma\|^2/2+g(\gamma)\cdot\sigma}
                 \frac{\dd\sigma}{(2\pi)^{n/2}}
       =b^{-n/2}e^{\|g(\gamma)\|^2/(2b)}.
\]
At level $i$, Gaussian quadratic integration changes precision $b_i$
to $b_{i-1}=b_i-x_i\Delta a_i>0$. The marking recursion gives exactly
\begin{equation}\label{ue:Gaussian}
 \frac1n\E\log\sum_\gamma w_\gamma I_\gamma
   =-\frac12\log b+\frac12\sum_{i=1}^k\frac1{x_i}
                    \log\frac{b_i}{b_{i-1}}+\frac{a_0}{2b_0}
   =\mathcal E(r,x).
\end{equation}
Only conditional Gaussian moments at the successive levels are used;
the final root operation is expectation of a quadratic logarithm.

For each fixed vector $g$, the normalized spherical integral of
$e^{g\cdot\sigma}$ is nondecreasing in the sphere radius, by antipodal
symmetry and monotonicity of $\cosh$. Restrict the radial integral for
$I_\gamma$ to radii between $\sqrt n$ and $(1+h)\sqrt n$, with
$h=[n(1+b)]^{-1}$. This gives the leafwise bound
\[
 I_\gamma\geq c_{n,b}
       \int_{\mathbb S_n}e^{g(\gamma)\cdot\sigma}\,\mu_n(\dd\sigma),
 \qquad
 c_{n,b}=\frac{2^{1-n/2}n^{n/2}}{\Gamma(n/2)}
       \int_1^{1+h}u^{n-1}e^{-bnu^2/2}\,\dd u.
\]
The positive cascade weights preserve this inequality. Put
$\mathfrak e_n(a)=-n^{-1}\log c_{n,b}-(b-1)/2$.
The elementary Stirling bounds give
\[
 \frac{\frac12\log n+\frac12\log\pi+\log(1+b)-1}{n}
 \leq \mathfrak e_n(a)\leq
 \frac{\frac12\log n+\frac12\log\pi+\log(1+b)+1+1/n}{n}.
\]
For the upper bound use
$2^{1-n/2}n^{n/2}/\Gamma(n/2)\geq
 \sqrt{n/\pi}e^{n/2-1/(6n)}$, $u^{n-1}\geq1$,
$bnh\leq1$, and $bnh^2/2\leq1/(8n)$.
For the lower bound use the reverse prefactor bound and
$u^{n-1}\leq e$ on the shell. In view of \eqref{ue:bbound},
$|\mathfrak e_n(a)|=O_L(\log(n+1)/n)$ uniformly.

Finally,
\[
 \mathcal B(r,a)
  =(1-r_k)a_k+\sum_{i=1}^kx_i(r_i a_i-r_{i-1}a_{i-1})=b-1.
\]
For the last equality, the sequence $b_i r_i-a_i/b_i$ starts at zero
and its $i$th increment is $x_i(r_i a_i-r_{i-1}a_{i-1})$; then use
$1-r_k=1/b$. The shell comparison and \eqref{ue:Gaussian} therefore give
\[
 S_n(a)\leq\mathcal E(r,x)+\tfrac12\mathcal B(r,a)+\mathfrak e_n(a).
\]
Subtracting $\mathcal B(q,a)/2$ adds
$\frac12\sum_i\pi_i a_i(q_i-r_i)=\nabla\mathcal E(r,x)\cdot(q-r)$.
Equation~\eqref{ue:tangent} proves, in fact, the stronger bound
\[
 S_n(a)-\tfrac12\mathcal B(q,a)
 \leq\mathcal E(q,x)-\tfrac14\sum_i\pi_i(q_i-r_i)^2+\mathfrak e_n(a).
\]
\end{proof}

\section{Differentiating the score perturbation}\label{ud:section}

This appendix proves the derivative estimate used in
\eqref{up:timederivative}. The definitions of $U_t,\chi_t,C,K_\ell,D$
are those of \eqref{up:channel}--\eqref{up:kernels}, with a fixed
coefficient vector. We suppress that vector in the notation. The
amplitude $s\geq0$ is arbitrary in the calculation.

\begin{lemma}\label{ud:derivative}
Let $F_{n,s}(t,a)$ be the expected normalized log partition with
perturbation $sG_D-s^2D_{11}/2$. For $m\leq An$ and
$0\leq t\leq1-\varepsilon$, it is absolutely continuous in $t$ and
\begin{equation}\label{ud:statement}
 \partial_t F_{n,s}(t,a)
   =-\tfrac12\E\langle(R_{12}-Q_{12})C_{12}\rangle
                   +\mathcal R_{n,s}(t,a),\qquad
 |\mathcal R_{n,s}|\leq C\left(\frac{s^2}{n}+\frac{s^4}{n^2}\right).
\end{equation}
The identity holds in integrated form on the whole interval. The
constant is independent of $n,m,a$ and the coefficients within the stated
bounds. The independent spin-covariance derivatives have no error:
\begin{equation}\label{ud:spin}
 \partial_{a_i}F_{n,s}
 =\tfrac12\left(\ind\{i=k\}
            -\E\langle R_{12}\ind\{Q_{12}=q_i\}\rangle\right)
\end{equation}
in the interior of the covariance cone.
\end{lemma}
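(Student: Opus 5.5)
Work first with finitely many retained feature kernels $K_\ell$, finitely many cascade leaves and a fine quadrature on $\mathbb S_n$. Under these truncations every quantity below is an honest finite Gaussian integral. The truncations are removed at the end by the uniform approximation statement (Lemma~\ref{ud:limits}), using the summability $\sum_\ell\tau_\ell^2(1+p_\ell)^4<\infty$. The spin identity \eqref{ud:spin} is the easy part. The exponent contains $g_j(\gamma)\sigma_j$, and $g$ is independent of everything else, including the score perturbation, which depends only on rows, spins and $Q$. Gaussian integration by parts in the tree increments of $g$ therefore gives, exactly,
\[
 \partial_{a_i}F_{n,s}=\tfrac12\bigl(\E\langle \ind\{Q_{11}\ge q_i\}\cdot\tfrac1n\|\sigma\|^2\rangle\ \text{-type diagonal}-\E\langle R_{12}\ind\{Q_{12}=q_i\}\rangle\bigr).
\]
Writing this in the coordinates $a_i$ rather than in the increments $d_i$ turns the diagonal term into $\ind\{i=k\}$, since $Q_{11}=q_k$ and $R_{11}=1$. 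No error term arises.

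For the time derivative, differentiate the exponent $\sum_iU_t(X_i)+sG_D-s^2D_{11}/2$ in $t$. There are three sources of $t$-dependence.
\begin{enumerate}
\item The Gaussian argument $X_i$, through $\partial_tX_i=\frac1{2\sqrt t}W_i\cdot\sigma/\sqrt n-\frac1{2\sqrt{1-t}}z_i$.
\item The explicit dependence of $U_t$ on its residual variance $(1-t)v$. By the heat equation this equals $-\frac v2\partial_h^2U_t\cdot e^{-U_t}$-corrected, that is, $-\frac v2(U_t''+(U_t')^2)$.
\item The dependence of the perturbation on $t$ through the scores $\chi_t(X_i)$ inside $\widetilde C$, and hence inside $D$.
\end{enumerate}
For source 1, integrate by parts jointly in $(W_i,z_i)$. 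The cross covariance of $X_i^a$ with $\partial_tX_i^b$ is $\frac12(R_{ab}-Q_{ab})$, so the off-diagonal terms give $-\frac12\E\langle(R_{12}-Q_{12})C_{12}\rangle$. The diagonal terms give $\frac12(1-q_k)\frac1n\sum_i\E\langle U''+(U')^2\rangle$, and these cancel exactly against source 2, because $R_{11}-Q_{11}=v$. This is the standard Guerra computation.

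The remaining contributions are remainders, and all of them come from the score dependence of the perturbation. The row Gaussians appear inside $D_{\boldsymbol u}$ through $\chi_t(X_i^a)$. Consequently, when we integrate by parts in $W_i$ or $z_i$, the derivative can also land on $sG_D$ and on $s^2D_{11}/2$. The same happens for source 3. In each such term, the derivative of $\widetilde C_{ab}$ with respect to one row field is $O(1/n)$, since $\chi_t$ and its derivatives are bounded for $t\le1-\varepsilon$. The sum over the $m\le An$ rows and the Gaussian factor then produce a few typical terms. One is $s\,\partial_{X}G_D$ paired with $\chi$. Applying a second integration by parts, now in the field $G_D$, converts this into $s^2\,\E\langle\partial D\rangle$-type insertions of size $s^2/n$. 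Squared terms give $s^4\cdot n\cdot(1/n)^2\cdot(1/n)$ per unit of normalization, which is $O(s^4/n^2)$. The compensator derivative contributes directly $s^2\cdot O(1/n)$. The kernel derivatives are controlled by $\sum_\ell\tau_\ell^2p_\ell^2$, using $|x^p-y^p|\le p|x-y|$. These bounds hold with constants depending only on $\varepsilon,A,\beta$ and the summability constants, and not on $n,a$ or $\boldsymbol u$. Finally, $t\mapsto F_{n,s}$ is smooth on $(0,1-\varepsilon]$ at finite truncation, and at $t=0$ the $1/\sqrt t$ singularity is integrable. This yields the integrated form on the whole interval.

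I expect the main obstacle to be the bookkeeping of the score-dependent perturbation. Because $G_D$ depends on the rows through $\chi_t(X_i)$, the perturbation is not a field independent of the row disorder. The integration by parts in $(W_i,z_i)$ therefore has to be carried out conditionally on a realization of the Gaussian tensor coefficients, with $G_D$ written as an explicit tensor polynomial in the score vectors. Only after that can the coefficients be averaged to replace their products by $D$. My plan is to use the explicit node-wise tensor realization described before \eqref{up:cost}, differentiate the tensor polynomial in the row fields term by term, and bound each resulting insertion by its kernel weight times $p_\ell^2$. I would then verify that the uniform tail of the kernel series lets the truncation limit commute with $\partial_t$.
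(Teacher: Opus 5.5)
Your proposal is correct and follows the paper's argument: finite-state truncation, cancellation of the diagonal row terms against $\partial_tU_t$ via the heat equation, per-row $O(1/n)$ bounds on score-kernel derivatives giving the $s^2/n$ and $s^4/n^2$ remainders, removal of the truncations by Lemma~\ref{ud:limits}, and plain Gaussian differentiation for \eqref{ud:spin}. The only difference is the differentiation device. You integrate by parts directly in $(W_i,z_i)$ using the cross covariance $\tfrac12(R-Q)$, and then in the feature coefficients of the explicit tensor realization. The paper instead treats the perturbation only through its covariance, via the insertion operators $L_r,T_r$, whose compensator cancels every diagonal term. It writes the indefinite change $R-Q$ as $\mathscr D$ over auxiliary directions $Y_R,Y_Q$, which gives the remainder \eqref{ud:remainder} in closed form without differentiating the random field.
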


\begin{proof}
We first give a finite-state calculation. With all other disorder fixed,
let
\[
 \Psi(\theta)=\E_g\log\int
       e^{H_\theta+s g_{D_\theta}-s^2D_{\theta,11}/2}\,\dd\mu.
\]
For an $r$-replica observable define the insertion operators
\begin{align*}
 L_r(h)&=\sum_{a=1}^r h_a-rh_{r+1},\\
 T_r(K)&=\sum_{\substack{a,b\leq r\\a\ne b}}K_{ab}
       -2r\sum_{a=1}^rK_{a,r+1}+r(r+1)K_{r+1,r+2}.
\end{align*}
Gaussian covariance differentiation gives
\begin{align}
 \dot\Psi&=\E_g\langle\dot H_1\rangle
                      -\tfrac{s^2}{2}\E_g\langle\dot D_{12}\rangle,
                      \label{ud:first}\\
 \partial_\theta\E_g\langle f\rangle
   &=\E_g\left\langle\dot f+fL_r(\dot H)
                              +\tfrac{s^2}{2}fT_r(\dot D)\right\rangle.
                      \label{ud:observable}
\end{align}
For \eqref{ud:observable}, the usual covariance insertion before
compensation is
\[
 \sum_{a,b\leq r}\dot D_{ab}
 -2r\sum_{a\leq r}\dot D_{a,r+1}
 +r(r+1)\dot D_{r+1,r+2}-r\dot D_{r+1,r+1}.
\]
Differentiating the compensator subtracts
$\sum_{a\leq r}\dot D_{aa}-r\dot D_{r+1,r+1}$ and cancels every
diagonal term. In particular
$T_1(K)=-2K_{12}+2K_{23}$ and
$T_2(K)=2K_{12}-4K_{13}-4K_{23}+6K_{34}$.
Applying \eqref{ud:observable} to both terms of \eqref{ud:first} yields
\begin{align}
 \ddot\Psi
 &=\E_g\langle\ddot H_1+\dot H_1(\dot H_1-\dot H_2)\rangle\nonumber\\
 &\quad+\tfrac{s^2}{2}\E_g\langle
      \dot H_1T_1(\dot D)-\ddot D_{12}-\dot D_{12}L_2(\dot H)\rangle
      -\tfrac{s^4}{4}\E_g\langle\dot D_{12}T_2(\dot D)\rangle.
      \label{ud:second}
\end{align}
Only $D_\theta$, rather than its derivatives, must be positive semidefinite.
Singular finite covariances follow by adding a fixed positive diagonal
regularization and taking its size to zero.

The row covariance derivative is $R-Q$, which need not be positive
semidefinite. Introduce independent genuine Gaussian directions $Y_R$
and $Y_Q$ with covariances $R$ and $Q$, respectively, and write
\[
 \mathscr D f=\E_{Y_R}f(Y_R)-\E_{Y_Q}f(Y_Q).
\]
These directions are independent of the Gibbs disorder and occur only
in insertions. Thus $\mathscr D(Y^aY^b)=R_{ab}-Q_{ab}$ and
$\mathscr D((Y^a)^2)=v$.
For the row variation $X_i\mapsto X_i+\theta Y$, let
\[
 h_{i,Y}=\chi_t(X_i)Y,\qquad
 h''_{i,Y}=\chi_t'(X_i)Y^2,
 \qquad D_{i,Y}=\partial_\theta D,\quad
 D_{i,YY}=\partial_\theta^2D.
\]
Also let $\dot D$ denote the explicit time derivative of $D$ at fixed
row arguments. The heat equations are
\begin{equation}\label{ud:heat}
 \partial_tU_t=-\tfrac v2(\chi_t'+\chi_t^2),\qquad
 \partial_t\chi_t=-\tfrac v2(\chi_t''+2\chi_t\chi_t').
\end{equation}
Apply \eqref{ud:second} to each row and contract its second derivative
with $\mathscr D/2$. The terms independent of $s$ are
\[
 \frac12\sum_{i=1}^m\E\left\langle
       v(\chi_t'+\chi_t^2)(X_i^1)
       -(R_{12}-Q_{12})\chi_t(X_i^1)\chi_t(X_i^2)
                             \right\rangle.
\]
The first term cancels the explicit derivative of $U_t$ by
\eqref{ud:heat}. After division by $n$, the second is the principal
term in \eqref{ud:statement}.

The remaining terms can be recorded exactly. Set, with $h=h_{i,Y}$,
\[
 A_i(Y)=h_1T_1(D_{i,Y})-(D_{i,YY})_{12}
                         -(D_{i,Y})_{12}L_2(h),\qquad
 B_i(Y)=(D_{i,Y})_{12}T_2(D_{i,Y}).
\]
Then
\begin{equation}\label{ud:remainder}
 \mathcal R_{n,s}
 =-\frac{s^2}{2n}\E\langle\dot D_{12}\rangle
       +\frac{s^2}{4n}\sum_{i=1}^m\mathscr D\E\langle A_i(Y)\rangle
       -\frac{s^4}{8n}\sum_{i=1}^m\mathscr D\E\langle B_i(Y)\rangle.
\end{equation}
The $s^4$ term comes from differentiating the two-replica kernel
insertion. All Gibbs brackets here are those at the unvaried row
realization; the auxiliary Gaussian expectation does not change them.

We bound each insertion directly. With
$\chi_a=\chi_t(X_i^a)$ and spatial primes, the one-row score derivatives
are
\begin{align*}
 (\widetilde C_{i,Y})_{12}
 &=\frac{\chi'_1\chi_2Y^1+\chi_1\chi'_2Y^2}{nB_\varepsilon},\\
 (\widetilde C_{i,YY})_{12}
 &=\frac{\chi''_1\chi_2(Y^1)^2
              +2\chi'_1\chi'_2Y^1Y^2
              +\chi_1\chi''_2(Y^2)^2}{nB_\varepsilon}.
\end{align*}
Writing $k_\ell=\vartheta_{\ell,R}R+
\vartheta_{\ell,C}\widetilde C+\vartheta_{\ell,Q}Q$, the chain rule gives
\begin{align*}
 D_{i,Y}&=\sum_\ell\tau_\ell^2u_\ell^2p_\ell k_\ell^{p_\ell-1}
                         \vartheta_{\ell,C}\widetilde C_{i,Y},\\
 D_{i,YY}&=\sum_\ell\tau_\ell^2u_\ell^2
    \left[p_\ell k_\ell^{p_\ell-1}\vartheta_{\ell,C}\widetilde C_{i,YY}
       +p_\ell(p_\ell-1)k_\ell^{p_\ell-2}
                      \vartheta_{\ell,C}^2\widetilde C_{i,Y}^2\right].
\end{align*}
The second term is zero for $p_\ell=1$, without evaluating a negative
power. Since $|k_{\ell,ab}|\leq1$, summability and the bounded derivatives
of $\chi_t$ give
\begin{equation}\label{ud:kernelbounds}
 |(D_{i,Y})_{ab}|\leq\frac Cn(|Y^a|+|Y^b|),\quad
 |(D_{i,YY})_{ab}|\leq\frac Cn((Y^a)^2+(Y^b)^2),\quad
 |\dot D_{ab}|\leq C.
\end{equation}
For the last bound, explicitly differentiate
\[
 \dot{\widetilde C}_{ab}
   =\frac1{nB_\varepsilon}\sum_{i=1}^m
       \left[(\partial_t\chi_t)(X_i^a)\chi_t(X_i^b)
                      +\chi_t(X_i^a)(\partial_t\chi_t)(X_i^b)\right]
\]
and use $m\leq An$ and \eqref{ud:heat}.
For either direction, at fixed replicas,
$\E_Y(Y^a)^2\leq1$ and $\E_Y|Y^aY^b|\leq1$.
Equations~\eqref{ud:kernelbounds} and the displayed insertion operators
therefore imply
\[
 |\mathscr D\E\langle A_i(Y)\rangle|\leq C/n,
 \qquad
 |\mathscr D\E\langle B_i(Y)\rangle|\leq C/n^2.
\]
Substitution in \eqref{ud:remainder} proves \eqref{ud:statement}.

To remove the finite-state restrictions, apply Lemma~\ref{ud:limits}
below. In particular, first average the auxiliary directions in
\eqref{ud:remainder}. Every resulting insertion is a bounded continuous
function of finitely many states at finite feature cutoff, and the
bounds just proved hold uniformly under spherical quadrature and leaf
truncation. The feature tails of the kernels and of their derivatives
vanish uniformly. The lemma therefore passes the integrated
finite-state identity to the full model, proving absolute continuity
and \eqref{ud:statement}.

Finally, the feature kernel is independent of the independent spin-field
coefficients and their covariance path. Its differentiation introduces
no extra term in a spin covariance variation. The spin covariance
derivative is $nR_{12}\ind\{Q_{12}=q_i\}$, with self derivative
$n\ind\{i=k\}$. Ordinary Gaussian differentiation and division by $n$
give \eqref{ud:spin}. Admissible one-sided derivatives at the boundary
follow by continuity.
\end{proof}

\begin{lemma}[State-space approximation]\label{ud:limits}
Fix $n,m,s$, a coefficient vector $\boldsymbol u\in[0,1]^{\mathbb N}$,
$\varepsilon>0$, and $L<\infty$. For $0\leq t\leq1-\varepsilon$ and
$0\leq a_0\leq\cdots\leq a_k\leq L$, the compensated partition with
covariance $D$ is finite and strictly positive almost surely.
Its expected logarithm, and the expected bounded replica insertions
appearing in \eqref{ud:first}--\eqref{ud:remainder} and
\eqref{ud:spin}, after averaging auxiliary Gaussian directions, are obtained by the following successive limits:
spherical probability quadratures, finite leaf sets, and finite feature
sums. The quadratures may be chosen antipodally symmetric (invariant under
$\sigma\mapsto-\sigma$), and finite leaf weights are normalized to have sum one. The integrated derivative
identities pass through these limits.

More precisely, put $D^{(N)}=\sum_{\ell\leq N}\tau_\ell^2u_\ell^2K_\ell$,
$T=\sum_\ell\tau_\ell^2$, and
$\delta_N=\sum_{\ell>N}\tau_\ell^2$.
The unnormalized expected log partitions for $D$ and $D^{(N)}$
differ by at most $s^2\delta_N/2$. If $f$ is a bounded $r$-replica
insertion, continuous in the spherical coordinates for each fixed
leaf tuple and independent of the feature Gaussian coefficients, then
\begin{equation}\label{ud:featuretail}
 \left|\E\langle f\rangle_D-\E\langle f\rangle_{D^{(N)}}\right|
 \leq C_r s^2\|f\|_\infty\delta_N.
\end{equation}
These estimates are uniform in the parameters just specified and in
the leaf and sphere approximations. Logarithms of all the approximating
partitions are uniformly integrable for these fixed $n,m,s,L$.
The same assertions hold when one coefficient is in $[-1,2]$, with
$T$ and $\delta_N$ replaced by $4T$ and $4\delta_N$ in the bounds.
\end{lemma}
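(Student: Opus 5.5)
The plan is to prove the lemma in three layers: finiteness and positivity, then the two explicit tail estimates by Gaussian interpolation in the feature covariance, and finally the successive limits and uniform integrability.

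\textbf{Finiteness and positivity.} At fixed $n,m,s$ the state space is a compact sphere times a countable leaf set. The row exponent is bounded in $[-m\beta,0]$ and the compensator is nonpositive. The spin field and feature field on a leaf have continuous versions with exponential moments of their suprema, by the tensor Lipschitz estimate and $\sum_\ell\tau_\ell^2(1+p_\ell)^4<\infty$. So, as in the domination argument already given before \eqref{up:envelopecomparison}, $\E\sum_\gamma w_\gamma E_\gamma<\infty$. This gives finiteness of the partition, and a single leaf of positive weight gives strict positivity.

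\textbf{Tail estimates.} Interpolate the covariance $D_r=D^{(N)}+r(D-D^{(N)})$, keeping the compensator $-s^2D_{r,11}/2$. By \eqref{ud:first}, the unnormalized expected log partition has derivative
\[
-\tfrac{s^2}{2}\E\langle (D-D^{(N)})_{12}\rangle_r .
\]
This is a squared feature norm lying in $[0,\delta_N]$, which gives the bound $s^2\delta_N/2$. For a bounded insertion $f$, \eqref{ud:observable} gives the derivative
\[
\tfrac{s^2}{2}\E\langle f\,T_r(D-D^{(N)})\rangle_r ,
\]
and $T_r$ has $O(r^2)$ terms, each bounded by $\delta_N$. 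This yields \eqref{ud:featuretail} with $C_r=O(r^2)$. The finite-state derivation of \eqref{ud:observable} is justified on the approximations and passed to the limit. When one coefficient lies in $[-1,2]$, $u_\ell^2\leq4$, which replaces $T,\delta_N$ by $4T,4\delta_N$. None of these bounds depends on the leaf set or the quadrature, so they are uniform.

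\textbf{Successive limits.} Take them in order. First, at finite feature cutoff and a finite leaf set, replace $\mu_n$ by antipodally symmetric quadratures converging weakly. The integrand is continuous in $\sigma$ off the row hyperplanes, which form a $\mu_n$-null set; since $t\leq1-\varepsilon$, $U_t$ is in fact smooth. Bounded convergence therefore gives convergence of partitions and of continuous insertions. Second, truncate the leaves to the finitely many largest cascade points and renormalize. The dominating variable $\sum_\gamma w_\gamma E_\gamma$ gives monotone and dominated convergence of numerators and partitions. Third, remove the feature cutoff using the tail bounds.

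The main obstacle is the uniform logarithmic integrability of the approximating partitions, which is needed to pass expected logarithms through the limits. For the upper bound, $\log^+$ is controlled by $\log\sum_\gamma w_\gamma E_\gamma$, which is integrable uniformly because $\E\sup E_\gamma$ is bounded uniformly. For the lower bound, apply Jensen over the Gibbs/leaf/sphere average: $\log Z$ is at least the weighted average of the exponent. The weighted average of the row terms is at least $-m\beta$, and the compensator average is at least $-s^2T/2$. The average of the field terms is a centered Gaussian with variance bounded uniformly, since the weights are independent of the decorations. This gives second moments of $\log^-$, uniform across all quadratures, leaf truncations and cutoffs. Integrated derivative identities then pass to the limit: both sides converge, the right sides uniformly in the parameters on compacts, by bounded convergence of the averaged insertions.
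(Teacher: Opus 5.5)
Your argument is sound, but it handles the infinite-feature limit by a genuinely different route from the paper.

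\textbf{The paper's route.} The paper never uses sample continuity of the full feature series in this proof. It performs the quadrature and leaf limits only at finite cutoff. It then constructs the full model from the ratio $Y_N=Z_{s,N}/Z_0$, which is an $L^2$-bounded martingale in the feature fields, with $\E_gY_N^2\le e^{s^2T}$. The limit density $W$ is identified at $\nu_0$-almost every state through almost sure convergence of the Gaussian series, and $Y>0$ follows from the Jensen bound. The Gibbs measure is obtained by total-variation convergence. The tail bounds are proved between two finite cutoffs $D^{(N)},D^{(M)}$, and then $M\to\infty$.

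\textbf{Your route.} You use the Dudley-type continuity of the whole field on each leaf (asserted in Appendix~\ref{up:section}) to run the state-space approximations directly at infinite cutoff. Finite-state interpolation between $D^{(N)}$ and $D$ then gives the tail bounds, and the feature limit becomes a corollary.

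\textbf{Logarithmic integrability.} The paper factors $Z_s=Z_0Y$. It bounds $Z_0\ge e^{-\beta m}$ by antipodal symmetry and uses $\E Z_{\rm sp}=e^{na_k/2}$. You instead dominate by $\sum_\gamma w_\gamma E_\gamma$ and apply Markov for the upper tail, and use Jensen for the lower tail. Jensen must be taken under the prior $w\otimes\mu$, not the Gibbs measure as you wrote.

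\textbf{What each buys.} Your route is shorter. The paper's route needs nothing about the infinite series beyond summable variances, and it gives explicit moment bounds.

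\textbf{Points to tighten.}
\begin{itemize}
\item Your list of successive limits performs the quadratures only ``at finite feature cutoff.'' Transferring the finite-state bound against the full $D$ also requires the quadrature and leaf limits at $N=\infty$. Say explicitly that the continuity and domination from your first paragraph supply them.
\item The insertions in \eqref{ud:first}--\eqref{ud:remainder} depend on the cutoff through $\dot D$, $D_{i,Y}$ and $D_{i,YY}$. So \eqref{ud:featuretail}, which assumes a cutoff-independent $f$, does not move them by itself. You also need the kernel-derivative tails $\sum_{\ell>N}\tau_\ell^2(1+p_\ell)^2\to0$. After that, pointwise convergence in $t$, the uniform finite-state bounds and dominated convergence suffice.
\item Your claimed uniform convergence ``in the parameters on compacts'' is neither needed nor justified.
\end{itemize}
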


\begin{proof}
For clarity, all Gaussian tensor coefficients defining the feature
fields are realized independently of the weights, rows, and spin
field; their state arguments may depend on the rows. Enumerate the countable cascade leaves and take increasing
finite sets exhausting them, using an enumeration depending only on the
Poisson points. Their total original weight tends to one almost surely.
Take deterministic, finitely supported antipodally symmetric probability
measures on $\mathbb S_n$ converging weakly to $\mu_n$.
At fixed feature cutoff and finitely many leaves, the integrands are
continuous on the sphere, so these quadratures converge samplewise.
After this limit the leaf sums converge by positivity, and their
normalizing total weights converge to one. The limits are finite
almost surely: Gaussian compensation, conditional on the other
disorder, makes the expected perturbed partition equal to the base
partition, whose integrability is verified below. For bounded replica
insertions continuous in the spherical coordinates, the numerators
converge by the same argument, using absolute domination by a constant
times the product partition. The denominators converge to strictly
positive limits at every finite feature cutoff. Thus their normalized
values converge samplewise, and their expectations converge by bounded
convergence.

We give bounds that apply to every approximation. Let $Z_0$ denote
its base partition and $Z_{\rm sp}$ the same partition with only the
spin field in its exponent. Antipodal symmetry and
$-\beta\leq U_t\leq0$ give
\begin{equation}\label{ud:baselogbounds}
 e^{-\beta m}\leq Z_0\leq Z_{\rm sp},\qquad
 \E Z_{\rm sp}=e^{na_k/2}\leq e^{nL/2}.
\end{equation}
For the last identity, condition on the weights and average the spin
Gaussians at a fixed state, whose variance is $na_k$.
It follows that $\log Z_0\geq-\beta m$ and
$\Prob(\log Z_0>r)\leq e^{nL/2-r}$.
In particular these logarithms have uniformly bounded moments of every
fixed order.

Condition now on the weights, rows and spin field, and let $\nu_0$
be the normalized base Gibbs measure of the approximation in question.
For a finite feature cutoff write
\[
 Y_N=\frac{Z_{s,N}}{Z_0}
   =\int e^{sG^{(N)}(\xi)-s^2D^{(N)}(\xi,\xi)/2}\,\nu_0(\dd\xi),
 \qquad \xi=(\sigma,\gamma).
\]
Let $\E_g$ denote expectation over the feature Gaussians only. Since
$|D^{(N)}(\xi,\xi')|\leq T$, compensation and Jensen's inequality give
\begin{equation}\label{ud:partitionmoments}
 \E_gY_N=1,\qquad
 \E_gY_N^2=\iint e^{s^2D^{(N)}(\xi,\xi')}
                  \,\nu_0(\dd\xi)\nu_0(\dd\xi')\leq e^{s^2T}.
\end{equation}
\begin{equation}\label{ud:jensenbound}
 \log Y_N\geq s\int G^{(N)}\,\dd\nu_0-s^2T/2,\qquad
 \Var_g\left(s\int G^{(N)}\,\dd\nu_0\right)\leq s^2T.
\end{equation}
Consequently, for $r>0$,
\[
 \Prob_g(\log Y_N>r)\leq e^{-r},\qquad
 \Prob_g(\log Y_N<-s^2T/2-r)
       \leq e^{-r^2/(2s^2T)}
\]
when $s^2T>0$; at $s=0$ one has $Y_N=1$.
Together with \eqref{ud:baselogbounds}, these bounds prove uniform
integrability, indeed uniform bounds for every fixed moment, of
$\log Z_{s,N}$. They are uniform in time, bounded controls, and both
state-space approximations. Hence the samplewise quadrature and leaf
limits above also pass through expected logarithms.

It remains to construct and identify the infinite-feature limit.
We may now use the full sphere and all leaves in $\nu_0$.
Conditional on this measure and the rows, realize
$G^{(N)}=\sum_{\ell\leq N}\tau_\ell u_\ell g_\ell$ with independent
fields of covariance $K_\ell$.
Then $(Y_N)$ is a martingale for the increasing sigma-fields generated
by these fields, because each new exponential factor has conditional
mean one at each state. Equation~\eqref{ud:partitionmoments} makes it
bounded in $L^2$, so $Y_N\to Y$ almost surely and in $L^2$, with
$\E_gY=1$. This limit cannot vanish: the Gaussian variables
$s\int G^{(N)}\,\dd\nu_0$ have independent increments and total
variance at most $s^2T$, and therefore converge almost surely to a
finite real variable. Their Jensen bounds \eqref{ud:jensenbound}
then imply $Y>0$ almost surely.

For identification of the Gibbs measure, set
$W_N(\xi)=\exp\{sG^{(N)}(\xi)-s^2D^{(N)}(\xi,\xi)/2\}$.
For $M>N$, independence of the omitted fields gives
\[
 \E_g\int|W_M-W_N|^2\,\dd\nu_0
 =\int e^{s^2D^{(N)}(\xi,\xi)}
       \left(e^{s^2(D^{(M)}-D^{(N)})(\xi,\xi)}-1\right)
                   \,\nu_0(\dd\xi)
 \leq e^{s^2T}(e^{s^2\delta_N}-1).
\]
Thus $W_N$ converges in $L^2(\Prob_g\otimes\nu_0)$ to a density $W$.
At $\nu_0$-almost every state the Gaussian series has summable
variances and converges almost surely; consequently $W$ is precisely
the strictly positive compensated exponential of the full field.
Moreover $Y=\int W\,\dd\nu_0$, by $L^1$ convergence. Since $Y>0$,
the normalized measures $W_N\nu_0/Y_N$ converge in total variation
in probability to $W\nu_0/Y$. Bounded replica averages therefore
converge in probability and in expectation. The logarithmic tail
bounds above identify $\E\log Z_s$ with the limit of
$\E\log Z_{s,N}$.

For quantitative control, interpolate between $D^{(N)}$ and
$D^{(M)}$ by a convex combination. Their difference has entrywise
absolute value at most $\delta_N$. The elementary finite-state
identities \eqref{ud:first} and \eqref{ud:observable}, with the base
exponent and insertion fixed, bound the change of expected logarithm
by $s^2\delta_N/2$ and that of an $r$-replica insertion by
$C_rs^2\|f\|_\infty\delta_N$. The quadrature and leaf limits already
proved extend these estimates to the full state space. Letting
$M\to\infty$ gives the asserted bounds, including
\eqref{ud:featuretail}. If the insertion itself depends on the cutoff,
one adds the supremum norm of its difference from its limit.

Finally, the tails of the kernel derivatives in
\eqref{ud:kernelbounds}, after averaging the auxiliary directions,
are bounded by constants times
$\sum_{\ell>N}\tau_\ell^2(1+p_\ell)^2$, which tends to zero.
Products of these kernels have the same convergence because each
factor is uniformly bounded. Therefore every term of the derivative
identity converges pointwise in time under the successive limits.
Its absolute value is bounded on the truncated time interval by the
finite-state bounds in \eqref{ud:statement} and
\eqref{ud:kernelbounds}, uniformly in all approximations, for fixed
$n,m,s$. Dominated convergence passes each integrated identity to
the limit on every subinterval. The same reasoning applies to spin
covariance variations on compact control intervals. This establishes
the asserted integrated identities without requiring uniform
samplewise convergence in time of the infinite feature series.
\end{proof}

\section{The full profile space and the injectivity threshold}
\label{sec:completion}

Theorems~\ref{lo:pressure} and~\ref{up:pressure}, together with the
row-count coupling \eqref{lo:poissontransfer}, prove
Theorem~\ref{main:pressure}. We now extend its functional to all overlap
distributions and pass to zero temperature.

\subsection{Completion of the functional}
Let $\zeta$ be a probability measure on $[0,1]$, and define
\begin{equation}\label{full:entropy}
 x_\zeta(s)=\zeta([0,s]),\qquad
 L_\zeta(s)=\int_s^1x_\zeta(u)\,\dd u,\qquad
 \mathcal E(\zeta)=\frac12\int_0^1
       \left(\frac1{L_\zeta(s)}-\frac1{1-s}\right)\dd s.
\end{equation}
Since $L_\zeta(s)\leq1-s$, the final integral is a nonnegative extended
Lebesgue integral, with $1/0=+\infty$.

There is a unique unit-diagonal cascade array law $\mathscr R_\zeta$
whose off-diagonal marginal is $\zeta$. To construct it, approximate
$\zeta$ weakly by finitely supported measures with support below one
and take a subsequential limit of their finite cascade laws. Exchangeability,
positive semidefiniteness and the Ghirlanda--Guerra identities pass to the
limit, initially for continuous cylinder tests and then for measurable
tests. Proposition~\ref{rpc:representation} identifies every subsequential
limit with the same law. The same argument shows that
\begin{equation}\label{full:arraycontinuity}
 \zeta_j\Rightarrow\zeta\quad\Longrightarrow\quad
 \mathscr R_{\zeta_j}\Rightarrow\mathscr R_\zeta.
\end{equation}

Given an array $R$ with this law, sample a centered Gaussian sequence
$(X^\ell)_{\ell\geq1}$ with conditional covariance $R$. Put
\begin{equation}\label{full:pattern}
 A_L=\frac1L\sum_{\ell=1}^Le^{-\beta\ind\{X^\ell>0\}},\qquad
 A_\infty=L^2\text{-}\lim_{L\to\infty}A_L,\qquad
 \mathcal J_\beta(\zeta)=\E\log A_\infty.
\end{equation}
Exchangeability gives this limit and
\[
 \E\abs{A_L-A_\infty}^2
 =\frac1L\left(\E e^{-2\beta\ind\{X^1>0\}}
       -\E e^{-\beta\ind\{X^1>0\}-\beta\ind\{X^2>0\}}\right)
 \leq\frac1L.
\]
In particular $e^{-\beta}\leq A_\infty\leq1$. Jensen's inequality,
first for the empirical average and then for its expectation, yields
\begin{equation}\label{full:patternbounds}
 -\beta/2\leq\mathcal J_\beta(\zeta)
 \leq\log\frac{1+e^{-\beta}}2.
\end{equation}
Set $\mathcal P_{\alpha,\beta}(\zeta)=
\mathcal E(\zeta)+\alpha\mathcal J_\beta(\zeta)$.

\begin{theorem}\label{full:completion}
For every finite $\alpha\geq0$ and $\beta>0$,
\begin{equation}\label{full:minimum}
 P_*(\alpha,\beta)
 =\min_{\zeta\in\mathcal P([0,1])}\mathcal P_{\alpha,\beta}(\zeta).
\end{equation}
Every minimizer has finite entropy and no atom at one.
\end{theorem}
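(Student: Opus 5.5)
We will prove \eqref{full:minimum} in three steps: consistency on finite profiles, lower semicontinuity and compactness, and a finite-profile approximation of any $\zeta$ with finite entropy.

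\emph{Consistency.} First we check that for a finitely supported $\zeta=\sum_i\pi_i\delta_{q_i}$ with $q_k<1$ the completed functional agrees with \eqref{eq:finite-functional}. For the row term, $\mathscr R_\zeta$ is the finite cascade law \eqref{rpc:unitarray}. The computation leading to \eqref{lo:RPCvalue}, with one row and the marking recursion of Lemma~\ref{rpc:marking}, gives $\E\log A_\infty=\mathcal J_\beta(q,x)$. Here the $L^2$ limit $A_\infty$ is the weighted leaf sum of the residual-integrated terminal marks.

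For the entropy, $x_\zeta$ is a step function and $L_\zeta$ is piecewise linear, with $L_\zeta(s)=1-s$ on $[q_k,1]$ and $L_\zeta(q_i)=\lambda_i$. Integrating $1/L_\zeta$ on each interval $[q_{i-1},q_i]$, where the slope is $-x_i$, gives $x_i^{-1}\log(\lambda_{i-1}/\lambda_i)$. On $[0,q_0]$ the slope is zero, which gives $q_0/\lambda_0$. Subtracting $\int_0^{q_k}(1-s)^{-1}\dd s=-\log v$ yields exactly \eqref{eq:entropy}. Hence $P_*=\inf$ over finitely supported measures with support below one, and this is $\ge\inf_\zeta\mathcal P_{\alpha,\beta}(\zeta)$.

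\emph{Lower semicontinuity and compactness.} The space $\mathcal P([0,1])$ is weakly compact. The row functional $\mathcal J_\beta$ is continuous: by \eqref{full:arraycontinuity}, the moments $\E A_\infty^b$ are bounded Gaussian sign integrals of finitely many covariance entries. As in Lemma~\ref{lo:cavitylimit}, these are continuous even at singular covariances, and $A_\infty\in[e^{-\beta},1]$, so moment convergence gives convergence of $\E\log A_\infty$.

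For the entropy, $\zeta_j\Rightarrow\zeta$ gives $x_{\zeta_j}\to x_\zeta$ at continuity points. By dominated convergence $L_{\zeta_j}(s)\to L_\zeta(s)$ for every $s$. Fatou's lemma applied to the nonnegative integrand then gives $\mathcal E(\zeta)\le\liminf\mathcal E(\zeta_j)$. So $\mathcal P_{\alpha,\beta}$ is lower semicontinuous and attains its minimum.

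Every minimizer has finite entropy, because the finite trial $k=0,q_0=0$ has finite value. It has no atom at one: if $\zeta(\{1\})=c>0$ then $L_\zeta(s)\le(1-c)(1-s)$ near $s=1$, so the integrand is at least $\frac{c}{1-c}(1-s)^{-1}$, which is not integrable. This forces $\mathcal E=\infty$.

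\emph{Approximation (main obstacle).} It remains to show $\mathcal P_{\alpha,\beta}(\zeta)\ge P_*$ whenever $\mathcal E(\zeta)<\infty$. Such a $\zeta$ has no atom at one, but its support may reach one, so we need finite profiles $\zeta_j\Rightarrow\zeta$ with support below one and $\mathcal E(\zeta_j)\to\mathcal E(\zeta)$. Since $\mathcal J_\beta$ is continuous, this suffices.

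We first truncate: fix $r<1$ and push the mass of $(r,1]$ to $r$. This raises $x$ to $1$ on $[r,1]$, so $L=1-s$ there and the integrand vanishes on $[r,1]$. On $[0,r]$ we have $L_{\zeta^r}\ge L_\zeta$ with $L_{\zeta^r}\to L_\zeta$ pointwise as $r\uparrow1$. The integrand $1/L-1/(1-s)$ therefore decreases, and dominated convergence, dominated by the finite entropy of $\zeta$, gives convergence.

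Next we discretize $\zeta^r$ by rounding $x$ upward on a fine grid of $[0,r]$, so that the supports stay below $r$. Then $L$ is bounded below by $1-r$ times a fixed positive factor on $[0,r]$, and $1/L$ converges uniformly. Finally we delete zero-mass levels to obtain strict inequalities and conclude. The delicate point is the monotonicity bookkeeping: moving mass to the right increases $x$ and hence $L$, which lowers the integrand. This is why we use upward rounding and rightward truncation throughout.
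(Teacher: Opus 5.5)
Your proof is correct and follows the paper's route: finite-profile consistency, continuity of $\mathcal J_\beta$, capping by $s\mapsto s\wedge r$, then finite approximation on $[0,r]$, where $L\geq 1-r$. The only differences are that you get lower semicontinuity of $\mathcal E$ directly from Fatou's lemma rather than as the increasing limit of the continuous maps $\zeta\mapsto\mathcal E(\zeta_r)$, and continuity of $\mathcal J_\beta$ from the moments $\E A_\infty^b$ rather than from the uniform bound $\abs{\E\log A_L-\mathcal J_\beta}\leq e^\beta L^{-1/2}$. One slip in your closing remark: moving mass toward zero, not to the right, is what raises $x_\zeta$ and hence $L_\zeta$, and this is what your truncation and upward rounding actually do, so the proof is unaffected.
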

\begin{proof}
We first show that $\mathcal J_\beta$ is continuous in $\zeta$.
The logarithm is $e^\beta$-Lipschitz on $[e^{-\beta},1]$, so
\[
 \abs{\E\log A_L-\mathcal J_\beta(\zeta)}\leq e^\beta L^{-1/2}
\]
uniformly in the overlap law. At fixed $L$, the Gaussian expectation
$\E\log A_L$ is continuous in the covariance matrix. The integrand's
discontinuities lie on coordinate hyperplanes, each of Gaussian measure
zero because the diagonal variances are one, including for singular
covariance matrices. Now use \eqref{full:arraycontinuity} and the uniform
approximation.

For a finite profile, $\zeta=\sum_{i=0}^k\pi_i\delta_{q_i}$ and
\[
 L_\zeta(s)=
 \begin{cases}
 \lambda_0,&0\leq s\leq q_0,\\
 \lambda_i+x_i(q_i-s),&q_{i-1}\leq s\leq q_i,\quad1\leq i\leq k,\\
 1-s,&q_k\leq s<1.
 \end{cases}
\]
Integrating \eqref{full:entropy} gives \eqref{eq:entropy}.
The marking recursion, including the independent residual Gaussians in
\eqref{rpc:residual}, gives
$\mathcal J_\beta(\zeta)=\mathcal J_\beta(q,x)$.

For $r<1$, let $\zeta_r$ be the image of $\zeta$ under $s\mapsto s\wedge r$.
Its distribution function equals $x_\zeta$ below $r$ and equals one above
$r$. Consequently $L_{\zeta_r}\geq L_\zeta$ and
$L_{\zeta_r}(s)\downarrow L_\zeta(s)$ as $r\uparrow1$. Monotone
convergence and continuity of the pattern term imply
\begin{equation}\label{full:capping}
 \mathcal E(\zeta_r)\uparrow\mathcal E(\zeta),\qquad
 \mathcal J_\beta(\zeta_r)\longrightarrow\mathcal J_\beta(\zeta).
\end{equation}
For fixed $r$, approximate $\zeta_r$ by finite measures on $[0,r]$.
The denominators in \eqref{full:entropy} are at least $1-r$ below $r$,
and the integrand is zero above $r$. Dominated convergence proves
continuity of the entropy on this capped domain. Combining the two
approximations gives finite profiles whose functional values approach
that of every finite-entropy $\zeta$. Infinite entropy cannot be canceled
by \eqref{full:patternbounds}. This proves equality of the infima.

Each map $\zeta\mapsto\mathcal E(\zeta_r)$ is continuous, since capping
preserves weak convergence. Thus \eqref{full:capping} makes
$\mathcal E$ lower semicontinuous. Compactness of $\mathcal P([0,1])$
and bounded continuity of $\mathcal J_\beta$ give a minimizer. The trial
$\delta_0$ has finite value, so every minimizer has finite entropy.
An atom of mass $a>0$ at one would give
$L_\zeta(s)\leq(1-a)(1-s)$ and hence infinite entropy.
\end{proof}

Finite entropy does not require support bounded away from one. For example,
the uniform distribution on $[0,1]$ satisfies
$L_\zeta(s)=(1-s^2)/2$ and $\mathcal E(\zeta)=(\log2)/2$.

\subsection{Relation to the Parisi equation}
Let $q$ be a nondecreasing quantile of $\zeta$, with essential endpoint
values. The quantile identity
\begin{equation}\label{full:quantile}
 L_\zeta(q(x))=1-xq(x)-\int_x^1q(y)\,\dd y
\end{equation}
identifies $L_\zeta(q(x))$ with the function $\lambda(x)$ in
\cite[equation (48)]{Maillard2025Injectivity}. For a smooth profile
with $q(1)<1$, a change of variables in \eqref{full:entropy} gives
\[
 \mathcal E(\zeta)=\frac12\log(1-q(1))
   +\frac{q(0)}{2\lambda(0)}
   +\frac12\int_0^1\frac{q'(x)}{\lambda(x)}\,\dd x.
\]
A jump at mass $x_i$ instead contributes
$\frac1{2x_i}\log(\lambda_{i-1}/\lambda_i)$, as in
\eqref{eq:entropy}.

For the pattern term, the transformation
$f\mapsto x^{-1}\log\E e^{xf(h+\sqrt tG)}$ is the nonlinear heat flow
with generator $(f_{hh}+xf_h^2)/2$. In the variance coordinate $s$, its
coefficient is $x_\zeta(s)$. On a fixed capped interval $[0,r]$ the
terminal convolution is Lipschitz in $h$. The flow preserves its
Lipschitz constant $L$. For two mass functions $a,b$, subtracting the
equations gives a linear parabolic equation for their difference, with
a bounded transport coefficient and forcing bounded by
$L^2\abs{a-b}/2$. The comparison principle therefore bounds the
supremum norm of the solution difference by
\[
 \frac{L^2}{2}\int_0^r\abs{a(s)-b(s)}\,\dd s.
\]
Approximation by step functions identifies the continuous cascade
functional with the Parisi equation of
\cite[equation (49)]{Maillard2025Injectivity} on its regular domain.
Equation~\eqref{full:capping} defines the extension at top overlap one
without subtracting divergent entropy terms.

\subsection{The energy-one crossing}
\begin{proof}[Proof of Theorem~\ref{main:threshold}]
For each matrix, convexity of $\beta\mapsto\log Z_{n,m}(\beta)$ and
$\log Z_{n,m}(0)=0$ imply that
$T_{n,m}(\beta)=-\log Z_{n,m}(\beta)/(\beta n)$ is nonincreasing in
$\beta>0$. We have $0\leq T_{n,m}(\beta)\leq m/(2n)$.
Theorem~\ref{main:pressure} therefore gives a nonincreasing function
\[
 T(\alpha,\beta)=-P_*(\alpha,\beta)/\beta\in[0,\alpha/2]
\]
and establishes existence of \eqref{eq:limiting-energy}.

By Theorem~\ref{full:completion},
\[
 T(\alpha,\beta)=\sup_{\mathcal E(\zeta)<\infty}
  \left\{-\frac{\mathcal E(\zeta)}\beta
                  -\frac\alpha\beta\mathcal J_\beta(\zeta)\right\}.
\]
The slopes of these affine functions of $\alpha$ belong to $[0,1/2]$,
by \eqref{full:patternbounds}. Also $\mathcal E\geq0$ and
$\mathcal E(\delta_0)=0$. Hence $T(0,\beta)=0$, and $T(\cdot,\beta)$
is convex, nondecreasing and $1/2$-Lipschitz. Its finite pointwise limit
$e$ has the same properties and satisfies $e(0)=0$.

For a sequence $m_n/n\to\alpha>1$, choose $1<a<\alpha<A$ with $A\geq2$
and apply Corollary~\ref{geo:bridge}. At fixed $\beta$, the lower inequality in
\eqref{eq:bridge} gives
\[
 \limsup_n\frac{h(m_n,n)}n\leq T(\alpha,\beta).
\]
The upper inequality gives the reverse bound for the liminf, up to
$\delta+C_{a,A,\delta}/\beta$. Sending $\beta\to\infty$ and then
$\delta\downarrow0$ proves
\begin{equation}\label{full:energy-limit}
 \frac{h(m_n,n)}n\longrightarrow e(\alpha),\qquad
 E_{n,m_n}\longrightarrow e(\alpha)\quad\text{in probability},
 \qquad \alpha>1.
\end{equation}
The probability convergence follows from \eqref{eq:count-concentration}.

At density $3/2$, the chamber count \eqref{eq:chamber-count} gives a
strictly negative direction with probability tending to one. Since
$0\leq E_{n,m}\leq m/(2n)$, the mean minimum tends to zero, and the
preceding limit yields $e(3/2)=0$. Nonnegativity and monotonicity give
$e(\alpha)=0$ for $0\leq\alpha\leq1$. For a sequence tending to any such
$\alpha$, row comparison bounds its mean minimum by that at
$\lfloor3n/2\rfloor$ rows. Its normalized minimum therefore tends to zero
in mean and in probability. Thus \eqref{full:energy-limit} holds for every
finite $\alpha\geq0$. The variables $E_{n,m_n}$ are uniformly bounded
along each ratio sequence, so the convergence is also in $L^1$.

Lemma~\ref{geo:anchors} gives $e(4)\leq1$. At density $16$, injectivity
holds with probability tending to one, and $E_{n,m}\geq1$ on that event.
Thus $e(16)\geq1$. Continuity gives a root $\alpha_\star\in[4,16]$.
Convexity and $e(0)=0$ imply that $e(\alpha)/\alpha$ is nondecreasing
on $(0,\infty)$. Consequently
\begin{equation}\label{full:strict-crossing}
 e(\alpha)\leq\alpha/\alpha_\star<1\quad(\alpha<\alpha_\star),
 \qquad
 e(\alpha)\geq\alpha/\alpha_\star>1\quad(\alpha>\alpha_\star).
\end{equation}
The root is therefore unique on $[0,\infty)$.

For $m_n/n\to\alpha\ne\alpha_\star$, set
$d=\abs{\alpha/\alpha_\star-1}>0$.
Equations~\eqref{full:energy-limit} and~\eqref{full:strict-crossing}
put $h(m_n,n)/n$ on the corresponding side of one at distance at least
$d/2$, for all sufficiently large $n$.  For positive $m_n$,
\eqref{eq:count-concentration} therefore bounds the wrong-phase
probability by
\[
 2\exp\left(-\frac{n^2d^2}{2m_n}\right).
\]
If $m_n=0$, the map is identically zero. Lemma~\ref{geo:criterion}
now proves both phases.
\end{proof}

\begin{corollary}\label{full:limsup-phases}
The noninjective conclusion holds if
$\limsup_n m_n/n<\alpha_\star$, and the injective conclusion holds if
$\liminf_n m_n/n>\alpha_\star$, including when $m_n/n\to\infty$.
\end{corollary}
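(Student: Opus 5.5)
The plan is to reduce both claims to deterministic ratio sequences by monotone coupling in the number of rows, and then apply Theorem~\ref{main:threshold}. Couple all row counts through one infinite sequence of Gaussian rows, so that $W$ with $m$ rows is the top $m\times n$ block. Adding rows can only increase $\min H_W$. So on this coupling the event $\{E_{n,m}\geq1\}$ is nondecreasing in $m$ for fixed $n$. By Lemma~\ref{geo:criterion}, $p_{m,n}$ is then nondecreasing in $m$, since $p_{m,n}=\Prob\{E_{n,m}\geq1\}$ and the coupling does not change marginal laws.

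\emph{Noninjective case.} Suppose $\limsup_n m_n/n<\alpha_\star$, and choose $\alpha'$ with $\limsup m_n/n<\alpha'<\alpha_\star$. Set $m_n'=\lfloor\alpha' n\rfloor$. For large $n$ we have $m_n\leq m_n'$, so monotonicity gives $p_{m_n,n}\leq p_{m_n',n}$. Theorem~\ref{main:threshold}, applied to the deterministic sequence $m'_n/n\to\alpha'\ne\alpha_\star$, gives $p_{m'_n,n}\to0$. Hence $p_{m_n,n}\to0$. The finitely many initial indices are irrelevant.

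\emph{Injective case.} Suppose $\liminf_n m_n/n>\alpha_\star$, allowing $m_n/n\to\infty$. Pick $\alpha''$ with $\alpha_\star<\alpha''<\liminf m_n/n$, and set $m''_n=\lceil\alpha'' n\rceil$. Eventually $m_n\geq m''_n$, so $p_{m_n,n}\geq p_{m''_n,n}\to1$ by Theorem~\ref{main:threshold}. No upper bound on $m_n$ is used, which covers the unbounded-ratio case.

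The only point needing care is the justification of monotonicity of $p_{m,n}$ in $m$. I would argue it directly from the equivalence in Lemma~\ref{geo:criterion}: it holds almost surely for each fixed $m$, so countably many null sets can be discarded simultaneously. Alternatively, one can observe it at the level of maps. If $f_W$ is injective, then so is the map with extra output coordinates, because the original coordinates already separate points. This second observation avoids the count criterion entirely and is the route I would actually write.
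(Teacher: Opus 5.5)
Your proof is correct and matches the paper's argument: couple the row counts, use that injectivity survives adding output coordinates, and compare $m_n$ with a deterministic sequence at a fixed intermediate density $\alpha'\ne\alpha_\star$, where Theorem~\ref{main:threshold} applies. Your map-level justification of monotonicity, and the floor/ceiling choices that make $m_n\leq m_n'$ and $m_n\geq m_n''$ hold eventually, spell out exactly what the paper's one-line proof leaves implicit.
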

\begin{proof}
Compare the row count with a fixed intermediate density and use monotonicity
of injectivity under adding rows.
\end{proof}

\section{Finite-family limits and the rational certificate}\label{app:trials}

\subsection{The finite-family limit}
\begin{proof}[Proof of Proposition~\ref{num:family}]
For sufficiently large $\beta$, use the admissible finite-temperature
profile
\[
 q_i(\beta)=q_i\quad(i<k),\qquad
 q_k(\beta)=1-\chi/\beta,\qquad x_i(\beta)=m_i/\beta.
\]
The standard Gaussian tail bounds give, locally uniformly in $h$,
\[
 \frac1\beta\log\bigl[
  \Phi_{\rm G}(-h\sqrt{\beta/\chi})
       +e^{-\beta}\Phi_{\rm G}(h\sqrt{\beta/\chi})\bigr]
       \longrightarrow-\psi_\chi(h),
\]
where $\Phi_{\rm G}$ is the standard normal distribution function.
Indeed, for $t\geq0$, its negative tail lies between
$c e^{-t^2/2}/(1+t)$ and $e^{-t^2/2}/2$ for a fixed $c>0$.
On each compact interval the resulting logarithmic error is
$O(\log\beta/\beta)$. All rescaled row functions take values in $[-1,0]$.
Couple the Gaussian increments as their variances converge. Bounded
convergence in each recursion, followed by the root expectation, gives
$\mathcal J_\beta/\beta\to J_0$; local uniform convergence is preserved
by the recursions, using bounded tails. Also $\beta\lambda_i\to\ell_i$,
and the entropy formula gives $\mathcal E/\beta\to E_0$.
These arguments include $k=0$, where the root variance tends to one.

For every sufficiently large $\beta$,
$P_*(\alpha,\beta)\leq\mathcal P_{\alpha,\beta}(q(\beta),x(\beta))$.
Taking $-1/\beta$ times this inequality and passing to the limit proves
$B_\alpha\leq e(\alpha)$. The Gaussian recursion has $J_0<0$:
the terminal cost is positive on a set of positive Gaussian measure,
and this strict inequality persists through every recursion.
Thus $B_C=1$, so $e(C)\geq1$ and the unique-crossing theorem gives
$\alpha_\star\leq C$.
\end{proof}

\subsection{The one-step certificate}
\begin{proof}[Proof of Corollary~\ref{num:seven}]
Take $k=1$, $q=q_0=39/50$, $\chi=1/20$ and $m=m_1=16/5$.
Write $s=\sqrt{1-q}$, $d=\chi+ms^2$, $a=\sqrt{2\chi}$,
$\mu=\chi h/d$ and $\tau=\sqrt{\chi s^2/d}$.
Completing the square in the interval $0<h+sG<a$ gives
\begin{align*}
 A(h)&=\E e^{-m\psi_\chi(h+sG)}\\
 &=\Phi_{\rm G}(-h/s)
   +\sqrt{\chi/d}\,e^{-mh^2/(2d)}
     \left[\Phi_{\rm G}((a-\mu)/\tau)-\Phi_{\rm G}(-\mu/\tau)\right]
   +e^{-m}\Phi_{\rm G}((h-a)/s).
\end{align*}
Set $F(z)=-\log A(\sqrt q\,z)/m$. Since $\psi_\chi$ is nondecreasing,
$F$ is nondecreasing and $0\leq F\leq1$. With $\varphi_{\rm G}$ the
standard normal density, the trial value is
\[
 B_7=7\int_\R F(z)\varphi_{\rm G}(z)\,\dd z
       -\frac{195}{377}-\frac5{32}\log\frac{377}{25}.
\]
For $z_j=-4+j/50$, $0\leq j\leq400$, and
$p_j=\Phi_{\rm G}(z_{j+1})-\Phi_{\rm G}(z_j)$, monotonicity gives
\begin{align*}
 \sum_{j=0}^{399}F(z_j)p_j+F(4)\Phi_{\rm G}(-4)
 &\leq\int_\R F\varphi_{\rm G}\\
 &\leq\sum_{j=0}^{399}F(z_{j+1})p_j
       +(F(-4)+1)\Phi_{\rm G}(-4).
\end{align*}
Outward rational evaluation of these sums gives
\begin{equation}\label{num:certificate}
 \frac{1060394076597313061}{10^{18}}
     \leq B_7\leq
 \frac{1102089488307234572}{10^{18}}.
\end{equation}
For completeness, the ancillary checker uses integer-square-root brackets,
Machin's formula for $\pi$, the alternating series for the error function
$\operatorname{erf}(x)=2\pi^{-1/2}\int_0^x e^{-t^2}\,\dd t$
after its terms decrease, and range-reduced exponential and logarithm
series with geometric tail bounds. Gaussian arguments outside $[-4,4]$
are enclosed by symmetry and the Mills bound
$\Phi_{\rm G}(-4)\leq e^{-8}/(4\sqrt{2\pi})$.
Every arithmetic operation rounds its lower endpoint down and its upper
endpoint up to multiples of $10^{-18}$. The checker \path{anc/verify_one_step.py} and its rational parameter file
\path{anc/one_step_parameters.json} are included in the source package;
Section~\ref{sec:numerics} gives the execution command. Thus
\eqref{num:certificate} includes arithmetic, Gaussian-tail and integration
errors. Its lower endpoint exceeds one. Proposition~\ref{num:family}
gives $e(7)>1$, hence $\alpha_\star<7$.
\end{proof}

\section{AI assistance}
The large language model GPT-6.1 Sol was used to assist with editing, proof
checking, and verification. The author takes full responsibility for the
content of this paper.
\bibliographystyle{amsplain}
\bibliography{references}
\end{document}